\numberwithin{equation}{section}
\newtheorem{theorem}{Theorem}[section]
\newtheorem{lemma}[theorem]{Lemma}
\newtheorem{question}[theorem]{Question}
\newtheorem{corollary}[theorem]{Corollary}
\newtheorem{fact}[theorem]{Fact}
\newtheorem*{x}{Theorem \ref{t:main}}
\newtheorem*{y}{Corollary \ref{c:main}}
\newtheorem*{z}{Theorem \ref{t:Kg}}
\theoremstyle{definition}
\newtheorem{remark}[theorem]{Remark}
\newtheorem{definition}[theorem]{Definition}
\newtheorem{notation}[theorem]{Notation}
\DeclareMathOperator{\pr}{pr}
\DeclareMathOperator{\diam}{diam}
\DeclareMathOperator{\dist}{dist}
\DeclareMathOperator{\inter}{int}
\DeclareMathOperator{\mesh}{mesh}
\DeclareMathOperator{\cl}{cl}
\DeclareMathOperator{\supp}{supp}
\DeclareMathOperator{\St}{St}
\title[Inductive topological Hausdorff dimensions and fibers]{Inductive topological Hausdorff dimensions and fibers of generic continuous functions}
\author{Rich\'ard Balka}
\address{Department of Mathematics, University of Washington, Box 354350, Seattle, WA 98195-4350, USA}
\email{balka@math.washington.edu}
\address{Alfr\'ed R\'enyi Institute of Mathematics, Hungarian Academy of Sciences, PO Box 127, 1364 Budapest, Hungary}
\email{balka.richard@renyi.mta.hu}
\thanks{Supported by the Hungarian Scientific Research Fund grants no.~72655 and 104178.}
\subjclass[2010]{Primary: 28A78, 54F45; Secondary: 26B99, 28A80, 46E15.}
\keywords{Hausdorff dimension, topological dimension, level sets, fibers, generic, typical continuous functions, fractals}
\date{}
\begin{document}

\begin{abstract} In an earlier paper Buczolich, Elekes and the author
introduced a new concept of dimension for metric spaces, the so called topological Hausdorff dimension. They
proved that it is precisely the right notion to describe the Hausdorff dimension of
the level sets of the generic real-valued continuous function (in the sense of Baire
category) defined on a compact metric space $K$.

The goal of this paper is to determine the Hausdorff dimension of the
fibers of the generic continuous function from $K$ to $\mathbb{R}^n$.
In order to do so, we define the $n$th inductive topological
Hausdorff dimension, $\dim_{t^nH} K$.
Let $\dim_H K$, $\dim_t K$ and $C_n(K)$ denote the Hausdorff and topological dimension of $K$ and the Banach
space of the continuous functions from $K$ to $\mathbb{R}^n$. We show that
$\sup_{y\in \mathbb{R}^n} \dim_{H}f^{-1}(y) = \dim_{t^nH} K -n$ for the generic $f \in C_n(K)$,
provided that $\dim_t K\geq n$, otherwise every fiber is finite.

In order to prove the above theorem we give some equivalent definitions for the inductive topological Hausdorff dimensions,
which can be interesting in their own right. Here we use techniques coming from the theory of topological dimension.

We show that the supremum is actually attained
on the left hand side of the above equation.

We characterize those
compact metric spaces $K$ for which $\dim_{H} f^{-1}(y)=\dim_{t^nH}K-n$ for the generic $f\in C_n(K)$ and the
generic $y\in f(K)$. We also
generalize a result of Kirchheim by showing that if $K$ is
self-similar and $\dim_t K\geq n$ then $\dim_{H} f^{-1}(y)=\dim_{t^nH}K-n$ for the generic
$f\in C_n(K)$ for every $y\in \inter f(K)$.
\end{abstract}

\maketitle

\section{Introduction}

The Hausdorff dimension of a metric
space $X$ is denoted by $\dim_{H} X$, see e.g.~\cite{F} or
\cite{Ma}. In this paper we adopt the convention that
$\dim_{H}\emptyset = -1$.

\bigskip

The following theorem is due to Kirchheim \cite{BK}.

\begin{theorem}[Kirchheim] \label{t:Kirchheim} Let $m,n\in \mathbb{N}^+$, $m\geq n$. For the generic continuous function
$f\colon [0,1]^m\to \mathbb{R}^n$ (in the sense of Baire category) for all $y\in \inter f\left([0,1]^m\right)$
$$\dim_H f^{-1}(y)=m-n.$$
\end{theorem}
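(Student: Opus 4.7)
The plan is to establish the lower and upper bounds separately; only the upper one will require genericity. The lower bound $\dim_H f^{-1}(y) \geq m - n$ holds for \emph{every} continuous $f$ and every $y \in \inter f([0,1]^m)$, and it comes from classical topological dimension theory: an essentiality/separation argument (a form of the generalized intermediate value theorem) shows $\dim_t f^{-1}(y) \geq m - n$ whenever $y$ is an interior value of $f$, and the universal inequality $\dim_H \geq \dim_t$ then gives the Hausdorff-dimension bound.

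For the upper bound I would argue that
\[ R = \bigl\{ f \in C_n([0,1]^m) : \dim_H f^{-1}(y) \leq m - n \text{ for every } y \in \mathbb{R}^n \bigr\} \]
is residual in $C_n([0,1]^m)$. Writing $R = \bigcap_{s, k} U_{s,k}$ over rationals $s > m - n$ and positive integers $k$, where
\[ U_{s,k} = \bigl\{ f : \forall y \in \mathbb{R}^n,\ f^{-1}(y) \text{ has a finite cover by sets of } \diam \leq 1/k \text{ with } \textstyle\sum \diam^s < 1/k \bigr\}, \]
one reduces the task to showing each $U_{s,k}$ is open and dense.

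Density of $U_{s,k}$ comes from approximation by piecewise-linear maps in general position: fix a fine triangulation $\mathcal{T}$ of $[0,1]^m$ and uniformly approximate $f$ by a PL map $f'$ whose restriction to every $m$-simplex of $\mathcal{T}$ is an affine map of rank $n$ onto its image. Such $f'$ form a dense subset of $C_n([0,1]^m)$, and every fiber $f'^{-1}(y)$ is then a finite union of polytopes of dimension at most $m - n$, hence has vanishing $s$-dimensional Hausdorff measure for $s > m - n$, placing $f'$ into $U_{s,k}$. Openness rests on upper semi-continuity of fibers: if $f_j \to f$ uniformly and $y_j \to y$, every limit point of $f_j^{-1}(y_j)$ lies in $f^{-1}(y)$. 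Given $f \in U_{s,k}$, I would fatten each cover of $f^{-1}(y)$ to an open neighborhood and combine it with a compactness argument over $f([0,1]^m)$ to extract a single perturbation tolerance $\delta > 0$ transferring the covers to every fiber of every $f'$ with $\|f' - f\|_\infty < \delta$. This final uniformity step is the main technical obstacle: the covers depend delicately on $y$, and one must quantify the semi-continuity uniformly over all $y \in f([0,1]^m)$.
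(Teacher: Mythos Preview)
Your lower-bound argument has a genuine error: it is \emph{not} true that $\dim_t f^{-1}(y)\geq m-n$ holds for every continuous $f\colon[0,1]^m\to\mathbb{R}^n$ and every interior value $y$. The claim is correct when $n=1$ (an interior level set separates $[0,1]^m$, and no closed set of dimension at most $m-2$ disconnects $[0,1]^m$), but it fails for $n\geq 2$. For a counterexample with $m=3$, $n=2$, pick a continuous surjection $h\colon S^{2}\to S^{1}$, say $h(x_1,x_2,x_3)=(\cos\pi x_3,\sin\pi x_3)$, and define $f$ on the closed unit ball $B^{3}\subset\mathbb{R}^3$ by $f(v)=|v|\,h(v/|v|)$ for $v\neq 0$ and $f(0)=0$. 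Then $f(B^{3})$ is the closed unit disc, so $0$ is an interior value, yet $f^{-1}(0)=\{0\}$ has topological dimension $0<1=m-n$. The essentiality argument you allude to gives the conclusion only when $y$ is a \emph{stable} value of $f$, and interior values need not be stable.

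In the paper the theorem is cited from Kirchheim and is recovered here as the special case $K=[0,1]^m$ of Theorem~\ref{t:Kg}; there the lower bound is itself a genericity statement. Theorem~\ref{t:main}(ii) first produces, for the generic $f$, \emph{some} ball $U_{f,d}\subseteq\mathbb{R}^n$ over which all fibres have Hausdorff dimension at least $d-n$, and the proof of Theorem~\ref{t:Kg} then uses the weak self-similarity of $[0,1]^m$ together with a Baire-category transfer (the sets $\mathcal{D}_m$ of Lemma~\ref{l:D}) to propagate this to every $y\in\inter f(K)$. Thus genericity is essential for the lower bound as well when $n\geq 2$, and there is no ``every $f$'' shortcut. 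Your upper-bound sketch via PL approximation is reasonable and close in spirit to Kirchheim's original route, though as you yourself note the openness of $U_{s,k}$ still needs a uniform compactness argument that you have not supplied.
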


Buczolich, Elekes and the author introduced in \cite{BBE}
a new dimension for metric spaces, the topological Hausdorff dimension. The main motivation behind this concept was
to generalize Kirchheim's theorem for real-valued functions
defined on arbitrary compact metric spaces.
We recall first the definition of the (small inductive) topological dimension.

\begin{definition} Set $\dim _{t} \emptyset = -1$. The \emph{topological dimension}
of a non-empty metric space $X$ is defined by induction as
$$
\dim_{t} X=\inf\{d: X \textrm{ has a basis } \mathcal{U}  \textrm{ such that }
\dim_{t} \partial {U} \leq d-1 \textrm{ for every } U\in \mathcal{U} \}.
$$
\end{definition}

For more information on this concept see \cite{E} or \cite{HW}.
The topological Hausdorff dimension (introduced in \cite{BBE}) is defined analogously to the topological
dimension. However, it is not inductive, and it can attain
non-integer values as well.

\begin{definition}\label{deftoph}
Set $\dim _{tH} \emptyset=-1$. The \emph{topological Hausdorff
dimension} of a non-empty metric space $X$ is defined as
\[
\dim_{tH} X=\inf\{d:
 X \textrm{ has a basis } \mathcal{U} \textrm{ such that } \dim_{H} \partial {U} \leq d-1 \textrm{ for every }
 U\in \mathcal{U} \}.
\]
\end{definition}

(Both notions of dimension can attain the value $\infty$ as well, actually we
use the convention $\infty - 1 = \infty$, hence $d = \infty$ is a member of the
above set.)

\bigskip

For more information on topological Hausdorff dimension see \cite{BBE},
here we mention only the results concerning level sets of generic continuous functions.

Let $K$ be a compact metric space, $n$ be a positive integer and let $C_n(K)$ denote the space
of continuous functions from $K$ to $\mathbb{R}^n$ equipped with the supremum norm.
Since this is a complete metric space, we can use Baire category
arguments.

If $\dim_{t}K=0$ then the generic $f\in C_1(K)$ is
well-known to be one-to-one, so every non-empty
level set is a singleton.

Assume $\dim_{t}K>0$. The following theorem from \cite{BBE} shows the
connection between the topological Hausdorff dimension and the level
sets of the generic $f\in C_1(K)$.

\begin{theorem}[Balka, Buczolich, Elekes] \label{t:old}
If $K$ is a compact metric space with $\dim_{t}K>0$ then for the
generic $f\in C_1(K)$
\begin{enumerate}[(i)]
\item $\dim_{H} f^{-1} (y)\leq \dim_{tH} K-1$ for every $y\in \mathbb{R}$,
\item for every $d<\dim_{tH} K$ there exists a non-degenerate interval $I_{f,d}$
such that $\dim_{H} f^{-1} (y)\geq d- 1$ for every $y\in I_{f,d}$.
\end{enumerate}
\end{theorem}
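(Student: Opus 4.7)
Both parts are Baire-category statements on the Polish space $C_1(K)$; the plan is to produce a dense $G_\delta$ subset on which each inequality holds. Part (i) exploits a basis of $K$ with small-Hausdorff-dimension boundaries witnessing $\dim_{tH}K$. Part (ii) requires a more delicate Baire construction producing thick level sets on an entire interval.

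For (i), fix rational $d>\dim_{tH}K$; it suffices to show $\dim_H f^{-1}(y)\le d-1$ for the generic $f$ and every $y$. By definition, $K$ has a countable basis $\mathcal{U}=\{U_n\}$ with $\dim_H\partial U_n\le d-1$, so the union $B=\bigcup_n\partial U_n$ has $\dim_H B\le d-1$ by countable stability. For each scale $\varepsilon>0$ take a finite subcover $\mathcal{V}_\varepsilon\subseteq\mathcal{U}$ of $K$ by basis elements of diameter less than $\varepsilon$. A cell-by-cell perturbation argument produces a dense $G_\delta$ subset of $C_1(K)$ on which, for each $U\in\bigcup_\varepsilon\mathcal{V}_\varepsilon$, the countable set of ``interior-extremum'' levels of $f|_{\overline{U}}$ is controlled, so that every level $y$ outside this countable exception set is attained on $\partial U$ whenever it meets $\overline{U}$. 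Passing to the limit $\varepsilon\to 0$ and combining the cellwise estimates via countable stability of $\dim_H$ applied to $B$, one obtains $\dim_H f^{-1}(y)\le d-1$ for every $y$; letting $d\searrow\dim_{tH}K$ completes (i).

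For (ii), fix $d<\dim_{tH}K$. A preliminary localization lemma, extracted from the definition, yields $x_0\in K$ and $\varepsilon_0>0$ such that every open $U$ with $x_0\in U\subseteq B(x_0,\varepsilon_0)$ has $\dim_H\partial U\ge d-1$ -- otherwise $\{U\text{ open}:\dim_H\partial U\le d-1\}$ would be a basis of $K$. The plan is then to exhibit a dense $G_\delta$ set of $f$, each further perturbed near $x_0$ so that $f(x_0)<\min_{\partial B(x_0,\varepsilon_0)}f$, for which the interval $I_{f,d}=(f(x_0),\min_{\partial B(x_0,\varepsilon_0)}f)$ is non-degenerate and, for every $y\in I_{f,d}$, the connected component $W_y$ of $x_0$ in $\{f<y\}$ satisfies $\overline{W_y}\subseteq B(x_0,\varepsilon_0)$; then $\partial W_y\subseteq f^{-1}(y)$ and the localization lemma gives $\dim_H f^{-1}(y)\ge\dim_H\partial W_y\ge d-1$.

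The main obstacle is part (ii): producing a \emph{uniform} lower bound valid for every $y$ in an entire interval, while Hausdorff dimension is not preserved under uniform limits of functions. The dense $G_\delta$ condition must explicitly encode the existence of thick-boundary sublevel sets at a countable dense subfamily of levels (e.g.\ dyadic), with a continuity or stability argument transferring the bound to all $y$ in $I_{f,d}$. Part (i), once the right generic property relative to the fixed basis has been isolated, reduces to a multi-scale covering combined with countable bookkeeping of exceptional extremum levels, and is the more routine half.
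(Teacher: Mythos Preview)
Your plan has genuine gaps in both halves, and the paper (which proves this as the $n=1$ case of its Main Theorem) proceeds quite differently.

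\textbf{Part (i).} You correctly identify the set $B=\bigcup_n\partial U_n$ with $\dim_H B\le d-1$, but the ``interior-extremum'' argument does not deliver what you need. Knowing that $f^{-1}(y)$ \emph{meets} $\partial U$ for every small cell $U$ it enters does not bound $\dim_H f^{-1}(y)$: it only shows $f^{-1}(y)\subseteq \cl\bigl(f^{-1}(y)\cap B\bigr)$, and Hausdorff dimension can jump under closure (think of the rationals in $[0,1]$). The missing idea is that the basis witnessing $\dim_{tH}K\le d$ simultaneously witnesses $\dim_t(K\setminus B)\le 0$. The paper exploits this directly: write the $F_\sigma$ set $K\setminus B$ as an increasing union of compact zero-dimensional sets $K_i$; on each $K_i$ the generic continuous real-valued map is one-to-one (the $n=1$ case of Hurewicz's theorem), and by the restriction lemma this lifts to a co-meager set in $C_1(K)$. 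Hence for generic $f$ the set $f^{-1}(y)\setminus B$ is countable, and $\dim_H f^{-1}(y)\le\dim_H B\le d-1$ for every $y$.

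\textbf{Part (ii).} The connected-component argument breaks down because a general compact metric space with $\dim_t K>0$ need not be locally connected; the component $W_y$ of $x_0$ in $\{f<y\}$ is then not open, so the localization lemma does not apply to it, and even when it is open one does not have $\partial W_y\subseteq f^{-1}(y)$ (points of other components of $\{f<y\}$ may accumulate on $\partial W_y$). The paper avoids this entirely by an indirect route: it introduces the class $\mathcal N_1(d)$ of \emph{$d$-level narrow} functions (those $f$ admitting a dense $S\subseteq\mathbb R$ with $\dim_H f^{-1}(y)\le d-1$ for all $y\in S$) and proves, via a chain of equivalent characterizations of $\dim_{tH}K$ through decompositions, partitions, and coverings, that $\mathcal N_1(d)$ is nowhere dense whenever $d<\dim_{tH}K$. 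Any $f\notin\mathcal N_1(d)$ then automatically has a nondegenerate interval $I_{f,d}$ on which every fiber has dimension at least $d-1$. The hard work is showing that ``$\mathcal N_1(d)$ somewhere dense'' forces $d\ge\dim_{tH}K$; this is done by extracting, from a narrow function, partitions between arbitrary pairs of disjoint closed sets with small-dimensional intersection (Lemma~\ref{l:wnsn}), which feeds back into the partition characterization of $\dim_{tH}$.
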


\begin{corollary} \label{c:sup}
If $K$ is a compact metric space with $\dim_t K > 0$ then for the
generic $f \in C_1(K)$
$$\sup_{y\in \mathbb{R}} \dim_{H}f^{-1}(y) = \dim_{tH} K - 1.$$
\end{corollary}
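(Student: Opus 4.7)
The plan is to deduce Corollary \ref{c:sup} as an essentially immediate consequence of Theorem \ref{t:old}, with no new machinery. Since Theorem \ref{t:old} asserts that clauses (i) and (ii) hold simultaneously for the generic $f\in C_1(K)$ (the set of $f$ satisfying both is the intersection of two comeager sets, hence comeager), I would fix such an $f$ and prove the two inequalities of the claimed equality separately.

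For the upper bound $\sup_{y\in \mathbb{R}} \dim_H f^{-1}(y) \leq \dim_{tH} K - 1$, I would just take the supremum over $y\in\mathbb{R}$ of the pointwise inequality in clause (i). This step is trivial.

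For the lower bound $\sup_{y\in \mathbb{R}} \dim_H f^{-1}(y) \geq \dim_{tH} K - 1$, I would invoke clause (ii). Fix any $d < \dim_{tH} K$. Then the non-degenerate interval $I_{f,d}$ is in particular non-empty, so choosing any $y_0\in I_{f,d}$ gives $\dim_H f^{-1}(y_0)\geq d-1$, whence $\sup_{y\in\mathbb{R}}\dim_H f^{-1}(y)\geq d-1$. Letting $d\nearrow \dim_{tH}K$ produces the required bound.

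There is essentially no obstacle; the only mild subtlety is the case $\dim_{tH} K = \infty$. Using the paper's convention $\infty - 1 = \infty$, the upper bound from (i) is vacuous, while clause (ii) supplies a non-degenerate interval $I_{f,d}$ for every finite $d$, forcing $\sup_y \dim_H f^{-1}(y)$ to exceed every real number and hence equal $\infty$, matching the right-hand side.
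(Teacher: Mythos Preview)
Your argument is correct and is exactly the intended one: the paper states Corollary~\ref{c:sup} without proof, treating it as an immediate consequence of Theorem~\ref{t:old}, and your derivation from clauses (i) and (ii) is precisely that immediate deduction.
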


The following definition is due to Darji and Elekes \cite{DE}.

\begin{definition} Let $\dim_{t^nH} \emptyset=-1$ for all $n\in \mathbb{N}$.
For a non-empty metric space $X$ set $\dim_{t^0H}X=\dim_H X$.
The \emph{$n$th inductive topological Hausdorff dimension} is defined inductively as
$$\dim_{t^nH} X=\inf \left\{ d: X \textrm{ has a basis } \mathcal{U} \textrm{ s. t. }
\dim_{t^{n-1}H} \partial {U} \leq d-1 \textrm{ for every } U\in \mathcal{U} \right\}.$$
\end{definition}

The main goal of this paper is to generalize Theorem \ref{t:old} to higher dimensions,
which can be viewed as an application of the inductive topological Hausdorff dimensions.

If $\dim_t K<n$ then the fibers of the generic map $f\in C_n(K)$ are finite, see Theorem
\ref{t:Hurewicz} below.

Suppose $\dim_t K\geq n$. The main theorem of the paper is the following.

\begin{x}[Main Theorem, simplified version] Let $n\in \mathbb{N}^+$ and assume that $K$ is a compact metric space with $\dim_{t}
K\geq n$. Then for the generic $f\in C_n(K)$
\begin{enumerate}[(i)]
\item $\dim_{H} f^{-1} (y)\leq \dim_{t^nH} K-n$ for all $y\in \mathbb{R}^n$,
\item for every $d<\dim_{t^nH} K$ there exists a non-empty
open ball $U_{f,d}\subseteq \mathbb{R}^n$ such that $\dim_{H} f^{-1} (y)\geq
d- n$ for every $y\in U_{f,d}$.
\end{enumerate}
\end{x}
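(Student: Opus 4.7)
My plan is to prove the theorem by induction on $n$, with the base case $n=1$ supplied by Theorem \ref{t:old}. For the inductive step I identify $C_n(K) \cong C_{n-1}(K) \times C_1(K)$ and split a generic $f$ as $f = (f', f_n)$ with $f' \in C_{n-1}(K)$ and $f_n \in C_1(K)$, applying Kuratowski--Ulam so that independent generic conditions on the two factors jointly define a comeager set in $C_n(K)$. A preparatory observation I would use throughout is that for every closed $F \subseteq K$ the restriction map $C_{n-1}(K) \to C_{n-1}(F)$ is an open continuous surjection (via quantitative Tietze extension), and hence a generic $f' \in C_{n-1}(K)$ restricts to a generic element of $C_{n-1}(F)$.

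For the upper bound (i), I fix $d > \dim_{t^nH} K$ and a countable basis $\mathcal{U}$ of $K$ with $\dim_{t^{n-1}H} \partial U \leq d-1$ for every $U \in \mathcal{U}$. Adapting the perturbation argument of \cite{BBE}, I arrange that for a generic $f_n \in C_1(K)$ every level set $f_n^{-1}(t)$ is contained in a finite union $\bigcup_{U \in \mathcal{F}(t)} \partial U$ of boundary pieces from $\mathcal{U}$. The preparatory observation, applied to each of the countably many sets $\partial U$, yields that a generic $f' \in C_{n-1}(K)$ restricts to a generic element of $C_{n-1}(\partial U)$ for all $U$ simultaneously. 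Invoking the inductive hypothesis on each compact $\partial U$ gives $\dim_H (f'|_{\partial U})^{-1}(y') \leq (d-1)-(n-1) = d-n$ for every $y'$ and $U$, and the inclusion
\[
f^{-1}(y',t) \subseteq \bigcup_{U \in \mathcal{F}(t)} (f'|_{\partial U})^{-1}(y')
\]
expresses the fiber as a finite union of sets of Hausdorff dimension at most $d-n$. Letting $d \searrow \dim_{t^nH} K$ gives (i).

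The lower bound (ii) requires an equivalent, ``fibered'' characterization of $\dim_{t^nH}$, which I would develop in a preliminary section using tools from topological dimension theory (partitions, essential families, swelling of covers). The target equivalence is roughly that $\dim_{t^nH} K > d$ if and only if for a generic $g \in C_{n-1}(K)$ there is a non-empty open $V \subseteq \mathbb{R}^{n-1}$ with $\dim_{tH} g^{-1}(y') > d-(n-1)$ for every $y' \in V$; this repackages the abstract inductive definition of $\dim_{t^nH}$ into a direct statement about generic fibers. Granting this, I fix $d < \dim_{t^nH} K$, obtain the generic $g = (f_1,\ldots,f_{n-1})$ and the ball $V$ of thick fibers, and for each $y' \in V$ apply Theorem \ref{t:old} to the generic restriction $f_n|_{g^{-1}(y')}$ to produce a non-degenerate interval $I_{f_n,y'}$ on which $\dim_H (f_n|_{g^{-1}(y')})^{-1}(t) \geq d-n$. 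A Baire-category argument, combined with continuity of the map $y' \mapsto g^{-1}(y')$ in the Hausdorff metric on a suitable sub-ball of $V$, then extracts a common sub-interval $J$ so that $V' \times J$ serves as the desired open ball $U_{f,d}$.

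The main obstacle is the equivalent characterization of $\dim_{t^nH}$ underlying (ii); this is where the techniques from topological dimension theory advertised in the introduction must do the real work, converting the abstract inductive ``bases with small boundaries'' definition into genericity statements about fibers of maps. Part (i) is, in contrast, a clean inductive reduction to \cite{BBE} once one correctly combines the perturbation of $f_n$ with the open-restriction trick for $f'$. A further delicate point in (ii) is obtaining uniform control on the location and length of the intervals $I_{f_n,y'}$: without some continuity in $y'$ one would only assemble sets of positive measure rather than a genuine open ball, so ensuring this uniformity is the crucial quantitative step.
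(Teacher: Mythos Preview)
Part (i) is salvageable but misstated: a generic $f_n \in C_1(K)$ does \emph{not} have every level set in a finite union of boundaries from $\mathcal{U}$ (take $K=[0,1]^2$, where generic level sets are curves not confined to any finite subfamily of a fixed countable collection). What is true is that with $G=\bigcup_{U\in\mathcal{U}}\partial U$ one has $\dim_t(K\setminus G)\leq 0$, so the generic $f_n$ is injective on $K\setminus G$ and each $f_n^{-1}(t)\setminus G$ is at most a point; then the inductive hypothesis on each compact $\partial U$ together with countable stability of $\dim_H$ gives the bound. The paper's route is organized differently: it first proves the decomposition characterization $\dim_{t^nH}K=\min P_{d^nH}$ (there exists $A\subseteq K$ with $\dim_H A\leq d-n$ and $\dim_t(K\setminus A)\leq n-1$) and then applies Hurewicz's theorem once to the $(n-1)$-dimensional remainder to get $\#(f^{-1}(y)\setminus A)\leq n$ for the generic $f\in C_n(K)$.

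The genuine gap is in (ii). Your fibered characterization---that $\dim_{t^nH}K>d$ exactly when the generic $g\in C_{n-1}(K)$ has a non-empty open set of fibers with $\dim_{tH}>d-(n-1)$---is asserted but not proved, and it is not a consequence of the inductive definition; establishing it would already require machinery comparable to the theorem itself. Even granting it, the next step does not work: applying Theorem~\ref{t:old} to $f_n|_{g^{-1}(y')}$ imposes, for each of uncountably many $y'$, a genericity condition in a \emph{different} function space $C_1(g^{-1}(y'))$, and there is no mechanism making a single $f_n$ simultaneously generic in all of them. Hausdorff-metric continuity of $y'\mapsto g^{-1}(y')$ is neither generically available (it would require $g$ to be open) nor sufficient, since the interval $I_{f_n,y'}$ produced by Theorem~\ref{t:old} is not controlled by the Hausdorff distance between domains. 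The paper bypasses all of this by working directly in $C_n(K)$: it introduces the class $\mathcal{N}_n(d)$ of $d$-level narrow maps (those admitting a dense set of $y$ with $\dim_H f^{-1}(y)\leq d-n$) and proves $\dim_{t^nH}K=\min\{d\geq n:\mathcal{N}_n(d)\textrm{ is somewhere dense}\}$ (Theorem~\ref{t:2=}). Then $d<\dim_{t^nH}K$ forces $\mathcal{N}_n(d)$ to be nowhere dense, and any $f$ outside its closure automatically carries the required open ball $U_{f,d}$. The substantive step is Lemma~\ref{l:wnsn}, which turns somewhere-density of $\mathcal{N}_n(d)$ directly into the partition characterization $P_{p^nH}$---no slicing of $f$, no uniformity over uncountably many fibers.
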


\begin{y} If $K$ is a compact metric space with $\dim_{t}
K\geq n$ then for the generic $f\in C_n(K)$
$$\sup_{y\in \mathbb{R}^n} \dim_H f^{-1}(y)=\dim_{t^nH}K-n.$$
\end{y}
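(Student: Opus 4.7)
The plan is to derive this corollary as a direct consequence of the Main Theorem. Since $C_n(K)$ is a complete metric space, hence Baire, the intersection of the two comeager sets of functions furnished separately by parts (i) and (ii) of Theorem \ref{t:main} is again comeager. I therefore fix a generic $f\in C_n(K)$ satisfying both conclusions simultaneously.

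The upper bound is immediate from (i): taking the supremum over $y\in\mathbb{R}^n$ in the inequality $\dim_H f^{-1}(y)\leq \dim_{t^nH}K-n$ gives
$$\sup_{y\in \mathbb{R}^n}\dim_H f^{-1}(y) \leq \dim_{t^nH}K-n.$$
For the reverse inequality I would invoke (ii). For each $d<\dim_{t^nH}K$, there is a non-empty open ball $U_{f,d}\subseteq \mathbb{R}^n$ on which every fiber satisfies $\dim_H f^{-1}(y)\geq d-n$; since $U_{f,d}\neq \emptyset$, the supremum is at least $d-n$. Letting $d$ approach $\dim_{t^nH}K$ from below (and letting $d\to\infty$ in the case $\dim_{t^nH}K=\infty$) yields
$$\sup_{y\in \mathbb{R}^n}\dim_H f^{-1}(y) \geq \dim_{t^nH}K-n,$$
and the two inequalities combine to the claimed equality.

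There is essentially no obstacle here: the genuine mathematical content resides entirely in Theorem \ref{t:main}, and this corollary is only its packaged form. The one point worth flagging is the invocation of the Baire category theorem to merge the two separately-stated generic conditions into a single generic $f$; apart from that, the argument is a one-line manipulation of suprema. Verifying the boundary case $\dim_{t^nH}K=n$ is also routine, as both sides then equal $0$ by the convention $\dim_H\emptyset=-1$ together with the lower bound just established.
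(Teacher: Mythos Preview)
Your proposal is correct and matches the paper's intent: the paper states this corollary immediately after the Main Theorem with no separate proof, so the intended argument is exactly the one you give. One minor point: the Main Theorem already asserts (i) and (ii) simultaneously for the generic $f$, so the explicit appeal to Baire to merge two comeager sets is unnecessary (though harmless).
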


If $K$ is also sufficiently homogeneous, for example self-similar,
then we can actually say more.

\begin{z}[simplified version] Let $K$ be a self-similar compact metric space
such that $\dim_{t} K\geq n$. Then for the generic $f\in C_n(K)$ for any $y\in \inter f(K)$
$$\dim_{H} f^{-1}(y)=\dim_{t^nH}K-n.$$
\end{z}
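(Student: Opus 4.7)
The upper bound $\dim_{H} f^{-1}(y)\le \dim_{t^nH}K-n$ at every $y\in\mathbb{R}^n$ is delivered, for a generic $f\in C_n(K)$, by part (i) of the Main Theorem, which applies to any compact metric space with $\dim_{t}K\geq n$. The content of Theorem \ref{t:Kg} is therefore the matching lower bound \emph{at every} $y\in\inter f(K)$, strengthening Main Theorem (ii), which only secures such a bound on some unspecified open ball.

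The strategy is to leverage self-similarity through a countable family of scaled copies of $K$. Write $K=\bigcup_{i=1}^{N}\varphi_{i}(K)$ with contracting similarities $\varphi_{i}$, and for every finite word $\omega=(i_1,\ldots,i_k)$ set $\varphi_\omega=\varphi_{i_1}\circ\cdots\circ\varphi_{i_k}$ and $K_\omega=\varphi_\omega(K)$. Each $K_\omega$ is similar to $K$, so $\dim_{t^nH}K_\omega=\dim_{t^nH}K$ and $\dim_{t}K_\omega\geq n$, and the Main Theorem applies to each $K_\omega$. The restriction map $\rho_\omega\colon C_n(K)\to C_n(K_\omega)$ is a continuous linear surjection admitting a continuous linear section by Dugundji's simultaneous extension theorem, so $\rho_\omega$ is open and preimages of residual sets remain residual. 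Intersecting the countably many pullbacks with the residual set coming from Main Theorem (i) on $K$, I obtain a residual $\mathcal{R}\subseteq C_n(K)$ such that every $f\in\mathcal{R}$ satisfies (i) and, for every word $\omega$ and every rational $d<\dim_{t^nH}K$, admits an open ball $U_{\omega,d}\subseteq f(K_\omega)$ with $\dim_{H}(f|_{K_\omega})^{-1}(y)\geq d-n$ for every $y\in U_{\omega,d}$.

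Fix $f\in\mathcal{R}$, $y_0\in\inter f(K)$, and a rational $d<\dim_{t^nH}K$. It suffices to exhibit a word $\omega$ with $y_0\in U_{\omega,d}$, for then $\dim_{H}f^{-1}(y_0)\geq\dim_{H}(f|_{K_\omega})^{-1}(y_0)\geq d-n$. Pick any $x_0\in f^{-1}(y_0)$: self-similarity produces nested words $\omega_k$ with $x_0\in K_{\omega_k}$ and $\diam K_{\omega_k}\to 0$, whence uniform continuity of $f$ gives $\diam f(K_{\omega_k})\to 0$ and each $U_{\omega_k,d}\subseteq f(K_{\omega_k})$ lies in an ever smaller neighborhood of $y_0$. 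This shows that $\bigcup_\omega U_{\omega,d}$ is open and dense in $\inter f(K)$, which is however not yet enough to place $y_0$ itself in the set.

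The main obstacle is precisely this final step from density to pointwise containment. I expect to close it by sharpening $\mathcal{R}$ once more: require generically that, for every word $\omega$, the subfamily $\{U_{\omega',d}:K_{\omega'}\subseteq K_\omega\}$ actually covers $\inter f(K_\omega)$ rather than only being dense in it. Such a covering property should follow from a further countable intersection combining openness of $\rho_\omega$ with repeated applications of Main Theorem (ii) inside each $C_n(K_\omega)$, using that every open subset of $K_\omega$ contains still smaller self-similar pieces $K_{\omega'}$. Propagating this covering property down a nested sequence $K_{\omega_k}\ni x_0$ with $f(K_{\omega_k})$ shrinking around $y_0$ will then force $y_0\in U_{\omega_k,d}$ for some $k$, completing the proof and with it the equality $\dim_{H}f^{-1}(y_0)=\dim_{t^nH}K-n$ at every $y_0\in\inter f(K)$.
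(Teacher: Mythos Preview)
Your reduction to the lower bound and the use of restriction maps $\rho_\omega$ to pull back Main Theorem (ii) to each self-similar piece $K_\omega$ are fine, and you are right that this yields, for generic $f$, an open dense subset $\bigcup_\omega U_{\omega,d}$ of $\inter f(K)$ on which fibers are large. But the step from ``dense'' to ``covers'' is a genuine gap, and your proposed fix is circular: you want to impose generically that $\{U_{\omega',d}:K_{\omega'}\subseteq K_\omega\}$ covers $\inter f(K_\omega)$, yet this is precisely the statement you are trying to prove for $K$ itself (each $K_\omega$ being similar to $K$). No amount of further countable intersection of pullbacks of Main Theorem (ii) will produce this, because Main Theorem (ii) only hands you \emph{some} ball whose location depends on $f$ in an uncontrolled way; along a nested sequence $K_{\omega_k}\ni x_0$ the balls $U_{\omega_k,d}$ shrink toward $y_0$ but can perfectly well miss $y_0$ at every stage.

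The paper closes exactly this gap by a different mechanism. Rather than treating Main Theorem (ii) as a black box, it extracts by Baire category a second-category set $\mathcal{H}_m\subseteq C_n(K)$ on which fibers are large over a \emph{fixed} ball $B(y_0,r_1)$. Self-similarity is then used not via restriction but via conjugation: for a suitably chosen $f_0$ one covers $K$ by finitely many small balls $B(x_i,\delta)$, takes bi-Lipschitz copies $K_i\subseteq B(x_i,\delta')$ of $K$, and transports $\mathcal{H}_m$ by $f\mapsto \psi_i\circ f\circ \phi_i$ so that the fixed ball $B(y_0,r_1)$ is sent by the affine $\psi_i$ onto a small ball around $f_0(x_i)$. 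The finitely many images then cover $f_0(K)$. The remaining issue---that one must control $\inter g(K)$ for $g$ near $f_0$, not $f_0(K)$ itself---is handled by the dense sets $\mathcal{D}_m$ of Lemma~\ref{l:D}: for $f_0\in\mathcal{D}_m$ and $g\in B(f_0,\varepsilon)$, every $y\in g(K)\setminus B(\partial g(K),1/m)$ already lies in $f_0(K)$. This image-stability lemma is the missing ingredient in your outline; without something playing its role, the passage from density to pointwise containment does not go through.
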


In the Preliminaries section we introduce some notation and
definitions.

In Section~\ref{s:prop} we prove some basic properties
of inductive topological Hausdorff dimensions.

In order to prove our Main Theorem in Section~\ref{s:equiv} we give some equivalent definitions for the inductive topological Hausdorff dimensions. These equivalent definitions are more or less analogous to the corresponding equivalent definitions of the topological dimension. Throughout the section
we apply the standard techniques of the theory of topological dimension. Perhaps these results can be interesting in their own right, this is the reason why we work in separable metric spaces instead of compact ones.

In Section~\ref{s:val} we completely describe the possible values of the inductive topological Hausdorff dimensions based on
ideas from \cite{BBE}. This is a supplement for the theory of the dimensions, we will not use this result in the subsequent sections.

In Section~\ref{s:main} we consider two more equivalent definitions for the inductive topological Hausdorff dimensions in compact metric spaces, and we
prove the Main Theorem based on Section~\ref{s:equiv}.

In Section~\ref{s:end} we make the Main Theorem more precise. We show that in Corollary~\ref{c:main} the supremum is attained.
We generalize Kirchheim's theorem for sufficiently homogeneous compact spaces.
The proofs of this section rely heavily on the methods developed in \cite{BBE2},
where the case of real-valued functions is investigated.

\section{Preliminaries}

Let $(X,d)$ be a metric space, and let $A,B\subseteq X$ be arbitrary
sets. We denote by $\cl (A)$, $\inter A$ and $\partial A$ the closure,
interior and boundary of $A$, respectively.
The diameter of $A$ is denoted by $\diam A$. We use
the convention $\diam \emptyset = 0$. The distance of the sets $A$
and $B$ is defined by $\dist (A,B)=\inf \{d(x,y): x\in A, \, y\in
B\}$. Let $B(x,r)=\{y\in X: d(x,y)\leq r\}$, $U(x,r)=\{y\in X:
d(x,y)< r\}$ and $B(A,r)=\{x\in X: \dist(B,\{x\})\leq r\}$.

For two metric spaces $(X,d_{X})$ and $(Y,d_{Y})$ a function
$f\colon X\to Y$ is \emph{Lipschitz} if there exists a constant $C
\in \mathbb{R}$ such that $d_{Y}(f(x_{1}),f(x_{2}))\leq C \cdot
d_{X}(x_{1},x_{2})$ for all $x_{1},x_{2}\in X$. A
function $f\colon X\to Y$ is called \emph{bi-Lipschitz}  if $f$ is a
bijection and both $f$ and $f^{-1}$ are Lipschitz.

For every $s\geq 0$ the \emph{$s$-Hausdorff content} of a metric space $X$ is defined as
$$ \mathcal{H}^{s}_{\infty}(X)=\inf \left\{ \sum_{i=1}^\infty (\diam
U_{i})^{s} : X \subseteq \bigcup_{i=1}^{\infty} U_{i} \right\}.$$
Then the \emph{Hausdorff dimension} of $X$ is
$$\dim_{H} X = \inf\{s \ge 0: \mathcal{H}_{\infty}^{s}(X) =0\}.$$
We adopt the convention that $\dim_{H}\emptyset=-1$ throughout the
paper. It is not difficult to see using the regularity of $\mathcal{H}^{s}_{\infty}$ that
every set is contained in a $G_\delta$ set of the same Hausdorff
dimension. For more information on these concepts see \cite{F} or \cite{Ma}.
The following facts are easy consequences of the definitions.

\begin{fact} \label{f:H^s} If $\mathcal{H}^{s}_{\infty} (X)\leq 1$ then
$\mathcal{H}^{t}_{\infty}(X)\leq \mathcal{H}^{s}_{\infty} (X)$ for all $t\geq s$.
\end{fact}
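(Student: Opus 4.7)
The proof is a direct estimate from the definition of the Hausdorff content, so I expect essentially no obstacle. First I would fix $\varepsilon > 0$ and select a countable cover $\{U_i\}_{i=1}^\infty$ of $X$ with $\sum_i (\diam U_i)^s \le \mathcal{H}^s_\infty(X) + \varepsilon$. Since $\mathcal{H}^s_\infty(X) \le 1$, each individual summand satisfies $(\diam U_i)^s \le 1 + \varepsilon$, which forces the uniform bound $\diam U_i \le (1+\varepsilon)^{1/s}$ on every member of the cover (here one assumes $s > 0$; the case $s = 0$ is trivial since then $\mathcal{H}^0_\infty(X) \le 1$ means $X$ is empty or is a single point).

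Next I would raise diameters to the $t$-th power. Writing $(\diam U_i)^t = (\diam U_i)^{t-s}(\diam U_i)^s$ and applying the uniform bound from the previous step, I get $(\diam U_i)^t \le (1+\varepsilon)^{(t-s)/s}(\diam U_i)^s$. Summing over $i$ produces
$$\mathcal{H}^t_\infty(X) \le \sum_i (\diam U_i)^t \le (1+\varepsilon)^{(t-s)/s}\bigl(\mathcal{H}^s_\infty(X) + \varepsilon\bigr),$$
and letting $\varepsilon \to 0^+$ yields the desired inequality $\mathcal{H}^t_\infty(X) \le \mathcal{H}^s_\infty(X)$.

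The only point that requires a little care is that the naive pointwise inequality $(\diam U_i)^t \le (\diam U_i)^s$ needs $\diam U_i \le 1$, which is not automatic for an arbitrary cover. The hypothesis $\mathcal{H}^s_\infty(X) \le 1$ is exactly what makes the diameters of the covering sets bounded by a number arbitrarily close to $1$, and the $\varepsilon$-absorption above handles the small discrepancy. No idea beyond the definition of the Hausdorff content is needed.
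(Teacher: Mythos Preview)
Your argument for $s>0$ is correct and is exactly the standard computation one expects; the paper gives no proof (it states the fact as an ``easy consequence of the definitions''), so there is nothing further to compare.

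Your parenthetical handling of $s=0$, however, is mistaken. The assertion that $\mathcal{H}^{0}_{\infty}(X)\le 1$ forces $X$ to be empty or a single point is false: any nonempty $X$ can be covered by the single set $X$, giving $\sum_i(\diam U_i)^0=1$, so in fact $\mathcal{H}^{0}_{\infty}(X)\le 1$ holds for \emph{every} metric space. Worse, the inequality in the Fact genuinely fails for $s=0$: take $X=[0,2]$, where $\mathcal{H}^{0}_{\infty}(X)=1$ but $\mathcal{H}^{1}_{\infty}(X)=2$. So the statement should really be read with $s>0$; fortunately every application of this fact in the paper uses a strictly positive exponent (typically $s=d-n+\varepsilon$ with $\varepsilon>0$), so the defect is harmless. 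Simply drop the $s=0$ remark, or note that the case is vacuous for the paper's purposes.
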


\begin{fact} \label{f:equiv} For a metric space $X$ and $s\geq 0$ the following statements are equivalent:
\begin{enumerate}[(i)]
\item $\dim_H X\leq s$;
\item $\mathcal{H}_{\infty}^{s+\varepsilon}(X)\leq \varepsilon$ for all $\varepsilon>0$;
\item $\mathcal{H}_{\infty}^{s+1/i}(X)\leq 1/i$ for all $i\in \mathbb{N}^+$.
\end{enumerate}
\end{fact}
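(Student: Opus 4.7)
The plan is to prove the cyclic chain (i) $\Rightarrow$ (ii) $\Rightarrow$ (iii) $\Rightarrow$ (i). The implication (ii) $\Rightarrow$ (iii) is immediate by specialising $\varepsilon$ to $1/i$, so the real content lies in the two outer implications; both reduce to a single application of the monotonicity principle captured in Fact \ref{f:H^s}.

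For (iii) $\Rightarrow$ (i), I would fix an arbitrary $t > s$ and choose an integer $i$ so large that $1/i \leq t-s$, so that $s + 1/i \leq t$. Condition (iii) supplies $\mathcal{H}^{s+1/i}_{\infty}(X) \leq 1/i \leq 1$, and then Fact \ref{f:H^s} applied at exponent $s + 1/i$ gives $\mathcal{H}^{t}_{\infty}(X) \leq \mathcal{H}^{s+1/i}_{\infty}(X) \leq 1/i$. Letting $i \to \infty$ forces $\mathcal{H}^{t}_{\infty}(X) = 0$; since $t > s$ was arbitrary, the definition of Hausdorff dimension gives $\dim_H X \leq s$.

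For (i) $\Rightarrow$ (ii), fix $\varepsilon > 0$. Because $\dim_H X \leq s < s + \varepsilon$, the definition of $\dim_H$ as an infimum produces some exponent $s' \in [0,\, s + \varepsilon)$ with $\mathcal{H}^{s'}_{\infty}(X) = 0$. Then Fact \ref{f:H^s}, applied at exponent $s'$ where the content is trivially $\leq 1$, yields $\mathcal{H}^{s + \varepsilon}_{\infty}(X) \leq \mathcal{H}^{s'}_{\infty}(X) = 0 \leq \varepsilon$, which is condition (ii).

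There is essentially no obstacle here; the only small piece of bookkeeping is the convention $\dim_H \emptyset = -1$, but if $X = \emptyset$ then every Hausdorff content vanishes and every condition is trivially satisfied. The heart of the argument is just that Fact \ref{f:H^s} turns a finite, small value of $\mathcal{H}^{s'}_{\infty}$ into a similarly small value of $\mathcal{H}^{t}_{\infty}$ for every larger $t$, which is exactly the quantitative bridge between the qualitative statement (i) and the explicit bounds in (ii) and (iii).
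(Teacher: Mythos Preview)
Your proof is correct. The paper does not actually prove this fact, stating only that it is an ``easy consequence of the definitions''; your argument is precisely the routine verification the paper omits, and the cyclic chain via Fact~\ref{f:H^s} is the natural way to carry it out.
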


Let $X$ be a \emph{complete} metric space. A set is \emph{somewhere
dense} if it is dense in a non-empty open set, and otherwise it is
called \emph{nowhere dense}. We say that $M \subseteq X$ is
\emph{meager} if it is a countable union of nowhere dense sets, and
a set is of \emph{second category} if it is not meager. A set is
called \emph{co-meager} if its complement is meager. By the Baire
Category Theorem a set is co-meager iff it contains a dense $G_\delta$ set. We
say that the \emph{generic} element $x \in X$ has property $\mathcal{P}$ if
$\{x \in X : x \textrm{ has property } \mathcal{P} \}$ is co-meager. The set
$A\subseteq X$ has the \emph{Baire property} if $A=U\Delta M$ where $U$ is open
and $M$ is meager. If a set is of second category in every non-empty open set
and has the Baire property then it is co-meager.

If $X$ is a metric space and $A,B$ are disjoint subsets of $X$
then we say that $L\subseteq X$ is a \emph{partition between
$A$ and $B$} if there are open sets $U$, $V$ such that $A\subseteq U$,
$B\subseteq V$, $U\cap V=\emptyset$ and $L=X\setminus (U\cup V)$.
The following lemma is \cite[1.2.11.~Lemma]{E}.

\begin{lemma} \label{l:sep1} Let $X$ be a metric space and let $Z\subseteq X$ be separable with $\dim_t Z=0$. Then for
every pair $A, B$ of disjoint closed subsets of $X$ there exists a partition $L$ between
$A$ and $B$ such that $L\cap Z=\emptyset$.
\end{lemma}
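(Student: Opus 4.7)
The plan is to produce disjoint open sets $U, V \subseteq X$ with $A \subseteq U$, $B \subseteq V$, and $U \cup V \supseteq Z$; then $L := X \setminus (U \cup V)$ is the required partition, automatically disjoint from $Z$. Since $X$ is metric and hence normal, Urysohn's lemma (applied twice) gives disjoint open $U_0 \supseteq A$, $V_0 \supseteq B$ with $\overline{U_0} \cap \overline{V_0} = \emptyset$, reducing the problem to absorbing the closed-in-$Z$ set $Z_0 := Z \setminus (U_0 \cup V_0)$ into $U \cup V$ without destroying disjointness.

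The key tool is that $Z$, being separable metric with $\dim_t Z = 0$, is strongly zero-dimensional: any two disjoint closed-in-$Z$ subsets are separated by a clopen-in-$Z$ set, and $Z$ has a countable base of clopen-in-$Z$ sets. Applying strong zero-dimensionality to the disjoint closed-in-$Z$ sets $A \cap Z$ and $B \cap Z$ yields a clopen separator $H \subseteq Z$ with $A \cap Z \subseteq H$ and $H \cap B = \emptyset$; the plan is to route $Z_0 \cap H$ into $U$ and $Z_0 \cap (Z \setminus H)$ into $V$. I would cover $Z_0$ by countably many clopen-in-$Z$ sets $\{W_n\}$, each of small $X$-diameter, each contained entirely in $H$ or entirely in $Z \setminus H$ (possible by refining the clopen base of $Z$ via intersections with $H$ and $Z \setminus H$), and each with $X$-closure disjoint from $\overline{U_0} \cup \overline{V_0}$. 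Disjointifying $W_n' := W_n \setminus \bigcup_{m<n} W_m$ gives a clopen-in-$Z$ partition of $Z_0$.

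The final step is a countable recursion. Having defined disjoint open $U_{n-1}, V_{n-1}$ with disjoint closures extending $U_0, V_0$ and containing all $W_m'$ ($m < n$) on the appropriate side, I use normality of $X$ to select a small open neighborhood $P_n$ of $W_n'$ whose closure is disjoint from the current opposite side, and add $P_n$ to the correct $U_n$ or $V_n$. The main obstacle is preserving disjoint closures through the recursion, since $X$-closures of clopen-in-$Z$ pieces can overlap in $X \setminus Z$ even when the pieces themselves are disjoint; this is overcome by confining $P_n$ to a small $X$-ball around $W_n'$ and shrinking $P_n$ by normality at each stage so that it avoids the finitely many previously constructed closures on the opposite side. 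Setting $U := \bigcup_n U_n$, $V := \bigcup_n V_n$ then yields the partition, following the standard argument in dimension theory as in \cite[1.2.11]{E}.
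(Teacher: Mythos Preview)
The paper does not give its own proof of this lemma; it simply records it as \cite[1.2.11.\ Lemma]{E} and moves on. So the only benchmark is Engelking's argument.

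Your recursion has a genuine gap. You claim to maintain the invariant that $\overline{U_{n-1}}\cap\overline{V_{n-1}}=\emptyset$, and at stage $n$ (say $W_n'\subseteq H$) to choose an open $P_n\supseteq W_n'$ with $\overline{P_n}\cap\overline{V_{n-1}}=\emptyset$. Normality only supplies such a $P_n$ if $\overline{W_n'}^{\,X}\cap\overline{V_{n-1}}=\emptyset$; but $\overline{V_{n-1}}$ already contains each earlier $\overline{P_m}\supseteq\overline{W_m'}^{\,X}$, and you yourself note that $X$-closures of clopen-in-$Z$ pieces on opposite sides of $H$ can meet in $X\setminus Z$. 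Concretely, with $X=\mathbb{R}^2$, $Z=\mathbb{Q}^2$, and $H=Z\cap\{x<\alpha\}$ for irrational $\alpha$, any two small clopen rectangles in $Z$ abutting the line $x=\alpha$ from opposite sides have $X$-closures sharing a segment of that line; once the first is absorbed (with any neighbourhood $P_1$), the disjoint-closures invariant fails for the second. ``Shrinking $P_n$'' cannot repair this, since $P_n$ must still contain $W_n'$.

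The standard argument avoids the recursion entirely via hereditary normality of metric spaces. After choosing $U_0,V_0$ with $\overline{U_0}\cap\overline{V_0}=\emptyset$ and a clopen $C\subseteq Z$ separating $Z\cap\overline{U_0}$ from $Z\cap\overline{V_0}$, one checks directly that $E=U_0\cup C$ and $F=V_0\cup(Z\setminus C)$ are \emph{separated} in $X$: the only nontrivial terms are $\overline{C}^{\,X}\cap V_0$ (empty since $V_0$ is open and $V_0\cap C=\emptyset$) and $\overline{C}^{\,X}\cap(Z\setminus C)$ (empty since $\overline{C}^{\,X}\cap Z=\mathrm{cl}_Z(C)=C$). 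Hereditary normality then produces disjoint open $U\supseteq E$, $V\supseteq F$ in a single step, giving $L=X\setminus(U\cup V)$ with $L\cap Z=\emptyset$. This one invocation is exactly the device that handles the overlap-in-$X\setminus Z$ obstacle your recursion stumbles on.
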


Let us recall the following decomposition theorem for the topological dimension, see \cite[1.5.7.~Thm.]{E} and \cite[1.5.8.~Thm.]{E}.

\begin{theorem} \label{t:topeq} For a separable metric space $X$ and $n\in \mathbb{N}$ the following statements are equivalent:
\begin{enumerate}[(i)]
\item $\dim_t X\leq n$;
\item $X=Y\cup Z$ such that $\dim_t Y\leq n-1$ and $\dim_t Z\leq 0$;
\item $X=Z_1\cup\dots \cup Z_{n+1}$ such that $\dim_t Z_i\leq 0$ for all $i\in \{1,\dots,n+1\}$.
\end{enumerate}
\end{theorem}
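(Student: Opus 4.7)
The plan is to prove the equivalences in the cycle (iii) $\Rightarrow$ (ii) $\Rightarrow$ (i) $\Rightarrow$ (iii), running an induction on $n$ throughout. The base case $n=0$ is trivial in each implication: (i), (ii), (iii) all just say $\dim_t X \leq 0$. For the inductive step of (iii) $\Rightarrow$ (ii) I would simply group the pieces as $Y = Z_1 \cup \cdots \cup Z_n$ and $Z = Z_{n+1}$; then $Y$ already satisfies condition (iii) at level $n-1$, so by the inductive hypothesis of (iii) $\Rightarrow$ (i) applied to $Y$ we get $\dim_t Y \leq n-1$.

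The heart of the matter is (ii) $\Rightarrow$ (i), which is where Lemma \ref{l:sep1} does all the work. Given $x \in X$ and an arbitrary open neighborhood $U$ of $x$, I would apply Lemma \ref{l:sep1} with $A = \{x\}$, $B = X \setminus U$, and the separable zero-dimensional set playing the role of $Z$ in that lemma; this produces disjoint open sets $V \supseteq \{x\}$, $W \supseteq X \setminus U$ with $L := X \setminus (V \cup W) \subseteq Y$. Since $\partial V \subseteq L \subseteq Y$, monotonicity of $\dim_t$ yields $\dim_t \partial V \leq \dim_t Y \leq n-1$. Varying $x$ and $U$, these sets $V$ assemble into a basis witnessing $\dim_t X \leq n$.

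For (i) $\Rightarrow$ (iii), separability lets us extract a \emph{countable} basis $\{V_i\}_{i=1}^{\infty}$ with $\dim_t \partial V_i \leq n-1$. Put $W = \bigcup_{i} \partial V_i$. Each point of $X \setminus W$ has a neighborhood basis consisting of the traces of those $V_i$ whose boundaries miss it; these traces are clopen in $X \setminus W$, so $\dim_t (X \setminus W) \leq 0$. The countable sum theorem for topological dimension (applicable because each $\partial V_i$ is closed in $X$) gives $\dim_t W \leq n-1$, and the inductive hypothesis decomposes $W = Z_1 \cup \cdots \cup Z_n$ with $\dim_t Z_i \leq 0$. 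Setting $Z_{n+1} = X \setminus W$ produces the required decomposition.

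The main obstacle is the direction (ii) $\Rightarrow$ (i): without Lemma \ref{l:sep1} one has no way to produce open neighborhoods whose boundaries lie entirely inside the $(n-1)$-dimensional part $Y$, and the lemma itself relies on a nontrivial separation argument for zero-dimensional sets. The other place where care is needed is the invocation of the sum theorem in (i) $\Rightarrow$ (iii), which is why the hypothesis that $X$ be separable metric (so that a countable basis exists and the boundaries $\partial V_i$ form a countable closed cover of $W$) is essential.
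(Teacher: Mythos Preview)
The paper does not supply its own proof of this theorem: it is quoted in the Preliminaries as a classical result, with the reference ``see \cite[1.5.7.~Thm.]{E} and \cite[1.5.8.~Thm.]{E}.'' Your argument is the standard textbook proof (essentially the one in Engelking or Hurewicz--Wallman) and it is correct; the use of Lemma~\ref{l:sep1} for (ii)~$\Rightarrow$~(i) and of the countable sum theorem for (i)~$\Rightarrow$~(iii) is exactly how those references proceed. One tiny quibble: Lemma~\ref{l:sep1} as stated requires $\dim_t Z = 0$ rather than $\dim_t Z \le 0$, so in (ii)~$\Rightarrow$~(i) you should dispose of the trivial case $Z=\emptyset$ separately (then $X=Y$ already has $\dim_t X \le n-1$).
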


Let $X$ be a metric space and let $\mathcal{A},\mathcal{B}$ be families of subsets of $X$, where repeated copies of any given member are allowed. Let
$\mesh \mathcal{A}=\sup \{\diam A: A\in \mathcal{A}\}$. We say that $\mathcal{A}$ is a \emph{cover} of $X$ if $\bigcup \mathcal{A}=X$,
and $\mathcal{A}$ is \emph{locally finite} if every $x\in X$ has a neighborhood that intersects only finitely many $A\in \mathcal{A}$.
The family $\mathcal{A}$ is \emph{open (closed)} if every $A\in \mathcal{A}$ is open (closed) in $X$. We say that $\mathcal{B}$ is a \emph{refinement} of the cover
$\mathcal{A}$ if $\mathcal{B}$ is a cover of $X$ and for every $B\in \mathcal{B}$ there is an $A\in \mathcal{A}$ such that $B\subseteq A$.
The following theorem claims that every metric space is paracompact. It is due to Stone, see \cite[4.4.1.~Thm.]{E2} for a proof.
\begin{theorem}[Stone] \label{t:stone} Every open cover of a metric space has a locally finite open refinement.
\end{theorem}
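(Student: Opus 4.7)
The plan is to adapt M.~E.~Rudin's short proof that every metric space is paracompact. Fix an open cover $\mathcal{U}$ of the metric space $(X,d)$ and well-order it as $\mathcal{U} = \{U_\alpha : \alpha < \kappa\}$ using the axiom of choice. Define by recursion on $n \in \mathbb{N}^+$, for each $\alpha < \kappa$, the set
$$F_{\alpha,n} = \{x \in U_\alpha : U(x, 3 \cdot 2^{-n}) \subseteq U_\alpha, \ x \notin U_\beta \text{ for } \beta < \alpha, \ x \notin V_{\gamma,m} \text{ for all } m < n\},$$
and let $V_{\alpha,n} = \{y \in X : \dist(y, F_{\alpha,n}) < 2^{-n}\}$, the open $2^{-n}$-neighborhood of $F_{\alpha,n}$. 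I would then claim that $\{V_{\alpha,n}\}_{\alpha,n}$ is the desired locally finite open refinement of $\mathcal{U}$.

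Each $V_{\alpha,n}$ is open by construction, and $V_{\alpha,n} \subseteq U_\alpha$ since $2^{-n} < 3 \cdot 2^{-n}$. For the covering property, given $x \in X$, let $\alpha$ be least with $x \in U_\alpha$ and choose $n$ minimal so that $U(x, 3 \cdot 2^{-n}) \subseteq U_\alpha$; then either $x$ already lies in some $V_{\gamma,m}$ with $m < n$, or by construction $x \in F_{\alpha,n} \subseteq V_{\alpha,n}$.

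The main obstacle is local finiteness, and it rests on the following separation estimate: for fixed $n$ and distinct ordinals $\alpha < \beta$, every $y \in F_{\beta,n}$ lies outside $U_\alpha$, while $U(z, 3 \cdot 2^{-n}) \subseteq U_\alpha$ for every $z \in F_{\alpha,n}$, so $\dist(F_{\alpha,n}, F_{\beta,n}) \geq 3 \cdot 2^{-n}$ and consequently $\dist(V_{\alpha,n}, V_{\beta,n}) \geq 2^{-n}$. Given $x \in X$, pick $\beta, m$ with $x \in V_{\beta,m}$, set $\varepsilon = (2^{-m} - \dist(x, F_{\beta,m}))/2 > 0$, and let $W$ be the open $\varepsilon$-ball around $x$. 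For each $n \leq m$ the separation estimate shows that $W$ meets at most one $V_{\alpha,n}$. For $n > m$ with $2^{-n} < \varepsilon$, the triangle inequality yields $W \cap V_{\alpha,n} = \emptyset$: any $w \in F_{\alpha,n}$ within distance $2^{-n}$ of $W$ satisfies $\dist(w, F_{\beta,m}) < 2^{-n} + \varepsilon + \dist(x, F_{\beta,m}) < 2^{-m}$, hence $w \in V_{\beta,m}$, contradicting the defining exclusion in $F_{\alpha,n}$ since $m < n$. Only finitely many $n$ with $m < n \leq -\log_2 \varepsilon$ remain, so $W$ meets only finitely many $V_{\alpha,n}$ in total, completing the proof.
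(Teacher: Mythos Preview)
The paper does not prove Stone's theorem; it merely quotes it and refers the reader to Engelking's \emph{General Topology} for a proof. So there is no in-paper argument to compare against, and your decision to supply Rudin's short proof is entirely appropriate.

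Your construction of the $V_{\alpha,n}$ and the verification that they form an open refinement covering $X$ are correct, as is the key separation estimate $\dist(V_{\alpha,n},V_{\beta,n})\ge 2^{-n}$ for $\alpha\neq\beta$. There is, however, a small gap in the local finiteness argument. You correctly handle $n\le m$ (where $2\varepsilon<2^{-m}\le 2^{-n}$ forces $W$ to meet at most one $V_{\alpha,n}$) and $n$ with $2^{-n}<\varepsilon$ (where $W$ meets no $V_{\alpha,n}$). But for the finitely many intermediate $n$ with $m<n$ and $2^{-n}\ge\varepsilon$ you only conclude that there are finitely many such $n$, without checking that for each one $W$ meets only finitely many $V_{\alpha,n}$. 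The separation bound $2^{-n}$ need not exceed $\diam W\le 2\varepsilon$ here, and in a general metric space a ball can contain infinitely many points at pairwise distance $\ge 2^{-n}$, so the ``at most one'' argument does not apply as written.

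The fix is immediate: after identifying the largest such intermediate $n$, say $N$, replace $W$ by the smaller ball $W'=U(x,\varepsilon')$ with $\varepsilon'=\min(\varepsilon,2^{-N-1})$. Then $2\varepsilon'<2^{-n}$ for every $n\le N$, so $W'$ meets at most one $V_{\alpha,n}$ for each such $n$; and since $W'\subseteq W$ and $\varepsilon'\le\varepsilon$, your triangle-inequality argument still rules out all $V_{\alpha,n}$ with $2^{-n}<\varepsilon$. With this adjustment the proof is complete.
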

We say that $\mathcal{B}=\{B_i\}_{i\in I}$ is a \emph{shrinking} of the cover $\mathcal{A}=\{A_i\}_{i\in I}$ if $\mathcal{B}$ is a cover of $X$ and
$B_i\subseteq A_i$ for all $i\in I$. The following theorem is \cite[1.7.8.~Thm.]{E}.
\begin{theorem} \label{t:shrinking} Every finite open cover of a normal space has a closed shrinking.
\end{theorem}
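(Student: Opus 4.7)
The plan is to prove this by induction on the size of the cover, shrinking the open sets one at a time in such a way that the covering property is preserved at every stage. Let $X$ be a normal space and let $\mathcal{A}=\{A_1,\dots,A_n\}$ be a finite open cover. I would construct open sets $V_1,\dots,V_n$ with $\cl V_i\subseteq A_i$ such that after the $k$th step the family $\{V_1,\dots,V_k,A_{k+1},\dots,A_n\}$ is still an open cover of $X$. Granting this, setting $B_i=\cl V_i$ yields closed sets with $B_i\subseteq A_i$ and $\bigcup_{i=1}^n B_i\supseteq\bigcup_{i=1}^n V_i=X$, so $\{B_1,\dots,B_n\}$ is the desired closed shrinking.

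For the inductive step, suppose $\{V_1,\dots,V_{k-1},A_k,A_{k+1},\dots,A_n\}$ is an open cover of $X$, and consider the set
\[
F_k=X\setminus\bigl(V_1\cup\dots\cup V_{k-1}\cup A_{k+1}\cup\dots\cup A_n\bigr).
\]
Then $F_k$ is closed, and the covering hypothesis forces $F_k\subseteq A_k$. I would then invoke the standard strengthening of normality---applied to the disjoint closed sets $F_k$ and $X\setminus A_k$---which guarantees an open set $V_k$ with $F_k\subseteq V_k\subseteq \cl V_k\subseteq A_k$. Because $F_k\subseteq V_k$, every point of $X$ that avoids $V_1\cup\dots\cup V_{k-1}\cup A_{k+1}\cup\dots\cup A_n$ now lies in $V_k$, so $\{V_1,\dots,V_k,A_{k+1},\dots,A_n\}$ is still an open cover, which completes the induction.

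There is no real obstacle here; the only delicate point is to phrase the inductive hypothesis so that at every stage the partial collection of already-shrunk open sets together with the still-original open sets covers $X$, since this is exactly what makes the "leftover" closed set $F_k$ sit inside $A_k$ and thereby allows normality to be applied. The essential ingredient is the well-known equivalent form of normality, that any closed subset of an open set can be enlarged to an open neighborhood whose closure still lies in the open set; this follows in one line from the usual two-disjoint-open-sets formulation.
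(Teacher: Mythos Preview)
Your argument is correct and is the standard inductive shrinking-lemma proof: at each stage the leftover closed set $F_k$ lies in $A_k$, and normality (in the equivalent form that a closed set inside an open set can be separated by an open set with closure still inside) produces the required $V_k$. Note, however, that the paper does not supply its own proof of this statement; it is quoted as a known result with a reference to Engelking's \emph{Dimension Theory} (Theorem~1.7.8 there), so there is no in-paper argument to compare against. Your proof is exactly the classical one found in such references.
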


\section{Basic properties of inductive topological Hausdorff dimensions} \label{s:prop}

This section contains some basic properties of inductive topological
Hausdorff dimensions that will be useful in the following sections.

\begin{fact} \label{f:ttt} If $X$ is a metric space and $n\in \mathbb{N}$ then $\dim_t X\leq \dim_{t^nH} X$.
\end{fact}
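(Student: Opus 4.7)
The plan is to proceed by induction on $n$. The base case $n=0$ is the classical inequality $\dim_t X \le \dim_H X$, which is a standard fact about the small inductive dimension in metric spaces: around every point $x \in X$ one can choose arbitrarily small balls $U(x,r)$ whose boundary spheres $\partial U(x,r)$ meet at most countably many radii on which they carry positive $\mathcal{H}^s$-measure, so that a basis with boundaries of small Hausdorff dimension can be extracted. (The convention $\dim_t \emptyset = \dim_H \emptyset = -1$ handles the empty case trivially.)

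For the inductive step, suppose the inequality $\dim_t Y \le \dim_{t^{n-1}H} Y$ has been established for every metric space $Y$. Fix $X$ and let $d$ be any real number admissible in the defining infimum for $\dim_{t^n H} X$; that is, $X$ has a basis $\mathcal{U}$ such that
\[
\dim_{t^{n-1}H} \partial U \le d-1 \qquad \text{for every } U \in \mathcal{U}.
\]
Applying the inductive hypothesis to each boundary $\partial U$ (as a metric subspace of $X$) gives
\[
\dim_t \partial U \;\le\; \dim_{t^{n-1}H} \partial U \;\le\; d-1,
\]
so the same basis $\mathcal{U}$ witnesses $\dim_t X \le d$. Taking the infimum over all admissible $d$ yields $\dim_t X \le \dim_{t^n H} X$, completing the induction.

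The proof is essentially a one-line monotonicity argument once one unwinds the recursive definitions; there is no genuine obstacle. The only point requiring care is the base case $n=0$, but this is classical and the paper is clearly taking it as background. Note also that the argument does not use separability or compactness of $X$ — the same basis works simultaneously for both notions of dimension, and the inductive hypothesis is applied to the boundaries as arbitrary metric spaces.
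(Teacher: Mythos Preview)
Your proof is correct and follows essentially the same approach as the paper: induction on $n$, with the base case $n=0$ being the classical inequality $\dim_t X \le \dim_H X$ (the paper cites Hurewicz--Wallman for this), and the inductive step using the hypothesis on the boundaries $\partial U$ to show that any basis witnessing $\dim_{t^nH} X \le d$ also witnesses $\dim_t X \le d$.
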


\begin{proof} If $n=0$ then $\dim_t X\leq \dim_H X$ by \cite[Thm.~VII~2.]{HW}.
Let $n\geq 1$ and assume by induction that the inequality holds for $n-1$. Thus every basis $\mathcal{U}$ of $X$ satisfies
$\dim_t \partial U\leq \dim_{t^{n-1}H}\partial U$ for all $U\in \mathcal{U}$, so the definitions of topological dimension
and $n$th inductive topological Hausdorff dimension imply $\dim_{t}X \leq \dim_{t^nH}X$.
\end{proof}

\begin{fact} \label{f:1} If $X$ is a metric space and $n\in \mathbb{N}$ then
$$\dim_t X<n \quad \Longrightarrow \quad \dim_{t^nH} X=\dim_t X.$$
\end{fact}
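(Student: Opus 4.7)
The plan is to prove equality by combining Fact~\ref{f:ttt} (which already gives $\dim_t X\le \dim_{t^nH} X$) with the reverse inequality $\dim_{t^nH} X\le \dim_t X$, obtained by induction on $n$. The guiding idea is that a basis witnessing $\dim_t X \le k$ consists of sets whose boundaries have \emph{topological} dimension at most $k-1$, and once $k-1 < n-1$ the inductive hypothesis upgrades those bounds on the boundaries from topological dimension to $(n{-}1)$st inductive topological Hausdorff dimension, which is exactly what the definition of $\dim_{t^nH}$ demands.

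For the base case $n=0$ the hypothesis $\dim_t X<0$ forces $X=\emptyset$, in which case $\dim_{t^0H}X=\dim_H X=-1=\dim_t X$ by our conventions; alternatively one can start the induction at $n=1$, where $\dim_t X\le 0$ means $X$ has a basis of clopen sets, so $\partial U=\emptyset$ and $\dim_H\partial U=-1$, yielding $\dim_{tH}X\le 0$.

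For the induction step, suppose the statement holds for $n-1$ and assume $\dim_t X < n$. We may take $k=\dim_t X$; if $k=-1$ there is nothing to do, so assume $k\ge 0$. By the definition of the topological dimension there exists a basis $\mathcal{U}$ of $X$ with $\dim_t\partial U\le k-1$ for every $U\in\mathcal{U}$. Since $k-1<n-1$, the inductive hypothesis applies to each $\partial U$ and gives
\[
\dim_{t^{n-1}H}\partial U \;=\; \dim_t\partial U \;\le\; k-1.
\]
Therefore $\mathcal{U}$ witnesses $\dim_{t^nH}X\le k=\dim_t X$, and combined with Fact~\ref{f:ttt} we obtain the desired equality.

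There is no real obstacle; the only points requiring care are the boundary cases ($X=\emptyset$ and $\dim_t X=-1$) and verifying that the inductive hypothesis is legitimately available, which is why the strict inequality $\dim_t X<n$ in the hypothesis (rather than $\le n$) is essential: it is precisely what guarantees $\dim_t\partial U<n-1$ for a basis realising $\dim_t X$.
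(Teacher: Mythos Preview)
Your proof is correct and follows essentially the same approach as the paper's: induction on $n$, using Fact~\ref{f:ttt} for one inequality and, for the other, taking a basis $\mathcal{U}$ witnessing $\dim_t X$ and applying the inductive hypothesis to each $\partial U$ (since $\dim_t\partial U\le \dim_t X-1<n-1$) to conclude $\dim_{t^{n-1}H}\partial U=\dim_t\partial U$. Your treatment of the boundary cases ($n=0$, $X=\emptyset$) is slightly more explicit than the paper's, but the argument is the same.
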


\begin{proof}
For $n=0$ the statement is obvious. Let $n\geq 1$ and assume by induction that the
statement holds for $n-1$. Let $X$ be a metric space such that $\dim_t X<n$. As $\dim_{t}X\leq \dim_{t^nH}X$ by Fact \ref{f:ttt},
it is enough to show $\dim_{t^nH}X\leq \dim_t X$. The definition of the topological dimension yields that
$X$ has a basis $\mathcal{U}$ such that $\dim_t \partial U\leq \dim_t X-1<n-1$ for all
$U\in \mathcal{U}$. Then the inductive hypothesis implies that $\dim_{t^{n-1}H} \partial U=\dim_t \partial U$ for
all $U\in \mathcal{U}$, therefore
$$\dim_{t^nH}X\leq \sup_{U\in \mathcal{U}}\dim_{t^{n-1}H}\partial U+1=\sup_{U\in \mathcal{U}} \dim_{t}\partial U+1\leq \dim_t X.$$
This concludes the proof.
\end{proof}

Now we compare the values of different dimensions, for the next theorem see \cite{BBE}.

\begin{theorem} \label{t:<} For every metric space $X$
$$\dim_{t}X\leq \dim_{tH} X \leq \dim _{H} X.$$
\end{theorem}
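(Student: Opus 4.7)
The strategy is to dispatch the two inequalities separately.

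For $\dim_t X \le \dim_{tH} X$, I would proceed as in the proof of Fact \ref{f:ttt}: given any $d > \dim_{tH} X$, there is a basis $\mathcal{U}$ of $X$ with $\dim_H \partial U \le d - 1$ for every $U \in \mathcal{U}$, and since every metric space satisfies $\dim_t Y \le \dim_H Y$ by \cite[Thm.~VII~2.]{HW}, each boundary also has topological dimension at most $d - 1$. The recursive definition of $\dim_t$ then yields $\dim_t X \le d$, and letting $d \searrow \dim_{tH} X$ gives the claim.

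For the more substantive inequality $\dim_{tH} X \le \dim_H X$, the idea is to exhibit, at each $x \in X$, arbitrarily small open balls $U(x, r)$ whose boundaries satisfy $\dim_H \partial U(x, r) \le \dim_H X - 1$; the collection of all such balls is then a basis witnessing $\dim_{tH} X \le \dim_H X$. Since $\partial U(x, r) \subseteq S(x, r) := \{y \in X : d(x, y) = r\}$, it suffices to prove the slicing bound $\dim_H S(x, r) \le \dim_H X - 1$ for Lebesgue-a.e.~$r > 0$. To obtain this, fix $s = \dim_H X$ and $\alpha > 0$. Because $\mathcal{H}^{s+\alpha}(X) = 0$, for any $\delta > 0$ one can cover $X$ by sets $\{U_i\}$ with $\sum_i (\diam U_i)^{s+\alpha} < \delta$. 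For each $r$, the subfamily of those $U_i$ meeting $S(x, r)$ covers $S(x, r)$, and the set of radii $r$ for which $U_i$ meets $S(x, r)$ lies in an interval of length at most $\diam U_i$. A Fubini estimate then gives
\[
\int_0^\infty \mathcal{H}^{s+\alpha-1}_\infty\bigl(S(x, r)\bigr)\, dr \le \sum_i (\diam U_i)^{s+\alpha-1} \cdot \diam U_i = \sum_i (\diam U_i)^{s+\alpha} < \delta,
\]
and since $\delta > 0$ is arbitrary, $\mathcal{H}^{s+\alpha-1}_\infty(S(x, r)) = 0$ for a.e.~$r$. Intersecting over a sequence $\alpha_n \downarrow 0$ yields $\dim_H S(x, r) \le \dim_H X - 1$ for a.e.~$r > 0$, as desired.

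The main obstacle is the slicing estimate for the distance function: in a general metric space the coarea formula is unavailable, so one has to derive the Fubini bound directly from an almost-optimal Hausdorff-content cover, as sketched above. A minor subtlety is that the function $r \mapsto \mathcal{H}^{s+\alpha-1}_\infty(S(x, r))$ need not be Lebesgue measurable, but this is finessed by replacing the set $\{r : U_i \cap S(x, r) \ne \emptyset\}$ with the enclosing closed interval $[\,\inf_{y \in U_i} d(x,y),\ \sup_{y \in U_i} d(x,y)\,]$, thereby upper-bounding the integrand by an explicit measurable function.
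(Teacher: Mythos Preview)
The paper does not prove Theorem~\ref{t:<} in the text; it is quoted from \cite{BBE} (see the sentence immediately preceding the statement), so there is no in-paper argument to compare against.

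Your proof is correct. The first inequality is precisely the case $n=1$ of Fact~\ref{f:ttt}, already established above. For the second inequality, your Fubini-type slicing estimate on metric spheres is the standard route and is sound: dominating the (possibly non-measurable) function $r\mapsto\mathcal{H}^{s+\alpha-1}_\infty(S(x,r))$ by the measurable majorant $\sum_i(\diam U_i)^{s+\alpha-1}\mathbf{1}_{I_i}(r)$ shows that its upper Lebesgue integral is at most $\delta$ for every $\delta>0$, so the content vanishes for a.e.\ $r$, and intersecting over $\alpha_n\downarrow 0$ gives the sphere bound. One small point worth making explicit: when $\dim_H X<1$ the exponent $s+\alpha-1$ can be negative for small $\alpha$, but then the $1$-Lipschitz map $y\mapsto d(x,y)$ sends $X$ into a Lebesgue-null subset of $[0,\infty)$, so $S(x,r)=\emptyset$ for a.e.\ $r$ and the inequality $\dim_H S(x,r)\le\dim_H X-1$ holds trivially under the paper's convention $\dim_H\emptyset=-1$.
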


\begin{fact} \label{f:0} If $X$ is a metric space and $n\in \mathbb{N}$ then $\dim_{t^{n+1}H} X\leq \dim_{t^{n}H} X$.
\end{fact}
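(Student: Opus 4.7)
The plan is a straightforward induction on $n$, bootstrapping off Theorem \ref{t:<}.

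The base case $n=0$ is just the inequality $\dim_{tH} X \leq \dim_H X$, which is already established in Theorem \ref{t:<} (recalling that $\dim_{t^0 H} X = \dim_H X$ and $\dim_{t^1 H} X = \dim_{tH} X$).

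For the inductive step, assume that for some $n \geq 0$ the inequality $\dim_{t^{n+1}H} Y \leq \dim_{t^n H} Y$ holds for every metric space $Y$. Fix a metric space $X$ and an arbitrary $d > \dim_{t^{n+1}H} X$. By the definition of $\dim_{t^{n+1}H}$, the space $X$ admits a basis $\mathcal{U}$ such that $\dim_{t^n H}\partial U \leq d-1$ for every $U\in \mathcal{U}$. Applying the inductive hypothesis to each boundary $\partial U$ (which is itself a metric space) gives
$$\dim_{t^{n+1}H}\partial U \;\leq\; \dim_{t^n H}\partial U \;\leq\; d-1 \qquad \text{for every } U \in \mathcal{U}.$$
Hence this same basis $\mathcal{U}$ witnesses that $\dim_{t^{n+2}H} X \leq d$. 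Since $d > \dim_{t^{n+1}H} X$ was arbitrary, we obtain $\dim_{t^{n+2}H} X \leq \dim_{t^{n+1}H} X$, completing the induction.

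There is essentially no obstacle here: the only nontrivial ingredient is the base case, already supplied by Theorem \ref{t:<}, and the inductive step is pure monotonicity — a basis good enough to witness a bound on $\dim_{t^{n+1}H}$ is automatically good enough to witness the same bound on $\dim_{t^{n+2}H}$, once one knows the inequality between the $(n{+}1)$st and $n$th dimensions on the boundaries.
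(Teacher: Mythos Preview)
Your proof is correct and follows essentially the same approach as the paper's: induction on $n$ with the base case supplied by Theorem~\ref{t:<}, and the inductive step carried out by observing that any basis witnessing a bound on $\dim_{t^{n+1}H}X$ automatically witnesses the same bound on $\dim_{t^{n+2}H}X$ once the inductive hypothesis is applied to the boundaries. The paper's version phrases the inductive step slightly more tersely (``for every basis $\mathcal{U}$ of $X$ we have $\dim_{t^{n}H}\partial U\leq \dim_{t^{n-1}H}\partial U$, so the definitions give the result''), but the underlying argument is identical.
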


\begin{proof} If $n=0$ then this follows from Theorem \ref{t:<}. Let $n\geq 1$ and assume by induction that $\dim_{t^nH}Y\leq \dim_{t^{n-1}H}Y$ for all metric spaces $Y$. Hence for every basis $\mathcal{U}$ of $X$ we have $\dim_{t^{n}H}\partial U\leq \dim_{t^{n-1}H}\partial U$ for all $U\in \mathcal{U}$.
Thus $\dim_{t^{n+1}H} X\leq \dim_{t^{n}H} X$ easily follows from definition of inductive topological Hausdorff dimensions.
\end{proof}

\begin{theorem} \label{t:<n} If $X$ is a metric space and $n\in \mathbb{N}$ then
$$\dim_{t} X\leq \dim_{t^nH} X\leq \dim_{H} X.$$
\end{theorem}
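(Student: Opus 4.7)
The plan is to split the double inequality into its two halves and observe that both follow from results already in place. The left inequality $\dim_{t} X\leq \dim_{t^nH} X$ is exactly Fact~\ref{f:ttt}, so nothing further is needed there.

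For the right inequality $\dim_{t^nH} X\leq \dim_{H} X$, the strategy is to chain together monotonicity facts already available. Iterating Fact~\ref{f:0} produces the descending chain
$$\dim_{t^nH} X\leq \dim_{t^{n-1}H} X\leq \dots \leq \dim_{t^1H} X.$$
Unpacking the definition of the $1$st inductive topological Hausdorff dimension shows $\dim_{t^1H} X=\dim_{tH} X$, since by convention $\dim_{t^0H}\partial U=\dim_H \partial U$, so the condition defining $\dim_{t^1H}$ reduces verbatim to the one in Definition~\ref{deftoph}. Finally, Theorem~\ref{t:<} supplies $\dim_{tH} X\leq \dim_H X$, closing the chain. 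The boundary case $n=0$ is immediate because $\dim_{t^0H} X=\dim_H X$.

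There is no genuine obstacle here; the theorem is a bookkeeping consequence of Fact~\ref{f:ttt}, Fact~\ref{f:0}, and Theorem~\ref{t:<}. A direct induction on $n$ would also work: given a basis $\mathcal{U}$ of $X$ with $\dim_{t^{n-1}H}\partial U\leq d-1$, one could combine the inductive hypothesis $\dim_{t^{n-1}H}\partial U\leq \dim_H\partial U$ with the standard Hausdorff-dimensional bound on boundaries to conclude. However, the chaining argument above is cleaner and reuses the monotonicity step already isolated in Fact~\ref{f:0}, so that is the route I would take.
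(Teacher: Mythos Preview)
Your proof is correct and matches the paper's approach: the paper also invokes Fact~\ref{f:ttt} for the left inequality and, for the right, uses Fact~\ref{f:0} together with Theorem~\ref{t:<}, packaged as an induction on $n$ rather than your explicit chain, but the substance is identical. (Your closing aside about a ``direct induction'' via bases is a bit garbled---having $\dim_{t^{n-1}H}\partial U\le\dim_H\partial U$ does not by itself yield a basis with $\dim_H\partial U\le\dim_H X-1$---but since you correctly discard it in favor of the chaining argument, this does not affect the proof.)
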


\begin{proof} If $n=0$ or $n=1$ then we are done by Theorem \ref{t:<}. Let $n>1$ and assume by induction that the inequality holds for $n-1$.
Then Fact \ref{f:ttt}, Fact \ref{f:0} and the inductive hypothesis imply $\dim_t X \leq \dim_{t^nH}X\leq \dim_{t^{n-1}H}X\leq \dim_H X$.
\end{proof}

\begin{corollary}[Extension of the classical dimension] The $n$th inductive topological Hausdorff
dimension of a countable set equals zero, and for open subspaces of $\mathbb{R}^{d}$ and
for smooth $d$-dimensional manifolds this dimension equals $d$.
\end{corollary}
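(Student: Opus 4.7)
The plan is to deduce all three assertions as immediate consequences of the sandwich
$$\dim_t X \leq \dim_{t^nH} X \leq \dim_H X$$
provided by Theorem \ref{t:<n}. The strategy is simply to recall, for each of the three classes of spaces in the statement, the classical values of the outer two dimensions and to observe that they coincide, forcing the middle term to agree as well.

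First, for a non-empty countable metric space $X$ I would note that $\dim_H X = 0$, since $X$ is a countable union of singletons and each singleton has $s$-Hausdorff content zero for every $s>0$, while $\dim_t X\geq 0$ holds trivially for non-empty spaces. The sandwich then collapses to give $\dim_{t^nH} X = 0$. Next, for a non-empty open $U\subseteq \mathbb{R}^d$ the equalities $\dim_t U = \dim_H U = d$ are classical, so Theorem \ref{t:<n} immediately forces $\dim_{t^nH} U = d$. Finally, for a smooth $d$-dimensional manifold $M$ I would cover $M$ by a countable atlas whose charts are locally bi-Lipschitz to open subsets of $\mathbb{R}^d$, and then combine the local nature of $\dim_t$ with the bi-Lipschitz invariance and countable stability of $\dim_H$ to conclude $\dim_t M = \dim_H M = d$, whence $\dim_{t^nH} M = d$ as before.

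There is essentially no obstacle in this argument beyond recalling these classical equalities; the real content has already been proved in Theorem \ref{t:<n}, which asserts that every inductive topological Hausdorff dimension lies between the topological and Hausdorff dimensions. The corollary merely advertises the three most natural settings in which the two extremes of the sandwich already agree and thereby pin down every intermediate notion $\dim_{t^nH}$.
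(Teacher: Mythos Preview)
Your proposal is correct and matches the paper's intent: the corollary is stated without proof immediately after Theorem~\ref{t:<n}, and the sandwich $\dim_t X \leq \dim_{t^nH} X \leq \dim_H X$ together with the classical equalities $\dim_t = \dim_H$ for countable sets, open subsets of $\mathbb{R}^d$, and smooth $d$-manifolds is exactly the argument implied.
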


\begin{theorem}[Monotonicity]  If $X\subseteq Y$ are metric
spaces then $\dim_{t^nH} X \leq \dim_{t^nH} Y$ for all $n\in \mathbb{N}$.
\end{theorem}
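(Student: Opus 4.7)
The plan is to proceed by induction on $n$, mirroring the standard proof of monotonicity for the (small inductive) topological dimension.

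For the base case $n=0$, there is nothing to do: $\dim_{t^0H}=\dim_H$ is already monotone by the standard monotonicity of Hausdorff dimension (and the empty-set convention $\dim_H\emptyset=-1$ respects inclusion trivially). One may also dispose of the empty case $X=\emptyset$ once and for all, since then $\dim_{t^nH}X=-1$.

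For the inductive step, assume $n\geq 1$ and monotonicity holds for $n-1$. Fix a real $d>\dim_{t^nH}Y$. By definition, $Y$ has a basis $\mathcal{U}$ with $\dim_{t^{n-1}H}\partial_Y U\leq d-1$ for every $U\in\mathcal{U}$, where $\partial_Y$ denotes the boundary taken in $Y$. The standard candidate basis for $X$ is
\[
\mathcal{V}=\{U\cap X:U\in\mathcal{U}\},
\]
which is a basis for the subspace topology of $X$. The key topological lemma I will invoke (or verify in one line) is that for every $U\in\mathcal{U}$ and $V=U\cap X$,
\[
\partial_X V\subseteq \partial_Y U\cap X\subseteq \partial_Y U.
\]
Indeed, $V$ is open in $X$, so $\partial_X V=\cl_X V\setminus V\subseteq(\cl_Y U\cap X)\setminus U=(\cl_Y U\setminus U)\cap X=\partial_Y U\cap X$, using that $U$ is open in $Y$.

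Now apply the inductive hypothesis to the inclusion $\partial_X V\subseteq \partial_Y U$ to obtain
\[
\dim_{t^{n-1}H}\partial_X V\leq \dim_{t^{n-1}H}\partial_Y U\leq d-1.
\]
Since $\mathcal{V}$ is a basis of $X$ all of whose members have boundary of $(n-1)$th inductive topological Hausdorff dimension at most $d-1$, the definition of $\dim_{t^nH}$ yields $\dim_{t^nH}X\leq d$. Letting $d\searrow \dim_{t^nH}Y$ finishes the step.

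The only point worth any care is the boundary containment $\partial_X V\subseteq\partial_Y U$, which is the usual subspace-boundary identity for the trace of an open set; everything else is a direct unfolding of the inductive definition. There is no genuine obstacle beyond being careful about the conventions at $\emptyset$ and at the value $\infty$ (the convention $\infty-1=\infty$ already built into the definition keeps the argument valid when the dimensions are infinite).
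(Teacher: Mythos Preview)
Your proof is correct and follows essentially the same route as the paper's: induction on $n$, the base case via monotonicity of $\dim_H$, and the inductive step by tracing a basis of $Y$ to $X$ and using the subspace-boundary containment $\partial_X(U\cap X)\subseteq\partial_Y U$ together with the inductive hypothesis. The only cosmetic difference is that you phrase the step via an auxiliary $d>\dim_{t^nH}Y$ and a limit, whereas the paper concludes directly from the infimum in the definition; the content is identical.
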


\begin{proof} If $n=0$ then we are done. Let $n\geq 1$ and assume by induction that monotonicity holds for the $(n-1)$st inductive topological Hausdorff dimension. If $\mathcal{U}$ is a basis in $Y$ then $\mathcal{U}_{X}=\{U\cap X: U\in \mathcal{U}\}$ is
a basis in $X$ such that $\partial _{X} (U\cap X)\subseteq \partial_{Y} U$, thus $\dim_{t^{n-1}H} \partial _{X} (U\cap X)\leq \dim_{t^{n-1}H} \partial_{Y} U$  holds for all $U\in \mathcal{U}$. Therefore $\dim_{t^nH} X \leq \dim_{t^nH} Y$.
\end{proof}

\begin{theorem} Let $X,Y$ be metric spaces and $n\in \mathbb{N}$. If $f\colon X\to Y$ is a
Lipschitz homeomorphism then $\dim_{t^nH}Y\leq \dim_{t^nH}X$.
\end{theorem}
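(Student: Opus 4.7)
The natural approach is induction on $n$, mirroring the inductive definition of $\dim_{t^nH}$. The only structural facts we need are (a) a Lipschitz map does not increase Hausdorff dimension of its image, and (b) a homeomorphism sends bases to bases and preserves boundaries.

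For the base case $n=0$ we have $\dim_{t^0H}=\dim_H$, and since $f$ is Lipschitz and surjective (being a bijection), $\dim_H Y=\dim_H f(X)\leq \dim_H X$, which is the standard Lipschitz bound on Hausdorff dimension.

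For the inductive step, assume the statement holds for $n-1$ and fix any $d>\dim_{t^nH}X$. Then $X$ admits a basis $\mathcal{U}$ with $\dim_{t^{n-1}H}\partial_X U\leq d-1$ for every $U\in\mathcal{U}$. Since $f$ is a homeomorphism, $\mathcal{V}=\{f(U):U\in\mathcal{U}\}$ is a basis of $Y$, and moreover $\partial_Y f(U)=f(\partial_X U)$ because homeomorphisms commute with the topological operators $\inter$, $\cl$, and hence $\partial$. The restriction $f|_{\partial_X U}\colon \partial_X U\to \partial_Y f(U)$ is then again a Lipschitz homeomorphism, so by the inductive hypothesis
\[
\dim_{t^{n-1}H}\partial_Y f(U)\leq \dim_{t^{n-1}H}\partial_X U\leq d-1
\]
for every $U\in\mathcal{U}$. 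By the defining infimum this gives $\dim_{t^nH}Y\leq d$; letting $d\downarrow \dim_{t^nH}X$ finishes the induction.

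There is essentially no obstacle here: the proof is a clean transfer along the inductive definition, and the only genuinely analytic input is the base case, which is standard. One small point worth emphasizing in the write-up is that we do \emph{not} need $f^{-1}$ to be Lipschitz (i.e.\ we do not need bi-Lipschitz), because the inductive step only ever pushes the dimension estimate in the direction $X\to Y$ via the Lipschitz map $f$; the inverse is used only as a continuous map, to guarantee that $\mathcal{V}$ is a basis and that boundaries are preserved.
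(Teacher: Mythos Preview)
Your argument is correct and follows the same induction as the paper: the base case is the standard Lipschitz bound for Hausdorff dimension, and the inductive step transports a basis via the homeomorphism, uses $\partial f(U)=f(\partial U)$, and applies the hypothesis to the Lipschitz homeomorphism $f|_{\partial U}$. The only cosmetic difference is that you phrase the conclusion via a limit $d\downarrow \dim_{t^nH}X$, whereas the paper simply compares the two infima directly; your closing remark that $f^{-1}$ is used only as a continuous map is a worthwhile observation.
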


\begin{proof} If $n=0$ then we are done. Let $n\geq 1$ and assume by induction that the statement holds for $n-1$.
Since $f$ is a homeomorphism, if $\mathcal{U}$ is a basis in $X$ then
$\mathcal{V}=\{f(U): U\in \mathcal{U}\}$ is a basis in $Y$, and
 $\partial f(U)=f(\partial U)$ for all $U\in \mathcal{U}$.
As $f|_{\partial U}$ is also a Lipschitz homeomorphism, the inductive hypothesis implies that $\dim_{t^{n-1}H} \partial
V=\dim_{t^{n-1}H}\partial f(U)=\dim_{t^{n-1}H}f(\partial U)\leq \dim_{t^{n-1}H} \partial
U$ for all $V=f(U)\in \mathcal{V}$. Therefore $\dim_{t^nH} Y\leq \dim_{t^nH}X$.
\end{proof}

\begin{corollary}[Bi-Lipschitz invariance] \label{c:bi} Let $X,Y$ be metric spaces and $n\in \mathbb{N}$. If $f\colon X\to Y$ is
bi-Lipschitz then $\dim_{t^nH}X=\dim_{t^nH}Y$.
\end{corollary}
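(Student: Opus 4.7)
The plan is to derive this directly from the preceding theorem, which establishes that any Lipschitz homeomorphism can only decrease (weakly) the $n$th inductive topological Hausdorff dimension when viewed as a map from the domain to the codomain. A bi-Lipschitz map $f\colon X\to Y$ is, by definition, a bijection such that both $f$ and $f^{-1}$ are Lipschitz. In particular both $f$ and $f^{-1}$ are Lipschitz homeomorphisms, so the preceding theorem is applicable to each of them.

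Concretely, applying the previous theorem to $f\colon X\to Y$ yields $\dim_{t^nH}Y\leq \dim_{t^nH}X$. Applying it to $f^{-1}\colon Y\to X$, which is also a Lipschitz homeomorphism, yields the reverse inequality $\dim_{t^nH}X\leq \dim_{t^nH}Y$. Combining the two inequalities gives the desired equality. There is no real obstacle here; the proof is a one-line symmetric application of the previous theorem, and the work has already been done inductively in establishing that theorem.
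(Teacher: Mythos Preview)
Your proposal is correct and matches the paper's approach exactly: the corollary is stated without proof in the paper, as it follows immediately from the preceding theorem by applying it to both $f$ and $f^{-1}$.
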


\begin{theorem}[Countable stability for closed sets] \label{t:cs}
Let $X$ be a separable metric space with $X=\bigcup_{i=0}^{\infty} X_{i}$, where $X_{i}$
are closed subsets of $X$. Then for all $n\in \mathbb{N}$
$$\dim_{t^nH} X=\sup_{i\in \mathbb{N}} \dim_{t^nH}X_{i}.$$
\end{theorem}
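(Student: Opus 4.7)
The plan is to induct on $n$. The inequality $\sup_{i\in \mathbb{N}} \dim_{t^nH} X_i\leq \dim_{t^nH} X$ follows immediately from monotonicity, so only the reverse requires argument. The base case $n=0$ is the classical countable stability of the Hausdorff dimension, which in fact holds without any closedness assumption.

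For $n\geq 1$, I assume countable stability for $\dim_{t^{n-1}H}$ and fix $d>\sup_i \dim_{t^nH} X_i$. Each $X_i$, being a subspace of the separable space $X$, is second countable, so from any basis of $X_i$ witnessing $\dim_{t^nH} X_i<d$ I can extract a countable sub-basis $\mathcal{V}_i$ by a Lindel\"of argument; this $\mathcal{V}_i$ is still a basis of $X_i$ with $\dim_{t^{n-1}H}\partial_{X_i}V\leq d-1$ for each $V\in \mathcal{V}_i$. Each $\partial_{X_i}V$ is closed in $X_i$, hence closed in $X$ since $X_i$ is closed in $X$. Therefore
\[
B=\bigcup_{i\in \mathbb{N}}\bigcup_{V\in \mathcal{V}_i}\partial_{X_i}V
\]
is a countable union of closed subsets of $X$, each of $(n-1)$th inductive topological Hausdorff dimension at most $d-1$, and the inductive hypothesis yields $\dim_{t^{n-1}H}B\leq d-1$.

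It then suffices to produce a basis $\mathcal{U}$ of $X$ with $\partial_X U\subseteq B$ for every $U\in \mathcal{U}$; monotonicity then gives $\dim_{t^{n-1}H}\partial_X U\leq d-1$, so $\dim_{t^nH} X\leq d$, and letting $d$ decrease to $\sup_i \dim_{t^nH} X_i$ completes the induction. To build $U$ for a given $x\in X$ and open neighborhood $W\ni x$, I choose $i_0$ with $x\in X_{i_0}$, pick a small-diameter $V\in \mathcal{V}_{i_0}$ with $x\in V\subseteq W\cap X_{i_0}$, and extend $V$ to an open subset of $X$ by an iterative refinement: at each stage, boundary contributions picked up from intersection with another layer $X_j$ are absorbed into $B$ by adjusting along $X_j$ using $\mathcal{V}_j$. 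Paracompactness (Theorem~\ref{t:stone}) and the shrinking theorem (Theorem~\ref{t:shrinking}) allow these countably many adjustments to be organized coherently.

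The main obstacle is that $\partial_X U$ does not decompose cleanly as $\bigcup_i\partial_{X_i}(U\cap X_i)$: a point in $\partial_X U\cap X_i$ may be approachable only from $X\setminus X_i$, so a naive extension of $V$ can introduce boundary points lying outside every $\partial_{X_j}V'$. The iterative correction is designed precisely to absorb these transverse contributions, and Lemma~\ref{l:sep1}, by producing partitions that avoid prescribed zero-dimensional sets, is the natural tool for handling them whenever the bases $\mathcal{V}_j$ alone are insufficient.
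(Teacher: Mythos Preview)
Your setup through the definition of $B$ and the conclusion $\dim_{t^{n-1}H}B\leq d-1$ matches the paper exactly (the paper calls this set $Y$). The gap is in the last step: you never actually construct the basis element $U$ with $\partial_X U\subseteq B$. The ``iterative refinement'' is only a sketch, and you yourself identify the obstruction---points of $\partial_X U\cap X_j$ need not lie in $\partial_{X_j}(U\cap X_j)$---without resolving it. Invoking paracompactness, shrinkings, and Lemma~\ref{l:sep1} as tools ``whenever the bases $\mathcal{V}_j$ alone are insufficient'' is not a proof; it is not clear how any finite or countable sequence of adjustments along the $X_j$ would terminate with $\partial_X U\subseteq B$, since each adjustment can create new transverse boundary.

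The paper avoids this entirely with one observation you are missing: $\dim_t(X\setminus B)\leq 0$. Indeed, for each $i$ the basis $\mathcal{V}_i$ restricted to $X_i\setminus B$ has empty boundaries, so $\dim_t(X_i\setminus B)\leq 0$; since $X_i\setminus B$ is closed in $X\setminus B$, the countable sum theorem for topological dimension zero gives $\dim_t(X\setminus B)\leq 0$. Now a \emph{single} application of Lemma~\ref{l:sep1} with $Z=X\setminus B$ produces, for any $x\in V$, a partition $L$ between $\{x\}$ and $X\setminus V$ with $L\cap(X\setminus B)=\emptyset$, i.e.\ $L\subseteq B$; the resulting $U$ then has $\partial_X U\subseteq L\subseteq B$. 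This replaces your open-ended iteration with one clean step.
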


\begin{proof} If $n=0$ then we are done by the countable stability of the Hausdorff dimension.
Let $n\geq 1$ and assume by induction that the statement holds for $n-1$.

Monotonicity clearly implies $\dim_{t^nH}X \geq \sup_{i\in \mathbb{N}} \dim_{t^nH} X_{i}$.
For the other direction we may assume $\sup_{i\in \mathbb{N}}\dim_{t^nH}X_{i}<\infty$. Let $d>\sup_{i\in \mathbb{N}} \dim_{t^nH} X_{i}$ be arbitrary. Let
$\mathcal{U}_{i}$ be a countable basis of $X_{i}$
such that $\dim_{t^{n-1}H} \partial_{X_i} U\leq d-1$ for all $i\in \mathbb{N}$ and $U\in \mathcal{U}_{i}$.

Let $Y=\bigcup \{\partial_{X_i} U : i\in \mathbb{N}, \, U \in
\mathcal{U}_{i}\}$. The countable stability of the $(n-1)$st inductive topological
Hausdorff dimension for closed sets implies $\dim_{t^{n-1}H} Y \leq d-1$. The definition of the
topological dimension yields $\dim_{t} (X_{i}\setminus Y)=0$ for
all $i\in \mathbb{N}$. Then $X_{i}\setminus Y$ is a closed subspace of the
separable metric space $X\setminus Y$, and $X\setminus
Y=\bigcup_{i\in \mathbb{N}} (X_{i}\setminus Y)$.
The countable stability of the topological dimension zero for closed sets
\cite[1.3.1. Thm.]{E} yields $\dim_{t} (X\setminus Y)=0$.

Let us fix an open set $V\subseteq X$ and a point $x\in V$. As
$X\setminus Y$ is a separable subspace of $X$ with $\dim_{t} (X\setminus Y)=0$,
Lemma \ref{l:sep1} yields that there is a partition $L$ between
$\{x\}$ and $X\setminus V$ with $L\subseteq Y$. Thus there exist disjoint open sets $
U,U'\subseteq X$ such that $x\in U$, $X\setminus V\subseteq U'$ and
$U\cup U'=X\setminus L$. In particular, $x \in U
\subseteq V$. Moreover, $\partial_X U\subseteq L\subseteq Y$, thus $\dim_{t^{n-1}H} \partial_X U\leq
\dim_{t^{n-1}H} Y\leq d-1$. Therefore the definition of the $n$th inductive topological Hausdorff dimension implies
$\dim_{t^nH} X\leq d$. As $d>\sup_{i\in \mathbb{N}} \dim_{t^nH} X_{i}$ was arbitrary, the proof is complete.
\end{proof}

\section{Equivalent definitions of inductive topological Hausdorff dimensions} \label{s:equiv}

The aim of this section is to give some equivalent definitions which play a crucial role in the proof of the Main Theorem.

Let $X$ be a separable metric space and $n\in \mathbb{N}^+$. If $\dim_t X<n$ then Fact \ref{f:1} yields $\dim_{t^nH} X=\dim_t X$, so the $n$th inductive topological Hausdorff dimension is reduced to the well-known topological dimension. Hence from now on we can restrict our attention to the case $\dim_t X\geq n$ .

\begin{notation} If $\mathcal{A}$ is a family of sets and $m\in \mathbb{N}^+$ then let $T_{m}(\mathcal{A})$
denote the set of points covered by at least $m$ members of $\mathcal{A}$.
\end{notation}

For a motivation let us repeat the definition of the topological dimension and recall three of its equivalent definitions,
for some details consult \cite{E}.

\begin{theorem} \label{t:topeq2} If $X$ is a non-empty separable metric space then
\begin{align*} \dim_t X=\min\{&n: X \textrm{ has a basis } \mathcal{U} \textrm{ such that }
\dim_{t} \partial {U} \leq n-1 \textrm{ for every } U\in \mathcal{U}\} \\
=\min \{&n: \exists A\subseteq X \textrm{ such that } \dim_t A\leq 0 \textrm{ and } \dim_t (X\setminus A)\leq n-1\} \\
= \min\{&n: \forall (A_1,B_1),\dots,(A_{n+1},B_{n+1}) \textrm{ pairs of disjoint closed subsets } \\
&\textrm{of } X \ \exists \, \textrm{partitions } L_i \textrm{ between } A_i \textrm{ and } B_i \textrm{ such that } \cap_{i=1}^{n+1} L_i=\emptyset\} \\
=\min\{&n:  \forall \, \textrm{finite open cover } \mathcal{U} \textrm{ of } X\ \exists \, \textrm{a finite open refinement } \mathcal{V}  \textrm{ of } \mathcal{U} \\
&\textrm{such that } T_{n+2}(\mathcal{V})=\emptyset\}.
\end{align*}
\end{theorem}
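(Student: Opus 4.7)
The plan is to label the four right-hand sides of the displayed formula as $(D_1),(D_2),(D_3),(D_4)$ and prove the cycle $(D_1)\Rightarrow(D_2)\Rightarrow(D_3)\Rightarrow(D_4)\Rightarrow(D_1)$, so that all four minima coincide. The equivalence $(D_1)\Leftrightarrow(D_2)$ is immediate from Theorem \ref{t:topeq}, which was recalled in the Preliminaries.

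For $(D_2)\Rightarrow(D_3)$ I would invoke the strongest form of the decomposition, namely the equivalence of (i) and (iii) in Theorem \ref{t:topeq}, to write $X=Z_1\cup\cdots\cup Z_{n+1}$ with $\dim_t Z_i\leq 0$ for each $i$. Given $n+1$ pairs $(A_i,B_i)$ of disjoint closed subsets of $X$, apply Lemma \ref{l:sep1} to $(A_i,B_i)$ with $Z:=Z_i$ to obtain a partition $L_i$ between $A_i$ and $B_i$ that misses $Z_i$. Then
\[
\bigcap_{i=1}^{n+1}L_i\ \subseteq\ X\setminus\bigcup_{i=1}^{n+1}Z_i\ =\ \emptyset,
\]
which is exactly $(D_3)$.

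For $(D_3)\Rightarrow(D_4)$ I would start with a finite open cover $\mathcal{U}=\{U_1,\ldots,U_k\}$, apply Theorem \ref{t:shrinking} twice to obtain closed shrinkings $F'_i\subseteq\inter F_i\subseteq F_i\subseteq U_i$, and then use $(D_3)$ to extract partitions $L_i$ between $F'_i$ and $X\setminus\inter F_i$ whose $(n+1)$-fold intersections all vanish. A combinatorial recombination of the components of $X\setminus\bigcup_i L_i$ then yields an open refinement $\mathcal{V}$ of $\mathcal{U}$ with $T_{n+2}(\mathcal{V})=\emptyset$. For $(D_4)\Rightarrow(D_1)$ I would argue by induction on $n$: given $x\in V$ open, apply $(D_4)$ to a two-element open cover separating $x$ from $X\setminus V$ to obtain a refinement $\mathcal{V}$ of order $\leq n+1$, and set $U=\St(x,\mathcal{V})$. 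Then $x\in U\subseteq V$, and the family $\{W\cap \partial U : W\in\mathcal{V}\}$ shows that $\partial U$ satisfies $(D_4)$ with $n$ replaced by $n-1$, so $\dim_t\partial U\leq n-1$ by the inductive hypothesis. Ranging $V$ over a basis of neighborhoods of $x$ gives the basis required for $(D_1)$.

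The step I expect to be hardest is $(D_3)\Rightarrow(D_4)$, since translating a statement about vanishing intersections of partitions into a refinement of controlled multiplicity requires the classical Eilenberg--Otto combinatorics developed in \cite[Ch.~1]{E}. Since the present theorem is a compilation of standard characterizations of $\dim_t$, the point of sketching it here is principally to fix a template for the analogous (new) equivalences that the next subsection must establish for $\dim_{t^nH}$.
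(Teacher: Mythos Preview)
The paper does not prove Theorem~\ref{t:topeq2}: it is stated as a known compilation of classical characterizations of $\dim_t$ with the remark ``for some details consult \cite{E}.'' So there is no in-paper proof to compare your sketch against; the theorem serves only as motivation for the genuinely new Theorem~\ref{t:equiv}.

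That said, your sketch of $(D_4)\Rightarrow(D_1)$ has a real gap. Exhibiting the single cover $\{W\cap\partial U:W\in\mathcal{V}\}$ of $\partial U$, even if it had order $\le n$ (which itself is not obvious: a point $p\in\partial U$ lies in no member of $\mathcal{V}$ containing $x$, but nothing prevents it from lying in $n+1$ members not containing $x$), does \emph{not} show that $\partial U$ satisfies $(D_4)$ with $n$ replaced by $n-1$. Property $(D_4)$ demands that \emph{every} finite open cover of $\partial U$ admit a refinement of order $\le n$, and your cover refines only the trace of the original two-element cover of $X$. The correct argument---carried out in detail in the paper for the Hausdorff analogue in Lemma~\ref{l:cntn}---requires a whole sequence of locally finite covers $\mathcal{U}_i$ of $X$ with $\mesh\mathcal{U}_i\to 0$ and controlled order; from these one builds a partition $L$ between $\{x\}$ and $X\setminus V$ and checks that the induced covers $\mathcal{W}_i$ of $L$ form a basis-generating refining sequence of order $\le n$, which via Lemma~\ref{l:shr} (or its classical counterpart) yields $(D_4)$ for $L$ with $n-1$.

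Your $(D_3)\Rightarrow(D_4)$ is also underspecified: when the cover has $k>n+1$ members, a single invocation of $(D_3)$ only handles $n+1$ pairs, and repeated invocations would return different partitions for the same index. The route the paper takes for the analogue (Lemma~\ref{l:sncn} combined with the reduction $(v)\Leftrightarrow(ii)$ of Lemma~\ref{l:shr}) is to first reduce to covers of exactly $n+1$ elements and then apply the partition property once.
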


From now on we assume that $n\in \mathbb{N}^+$ and $X$ is a separable metric space with $\dim_t X\geq n$.

\begin{definition} \label{d:equiv} Let
\begin{align*}
P_{t^nH}=\{&d: X \textrm{ has a basis } \mathcal{U} \textrm{ such that }
\dim_{t^{n-1}H} \partial {U} \leq d-1 \textrm{ for every } U\in \mathcal{U}\}, \\
P_{d^nH}=\{&d\geq n: \exists A\subseteq X \textrm{ such that } \dim_H A\leq d-n \textrm{ and } \dim_t (X\setminus A)\leq n-1\}, \\
P_{p^nH}=\{&d\geq n: \forall (A_1,B_1),\dots,(A_n,B_n) \textrm{ pairs of disjoint closed subsets of } X \\
&\exists \, \textrm{partitions } L_i \textrm{ between } A_i \textrm{ and } B_i \textrm{ such that } \dim_H \left(\cap_{i=1}^{n} L_i \right)\leq d-n\}, \\
P_{c^nH}=\{&d\geq n: \forall \varepsilon>0 \ \forall \, \textrm{finite open cover } \mathcal{U} \textrm{ of } X\ \exists \, \textrm{a finite open refinement }\\
&\mathcal{V}  \textrm{ of } \mathcal{U} \textrm{ such that }\mathcal{H}_{\infty}^{d-n+\varepsilon}(T_{n+1}(\mathcal{V}))\leq \varepsilon\}.
\end{align*}
We assume $\infty\in P_{t^nH},P_{d^nH},P_{p^nH},P_{c^nH}$. In the above notation the letter $H$ refers to Hausdorff, while $t,d,p,c$ come from the first letters of the words topological, decomposition, partition and covering, respectively.
\end{definition}

The goal of this section is to prove the following theorem. This implies that the infimum is attained
in the definition of inductive topological Hausdorff dimensions and also yields three equivalent definitions
similarly to Theorem \ref{t:topeq2}.

\begin{theorem}\label{t:equiv} If $n\in \mathbb{N}^+$ and $X$ is a separable metric space with $\dim_t X\geq n$ then
$$\dim_{t^nH} X=\min P_{t^nH}=\min P_{d^nH}=\min P_{p^nH}=\min P_{c^nH}.$$
\end{theorem}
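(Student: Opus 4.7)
The plan is to prove the four equalities by induction on $n$, following the classical cycle of equivalent definitions of topological dimension (Theorem~\ref{t:topeq2}) while refining each step to carry Hausdorff-content information. The base case $n=1$ collapses to the analogous characterization of the ordinary topological Hausdorff dimension, for which the argument in \cite{BBE} applies; attainment of the minimum in $P_{d^1H}$ is handled by replacing a minimizing sequence of witnesses $A_k$ by their $G_\delta$ hulls (available by the regularity remark following Fact~\ref{f:equiv}), intersecting, and invoking the countable sum theorem for the topological dimension zero on the resulting $F_\sigma$ complement.

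For the inductive step I would assume the theorem at level $n-1$ and establish the cycle
\[
\dim_{t^nH} X \;\geq\; \min P_{d^nH} \;\geq\; \min P_{p^nH} \;\geq\; \min P_{c^nH} \;\geq\; \min P_{d^nH} \;\geq\; \dim_{t^nH} X,
\]
which combined with $\dim_{t^nH} X = \inf P_{t^nH}$ pins the four infima to a common attained value. The first inequality follows by extracting a countable subfamily $\{U_k\}$ of a basis witnessing $\dim_{t^nH} X < d$, decomposing each $\partial U_k = A_k \cup B_k$ via the inductive hypothesis (so $\dim_H A_k \leq d-n$ and $\dim_t B_k \leq n-2$), setting $A = \bigcup_k A_k$, and observing that $X \setminus \bigcup_k \partial U_k$ has $\dim_t \leq 0$ exactly as in the proof of Theorem~\ref{t:cs}; the classical addition theorem then yields $\dim_t(X \setminus A) \leq n-1$. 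The second inequality writes $B = Z_1 \cup \dots \cup Z_n$ via Theorem~\ref{t:topeq}(iii) and, given $n$ pairs of disjoint closed sets, uses Lemma~\ref{l:sep1} once per $Z_i$ to produce partitions $L_i$ with $L_i \cap Z_i = \emptyset$; a direct set-theoretic check shows $\bigcap_i L_i \subseteq A$. The third inequality adapts the classical partition-to-covering argument using the paracompactness and shrinking machinery of Theorems~\ref{t:stone} and~\ref{t:shrinking}, converting the Hausdorff dimension bound on $\bigcap_i L_i$ into a Hausdorff content bound on $T_{n+1}(\mathcal V)$ via Fact~\ref{f:equiv}. The last inequality, which closes the loop, runs as follows: with $X = A \cup B$ and $B = Z_1 \cup \dots \cup Z_n$, Lemma~\ref{l:sep1} applied to $Z_n$ yields, for every $x$ and every neighborhood $V$, an open $U$ with $x \in U \subseteq V$ and $\partial U \cap Z_n = \emptyset$, so $\partial U \setminus A \subseteq Z_1 \cup \dots \cup Z_{n-1}$ has $\dim_t \leq n-2$ while $\partial U \cap A$ has $\dim_H \leq d-n$; the inductive hypothesis then gives $\dim_{t^{n-1}H} \partial U \leq d-1$, witnessing $\dim_{t^nH} X \leq d$.

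The main obstacle is the step $\min P_{c^nH} \geq \min P_{d^nH}$. Here I would pick, for each $k$, a refinement $\mathcal V_k$ of a cover of mesh $< 1/k$ with $\mathcal H_\infty^{d-n+2^{-k}}(T_{n+1}(\mathcal V_k)) \leq 2^{-k}$, set $A = \bigcup_k T_{n+1}(\mathcal V_k)$, and verify via Fact~\ref{f:H^s}, Fact~\ref{f:equiv}, and countable subadditivity that $\dim_H A \leq d-n$ by summing a geometric tail against a fixed exponent $s > d-n$. The subtlety is then in showing $\dim_t(X \setminus A) \leq n-1$: the restrictions $\mathcal V_k|_{X \setminus A}$ are open covers of $X \setminus A$ of mesh $< 1/k$ and order $\leq n$, and one concludes via the covering characterization of topological dimension for separable metric spaces, which is equivalent to the existence of such covers of arbitrarily small mesh. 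A secondary, recurring technical point is the attainment of the minimum in $P_{d^nH}$ throughout the cycle, which I would handle uniformly by the $G_\delta$-hull trick together with the countable sum theorem for $\dim_t$ on $F_\sigma$ sets.
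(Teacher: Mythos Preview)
Your overall strategy---a cycle of inequalities among the four quantities, established by induction on $n$---matches the paper's, and your steps corresponding to $P_{t^nH}\to P_{d^nH}\to P_{p^nH}\to P_{c^nH}$ and to the attainment of $\min P_{d^nH}$ via $G_\delta$ hulls are essentially the paper's Lemmas~\ref{l:inf=min} and \ref{l:tndn}--\ref{l:sncn}. Your step ``$\min P_{d^nH}\ge \dim_{t^nH}X$'' via Lemma~\ref{l:sep1} and the inductive hypothesis is also correct and pleasant.

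The genuine gap is in your step ``$\min P_{c^nH}\ge \min P_{d^nH}$''. Setting $A=\bigcup_k T_{n+1}(\mathcal V_k)$ does \emph{not} give $\dim_H A\le d-n$: a single bound $\mathcal H_\infty^{d-n+2^{-k}}(T_{n+1}(\mathcal V_k))\le 2^{-k}$ says nothing about $\dim_H T_{n+1}(\mathcal V_k)$, which may equal $\dim_H X$. Concretely, take $X=[0,1]$, $n=1$, $d=1$: one has $1\in P_{c^1H}$, yet for $k=1$ the refinement $\mathcal V_1=\{[0,0.6),(0.4,1]\}$ satisfies $\mathcal H_\infty^{1/2}(T_2(\mathcal V_1))=(0.2)^{1/2}<1/2$ while $T_2(\mathcal V_1)=(0.4,0.6)$ is an interval, so $A$ contains an interval and $\dim_H A=1>0=d-n$. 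Your ``geometric tail'' bounds only $\sum_{k\ge K}\mathcal H_\infty^s(T_{n+1}(\mathcal V_k))$; the finitely many early terms are uncontrolled, and even a finite bound on $\mathcal H_\infty^s(A)$ would not give $\dim_H A\le d-n$. Replacing the union by $A=\limsup_k T_{n+1}(\mathcal V_k)$ repairs the Hausdorff estimate, but then $X\setminus A=\bigcup_m Y_m$ with $Y_m=\bigcap_{k\ge m}(X\setminus T_{n+1}(\mathcal V_k))$, and you must still argue $\dim_t Y_m\le n-1$ from the mere existence of open covers of $Y_m$ of arbitrarily small mesh and order $\le n$; this is true for separable metric spaces but is itself nontrivial (it is essentially the content of the implication $(vii)\Rightarrow(iv)$ in Lemma~\ref{l:shr}, which relies on Lemma~\ref{l:411}).

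The paper sidesteps this entirely: it never passes from $P_{c^nH}$ back to $P_{d^nH}$, but closes the cycle with $P_{c^nH}\subseteq P_{t^nH}$ directly (Lemma~\ref{l:cntn}). Given $d\in P_{c^nH}$, it invokes Lemma~\ref{l:shr}(vi) to produce a nested sequence of locally finite covers $\mathcal U_i$ and, for each $x\in X$ and neighbourhood $V_0$, builds from these a partition $L$ between $\{x\}$ and $X\setminus V_0$ together with nested covers $\mathcal W_i$ of $L$ witnessing $d-1\in P_{c^{n-1}H}(L)$ (via Lemma~\ref{l:shr}(vii)); the inductive hypothesis then gives $\dim_{t^{n-1}H}L\le d-1$. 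This construction is where the real work lies, and your sketch does not supply an equivalent.
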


Theorem \ref{t:equiv} easily follows from Lemma \ref{l:inf=min} and Theorem \ref{t:equiv2} below.

\begin{lemma} \label{l:inf=min} $\inf P_{d^nH}\in P_{d^nH}$.
\end{lemma}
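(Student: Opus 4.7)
The plan is to realize $d^* := \inf P_{d^nH}$ as an attained value by intersecting a sequence of witnesses whose parameters tend down to $d^*$. If $d^* = \infty$ there is nothing to prove (by the convention $\infty\in P_{d^nH}$), so I assume $d^* < \infty$.

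First I would fix, for each $i\in\mathbb{N}^+$, a value $d_i\in P_{d^nH}$ with $d_i < d^* + 1/i$, together with a witness $A_i\subseteq X$ such that $\dim_H A_i \le d_i - n$ and $\dim_t(X\setminus A_i)\le n-1$. Using the fact stated in the Preliminaries that every set is contained in a $G_\delta$ set of the same Hausdorff dimension, I replace each $A_i$ by a $G_\delta$ superset without changing $\dim_H A_i$; this superset still witnesses $d_i\in P_{d^nH}$ because shrinking $X\setminus A_i$ can only lower the topological dimension by monotonicity. The effect of this step is to make each complement $X\setminus A_i$ an $F_\sigma$ set in $X$.

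Next I would set $A := \bigcap_{i=1}^\infty A_i$. By monotonicity of Hausdorff dimension, $\dim_H A \le \dim_H A_i \le d_i - n < d^* - n + 1/i$ for every $i$, hence $\dim_H A \le d^* - n$. It remains to check that $\dim_t(X\setminus A) \le n-1$. Writing $X\setminus A = \bigcup_i (X\setminus A_i)$ and expanding each $F_\sigma$ complement as a countable union of sets closed in $X$, I obtain a countable family $\{F_j\}$ of closed subsets of $X$ with $X\setminus A = \bigcup_j F_j$ and $\dim_t F_j \le n-1$ for every $j$ (by monotonicity, since each $F_j$ is contained in some $X\setminus A_{i(j)}$). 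Each $F_j$ is then also closed in $X\setminus A$, so the countable stability of topological dimension for closed subsets of a separable metric space (\cite[1.5.3~Thm]{E}, as invoked at the end of Theorem~\ref{t:cs}) yields $\dim_t(X\setminus A) \le n-1$. Therefore $A$ witnesses that $d^* \in P_{d^nH}$.

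The only subtle step is the second one: the naive complement $X\setminus A_i$ of the chosen witness need not be an $F_\sigma$, so the countable-union argument for topological dimension cannot be applied directly. Passing to a $G_\delta$ superset (allowed because it preserves Hausdorff dimension) is what converts each complement into an $F_\sigma$, and this is really the whole content of the lemma; everything else is monotonicity plus countable stability of topological dimension zero for closed sets, already used in the proof of Theorem~\ref{t:cs}.
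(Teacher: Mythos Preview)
Your proof is correct and follows essentially the same route as the paper: pick witnesses $A_i$ for parameters tending to the infimum, replace them by $G_\delta$ hulls to make the complements $F_\sigma$, intersect, and apply countable stability of topological dimension to the union of complements. The only cosmetic differences are that the paper takes $d_i = d^* + 1/i$ directly (using that $P_{d^nH}$ is upward closed) and cites \cite[1.5.4.~Corollary]{E} for $F_\sigma$ sets rather than unpacking it into closed pieces; also, your parenthetical reference to Theorem~\ref{t:cs} actually points to the zero-dimensional case \cite[1.3.1.~Thm.]{E}, whereas here one needs the general \cite[1.5.3.~Thm.]{E}.
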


\begin{proof} Let $d=\inf P_{d^nH}$, we may assume $d<\infty$. Set $d_i=d+1/i$
for all $i\in \mathbb{N}^+$. As $d_i\in P_{d^nH}$, there exist sets $A_i\subseteq X$
such that $\dim_H A_i\leq d_i-n$ and $\dim_t(X\setminus A_i)\leq n-1$. We may
assume that the sets $A_i$ are $G_{\delta}$, since we can take $G_{\delta}$
hulls with the same Hausdorff dimension. Let $A=\bigcap_{i=1}^{\infty} A_i$,
then clearly $\dim_H A\leq d-n$.  As $X\setminus A_i$ are $F_\sigma$ sets such
that $\dim_t(X\setminus A_i) \leq n-1$ and $X\setminus A\subseteq
\bigcup_{i=1}^{\infty} (X\setminus A_i)$, monotonicity and
countable stability of the topological dimension for $F_{\sigma}$ sets
\cite[1.5.4.~Corollary]{E} yield $\dim_t (X\setminus A)\leq n-1$. Hence $d\in P_{d^nH}$.
\end{proof}

\begin{theorem} \label{t:equiv2} If $n\in \mathbb{N}^+$ and $X$ is a separable metric space with $\dim_t X\geq n$ then
$$P_{t^nH}=P_{d^nH}=P_{p^nH}=P_{c^nH}.$$
\end{theorem}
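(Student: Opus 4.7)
The plan is to establish the cyclic chain of inclusions $P_{t^nH}\subseteq P_{d^nH}\subseteq P_{p^nH}\subseteq P_{c^nH}\subseteq P_{t^nH}$, proceeding by induction on $n$. The base case $n=1$ (the topological Hausdorff dimension) is handled by adapting the arguments from \cite{BBE}; for the inductive step we freely invoke the case $n-1$ of Theorem \ref{t:equiv}, in particular the equivalence $P_{t^{n-1}H}=P_{d^{n-1}H}$ together with the attainment of the infimum from Lemma \ref{l:inf=min}. We may assume $d<\infty$ throughout.

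For $P_{t^nH}\subseteq P_{d^nH}$, take a countable basis $\mathcal{U}$ with $\dim_{t^{n-1}H}\partial U\leq d-1$ for every $U\in\mathcal{U}$. Either $\dim_t\partial U\leq n-2$ already (via Fact \ref{f:1}) or the inductive hypothesis provides a decomposition $\partial U=A_U\cup C_U$ with $\dim_H A_U\leq d-n$ and $\dim_t C_U\leq n-2$; by Lemma \ref{l:inf=min} we may take $A_U$ to be $G_\delta$ in $\partial U$, so $C_U$ is $F_\sigma$ in $X$. Set $A=\bigcup_U A_U$. Countable stability of $\dim_H$ gives $\dim_H A\leq d-n$. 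Since the basis elements restrict to clopen sets of $X\setminus\bigcup_U\partial U$, this complement is zero-dimensional, while countable stability of $\dim_t$ for $F_\sigma$ sets \cite[1.5.4]{E} yields $\dim_t\bigcup_U C_U\leq n-2$; Theorem \ref{t:topeq} then gives $\dim_t(X\setminus A)\leq n-1$. For $P_{d^nH}\subseteq P_{p^nH}$, write $X\setminus A=Z_1\cup\cdots\cup Z_n$ with $\dim_t Z_i\leq 0$ via Theorem \ref{t:topeq}, and for the $i$-th pair $(A_i,B_i)$ apply Lemma \ref{l:sep1} to the separable $Z_i$ to obtain a partition $L_i$ between $A_i$ and $B_i$ with $L_i\cap Z_i=\emptyset$; then $\bigcap_{i=1}^n L_i\subseteq X\setminus\bigcup_i Z_i=A$, so $\dim_H\bigcap L_i\leq d-n$.

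For $P_{p^nH}\subseteq P_{c^nH}$, I follow the classical route from partitions to covers (in the style of \cite[1.6.9--1.6.11]{E}). Given a finite open cover $\mathcal{U}=\{U_1,\dots,U_k\}$ and $\varepsilon>0$, use Theorem \ref{t:shrinking} to produce a closed shrinking $\{F_1,\dots,F_k\}$, and construct $n$ pairs of disjoint closed sets out of the shrinking to which the partition property applies, yielding partitions $L_1,\dots,L_n$ with $\dim_H\bigcap L_i\leq d-n$. From these partitions, assemble an open refinement $\mathcal{V}$ whose $(n+1)$-fold overlap satisfies $T_{n+1}(\mathcal{V})\subseteq\bigcap_{i=1}^n L_i$; by passing to a further locally finite open refinement of small mesh (via Theorem \ref{t:stone}) and applying Fact \ref{f:equiv}, the inclusion translates to $\mathcal{H}^{d-n+\varepsilon}_\infty(T_{n+1}(\mathcal{V}))\leq\varepsilon$, as required.

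The fourth inclusion $P_{c^nH}\subseteq P_{t^nH}$ is the technical heart of the argument and is where I expect the main difficulty. Given a point $x$ and an open neighborhood $V$, I plan to construct an open $W$ with $x\in W\subseteq V$ whose boundary satisfies $\dim_{t^{n-1}H}\partial W\leq d-1$ by iterating the covering property on covers of the form $\{V,X\setminus\overline{V'}\}$ (with $\overline{V'}\subseteq V$) and parameters $\varepsilon_i=1/i$ and rapidly decreasing mesh, then diagonally selecting elements containing $x$. The boundary $\partial W$ lies in a countable union of overlap sets, which splits as a piece contained in $\bigcup_i T_{n+1}(\mathcal{V}_i)$ — of Hausdorff dimension $\leq d-n$ by the covering control plus Fact \ref{f:equiv} — and a piece of topological dimension $\leq n-2$ coming from the lower-multiplicity overlaps, using the classical translation from covering to dimension. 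Applying the inductive hypothesis in the form $P_{d^{n-1}H}=P_{t^{n-1}H}$ to this decomposition converts it into the bound $\dim_{t^{n-1}H}\partial W\leq d-1$. The main obstacle is this last step: producing a geometric construction of $W$ whose boundary admits such a clean decomposition into a Hausdorff-small part and a topologically $(n{-}2)$-dimensional part, uniformly enough to feed into the inductive hypothesis. This is precisely where the machinery of topological dimension theory (paracompactness, shrinkings, multiplicity of refinements) must be strengthened from its classical integer-valued form to control Hausdorff content.
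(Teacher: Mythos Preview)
Your first two inclusions are essentially the paper's Lemmas~\ref{l:tndn} and the one after it, done in the same spirit. For $P_{p^nH}\subseteq P_{c^nH}$ you skip a nontrivial point: the definition of $P_{c^nH}$ quantifies over \emph{all} finite open covers, whereas partitions give you only $n$ closed pairs. The paper handles this by first proving Lemma~\ref{l:shr}, which reduces the covering condition to the case of an $(n{+}1)$-element cover; your sketch would need something equivalent. Also, one cannot achieve $T_{n+1}(\mathcal{V})\subseteq\bigcap_i L_i$ while keeping $\mathcal{V}$ a cover, since the partitions $L_i$ are closed and $\bigcap L_i$ need not be covered by the $V_i$'s otherwise; one must enlarge to an open $U\supseteq\bigcap L_i$ with $\mathcal{H}^{d-n+\varepsilon}_\infty(U)\le\varepsilon$ and get $T_{n+1}(\mathcal{V})\subseteq U$ instead.

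The real gap is in $P_{c^nH}\subseteq P_{t^nH}$, and it is an off-by-one. From covers $\mathcal{V}_i$ of $X$ with small $T_{n+1}(\mathcal{V}_i)$ you cannot conclude that the ``low-multiplicity'' part of $\partial W$ has $\dim_t\le n-2$: the classical translation from order of covers to topological dimension says that covers of order $\le n$ on a set witness $\dim_t\le n-1$, not $n-2$. To get $n-2$ you would need covers of $\partial W$ with small $T_n$, not small $T_{n+1}$, and nothing in your diagonal selection provides that drop. (Incidentally, $\bigcup_i T_{n+1}(\mathcal{V}_i)$ need not have $\dim_H\le d-n$ either; each piece has $\dim_H\le d-n+1/i$, so the union can have dimension up to $d-n+1$. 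A $\limsup$ fixes this part, but not the topological part.) The paper's key mechanism is precisely this order reduction: it builds the partition $L$ via sets $G_i,H_i$ so that $L\subseteq G_i\cap H_i$, and defines covers $\mathcal{W}_i$ of $L$ using only the $H_i$-pieces of $\mathcal{U}_i$. Since every point of $L$ also lies in some $G_i$-piece, one gets $T_n(\mathcal{W}_i)\subseteq T_{n+1}(\mathcal{U}_i)$, which is exactly the covering property at level $n-1$ on $L$; then the inductive hypothesis in the form $P_{c^{n-1}H}(L)\subseteq P_{t^{n-1}H}(L)$ finishes. Your decomposition route via $P_{d^{n-1}H}$ could in principle be made to work, but only after you have this same order-dropping construction on $\partial W$; without it the claimed $\dim_t\le n-2$ bound is unfounded.
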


Before proving Theorem \ref{t:equiv2} we need some preparation.

\begin{notation} If $\mathcal{A}$ is a family of sets and $A\in \mathcal{A}$ then let us define the \emph{star of the set $A$ with respect to the family $\mathcal{A}$} as
$$\St(A,\mathcal{A})=\bigcup \left\{A'\in \mathcal{A}: A\cap A'\neq \emptyset\right\}.$$
\end{notation}

For the next result see \cite[4.1.1.~Lemma]{E} and its proof.

\begin{lemma}\label{l:411} Let $X$ be a normal space and $m\in \mathbb{N}^+$. Assume that $\mathcal{V}_i$ $(i\in \mathbb{N}^+)$ are open covers of $X$ such that $\mathcal{V}_{i+1}$ is a refinement of $\mathcal{V}_i$ for every $i\in \mathbb{N}^+$ and $\left\{\St(V,\mathcal{V}_i): V\in \mathcal{V}_i,\, i\in \mathbb{N}^+\right\}$ is a basis of $X$.
Then every finite open cover $\mathcal{U}$ of $X$ has an open shrinking $\mathcal{V}$ such that $T_{m}(\mathcal{V})\subseteq \bigcup_{i=1}^{\infty} T_{m}(\mathcal{V}_i)$.
\end{lemma}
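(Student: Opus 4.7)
The plan is to construct the shrinking $\mathcal{V}=\{V_1,\ldots,V_k\}$ of $\mathcal{U}=\{U_1,\ldots,U_k\}$ by gathering, for each $j$, those ``small'' members of $\bigcup_i\mathcal{V}_i$ whose star sits inside $U_j$, and then to verify the multiplicity inclusion by lifting everything to a single high-level cover $\mathcal{V}_{i^*}$.

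Call $V\in\mathcal{V}_i$ \emph{small} if $\St(V,\mathcal{V}_i)\subseteq U_j$ for some $j\in\{1,\ldots,k\}$. The preparatory step is to show that every $x\in X$ lies in some small $V$. Given $j$ with $x\in U_j$, the basis hypothesis delivers $V_0\in\mathcal{V}_{i_0}$ with $x\in\St(V_0,\mathcal{V}_{i_0})\subseteq U_j$. A nested application of the basis hypothesis inside $\St(V_0,\mathcal{V}_{i_0})$, combined with the key refinement inequality
\[
V'\subseteq V,\ V'\in\mathcal{V}_{i'},\ V\in\mathcal{V}_i,\ i'\geq i \ \Longrightarrow\ \St(V',\mathcal{V}_{i'})\subseteq \St(V,\mathcal{V}_i),
\]
which follows directly from unwinding the refinement $\mathcal{V}_{i'}\to\mathcal{V}_i$, then produces a small $V'\in\mathcal{V}_{i'}$ containing $x$. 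The same inequality gives the crucial monotonicity principle: if we define the canonical index $j(V)=\min\{j:\St(V,\mathcal{V}_i)\subseteq U_j\}$ for small $V$, then $j$ is non-increasing when one passes to a refinement contained in $V$.

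For the construction, set $V_j=\bigcup\{V:V\text{ small},\ j(V)=j\}$. Each $V_j$ is open and contained in $U_j$, and by the covering claim above $\bigcup_{j=1}^{k} V_j=X$, so $\mathcal{V}=\{V_j\}_{j=1}^k$ is indeed an open shrinking of $\mathcal{U}$.

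The technical heart is the multiplicity inclusion $T_m(\mathcal{V})\subseteq\bigcup_i T_m(\mathcal{V}_i)$. Given $x\in V_{j_1}\cap\cdots\cap V_{j_m}$ with pairwise distinct $j_\ell$, one extracts small $V^{(\ell)}\in\mathcal{V}_{i_\ell}$ with $x\in V^{(\ell)}$ and $j(V^{(\ell)})=j_\ell$. Setting $i^*=\max_\ell i_\ell$, the task is to produce $m$ distinct members of $\mathcal{V}_{i^*}$ containing $x$. I expect this to be the main obstacle: a single $W\in\mathcal{V}_{i^*}$ through $x$ can, via different refinement witnesses, lie inside several of the $V^{(\ell)}$'s at once, threatening to collapse the count. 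The resolution, which is the technical content of \cite[4.1.1.~Lemma]{E}, rests on the monotonicity of $j$ under refinement together with the distinctness of the minimal indices $j_\ell$: these together force the $\mathcal{V}_{i^*}$-pieces of the various $V^{(\ell)}$ at $x$ to remain distinct, yielding $x\in T_m(\mathcal{V}_{i^*})\subseteq\bigcup_i T_m(\mathcal{V}_i)$.
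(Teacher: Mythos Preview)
The paper does not prove this lemma; it cites \cite[4.1.1.~Lemma]{E} for both the statement and the proof. So there is no in-paper argument to compare against, and the question is whether your sketch stands on its own.

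It does not: the construction $V_j=\bigcup\{V\text{ small}: j(V)=j\}$ already fails the conclusion. Take $X=\{0,1\}^{\mathbb N}$, let $\mathcal V_i$ be the partition into length-$i$ cylinders (so $\St(V,\mathcal V_i)=V$ and the stars form a basis), set $m=2$, and put $U_1=\{x:x_1=0\text{ or }x_2=0\}$, $U_2=\{x:x_1=1\}$. Every cylinder is small; one has $j([1])=2$ while $j([10])=1$, so $[10]\subseteq V_1\cap V_2$ and $T_2(\mathcal V)\neq\emptyset$, whereas each $\mathcal V_i$ is a partition and $T_2(\mathcal V_i)=\emptyset$ for all $i$. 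The very monotonicity you invoke is what breaks things: along the nested chain $W_1\supseteq W_2\supseteq\cdots$ of members through a point $x$, the index $j(W_i)$ drifts downward, and every value it visits places $x$ into a new $V_j$, manufacturing multiplicity in $\mathcal V$ that no single $\mathcal V_i$ witnesses. Concretely, at level $i^*=2$ the only cylinder through $x=(1,0,0,\dots)$ is $[10]$, so your assertion that the ``$\mathcal V_{i^*}$-pieces remain distinct'' and hence $x\in T_m(\mathcal V_{i^*})$ is simply false.

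A tell-tale sign is that you never use the normality hypothesis. In Engelking's argument normality enters at the outset via a closed shrinking $\{F_j\}$ of $\mathcal U$; the $V_j$ are then assembled by assigning to each point a \emph{single} carefully chosen level (determined by how the stars sit relative to the $F_j$ and $U_j$), rather than sweeping over all levels as you do. Pinning each point to one level is precisely what blocks the drift that sinks your construction.
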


The following lemma helps us to work with $P_{c^nH}$.

\begin{lemma} \label{l:shr} Let $X$ be a separable metric space and let $m\in \mathbb{N}^+$ and $s\geq 0$.
Then the following properties are equivalent:
\begin{enumerate}[(i)]

\item \label{eq:i} For every $\varepsilon>0$ and open cover $\mathcal{U}$ of $X$ there is an
open refinement $\mathcal{V}$ of $\mathcal{U}$ such that $\mathcal{H}^{s+\varepsilon}_{\infty}(T_{m}(\mathcal{V}))\leq \varepsilon$;

\item \label{eq:ii} for every $\varepsilon>0$ and
finite open cover $\mathcal{U}$ of $X$ there is a
finite open refinement $\mathcal{V}$ of $\mathcal{U}$ such that
$\mathcal{H}^{s+\varepsilon}_{\infty}(T_{m}(\mathcal{V}))\leq \varepsilon$;

\item \label{eq:iii}  for every $\varepsilon>0$ and open cover $\mathcal{U}$ of $X$ there is an open shrinking $\mathcal{V}$ of $\mathcal{U}$ such that
$\mathcal{H}^{s+\varepsilon}_{\infty}(T_{m}(\mathcal{V}))\leq \varepsilon$;

\item  \label{eq:iv} for every $\varepsilon>0$ and
finite open cover $\mathcal{U}$ of $X$ there is an open shrinking $\mathcal{V}$ of $\mathcal{U}$
such that $\mathcal{H}^{s+\varepsilon}_{\infty}(T_{m}(\mathcal{V}))\leq \varepsilon$;

\item  \label{eq:v} for every $\varepsilon>0$ and $m$-element  open cover
$\mathcal{U}=\{U_i\}_{i=1}^{m}$ of $X$ there is an open shrinking $\mathcal{V}=\{V_i\}_{i=1}^{m}$ of $\mathcal{U}$ such that
$\mathcal{H}^{s+\varepsilon}_{\infty}(V_1\cap \dots \cap V_{m})\leq \varepsilon$;

\item  \label{eq:vi} for every open cover $\mathcal{U}_0$ of $X$ there are locally finite open covers
$\mathcal{U}_i$ $(i\in \mathbb{N}^+)$ of $X$ such that for all $i\in \mathbb{N}^+$ we have $\mesh \mathcal{U}_i\leq 1/i$, $\mathcal{H}^{s+1/i}_{\infty}(T_m(\mathcal{U}_i))\leq 1/i$,
and for every $U\in \mathcal{U}_i$ there exists $V\in \mathcal{U}_{i-1}$ such that $\cl (U)\subseteq V$;

\item \label{eq:vii} there exist open covers $\mathcal{U}_i$ $(i\in \mathbb{N}^+)$ of $X$ such that for all $i\in \mathbb{N}^+$ we have $\mesh \mathcal{U}_i\leq 1/i$, $\mathcal{H}^{s+1/i}_{\infty}(T_m(\mathcal{U}_i))\leq 1/i$, and $\mathcal{U}_{i+1}$ is a refinement of $\mathcal{U}_i$.
\end{enumerate}
\end{lemma}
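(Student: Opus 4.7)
The plan is to run a cycle of implications covering all seven properties, supplemented by several trivial ones. Trivially a shrinking is a refinement, so (iii)$\Rightarrow$(i) and (iv)$\Rightarrow$(ii); an $m$-element cover is a particular finite cover, so (iv)$\Rightarrow$(v); and dropping the local-finiteness and closure-containment conditions of (vi) yields (vii), giving (vi)$\Rightarrow$(vii).

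The implications (i)$\Rightarrow$(ii), (i)$\Rightarrow$(iii), and (ii)$\Rightarrow$(iv) are all handled by a single ``grouping'' trick. Given a refinement $\mathcal{V}$ of a cover $\mathcal{U}=\{U_\alpha\}_{\alpha\in A}$, pick $\alpha(V)\in A$ for each $V\in\mathcal{V}$ with $V\subseteq U_{\alpha(V)}$ and set $\tilde U_\alpha=\bigcup\{V\in\mathcal{V}:\alpha(V)=\alpha\}$. Then $\{\tilde U_\alpha\}_{\alpha\in A}$ is an open shrinking of $\mathcal{U}$ (finite whenever $\mathcal{U}$ is), and a point belonging to $k$ of the $\tilde U_\alpha$'s lies in at least $k$ distinct members of $\mathcal{V}$, so $T_m(\{\tilde U_\alpha\})\subseteq T_m(\mathcal{V})$ and the Hausdorff-content bound is preserved.

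The main construction is (v)$\Rightarrow$(vi), and I expect it to be the chief obstacle. I build the sequence $\mathcal{U}_i$ inductively: Stone's theorem (Theorem \ref{t:stone}) combined with separability yields a locally finite open refinement of $\mathcal{U}_{i-1}$ with mesh $\le 1/i$; normality together with Theorem \ref{t:shrinking} arranges the closure-containment $\cl(U)\subseteq V$ for some $V\in\mathcal{U}_{i-1}$; and the bound $\mathcal{H}^{s+1/i}_{\infty}(T_m(\mathcal{U}_i))\le 1/i$ is obtained by applying (v) along each $m$-tuple drawn from the (countable, locally finite) family, with individual shrinking parameters $\varepsilon_k=1/(i\cdot 2^k)$ so that the contributions sum to at most $1/i$. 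The difficulty here is that (v) is stated for $m$-element covers of $X$, whereas an $m$-subfamily of a locally finite cover typically fails to cover $X$; bridging this requires either a subspace form of (v) or careful combinatorial bookkeeping over the $m$-fold intersections appearing in the construction.

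The closing step (vii)$\Rightarrow$(iii) produces shrinkings of arbitrary covers from the sequence. Given an open cover $\mathcal{U}$ of $X$ and $\varepsilon>0$, I first pass to a locally finite open refinement of $\mathcal{U}$ via Stone's theorem, and then extract a subsequence $\mathcal{W}_j=\mathcal{U}_{i_j}$ of the (vii)-sequence with $1/i_j\le \varepsilon/2^j$. By Fact \ref{f:H^s} (since $1/i_j\le 1$) we have $\mathcal{H}^{s+\varepsilon}_\infty(T_m(\mathcal{W}_j))\le 1/i_j$, and since $\mesh \mathcal{W}_j\to 0$ the stars of $\mathcal{W}_j$ form a basis of $X$. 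Lemma \ref{l:411} (together with a passage to the locally finite setting, available in metric spaces by paracompactness) then produces an open shrinking $\mathcal{V}$ of $\mathcal{U}$ with $T_m(\mathcal{V})\subseteq\bigcup_j T_m(\mathcal{W}_j)$, and subadditivity of Hausdorff content gives $\mathcal{H}^{s+\varepsilon}_\infty(T_m(\mathcal{V}))\le\sum_j\varepsilon/2^j=\varepsilon$. Combined with (iii)$\Rightarrow$(i), this closes the cycle.
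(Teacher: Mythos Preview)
Your scheme is close to the paper's, and the grouping trick and the use of Lemma~\ref{l:411} are exactly right. But there is a genuine gap: the passage from \emph{finite} covers to \emph{arbitrary} covers is never actually carried out, and without it your cycle does not close.

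Concretely, Lemma~\ref{l:411} as stated in the paper applies only to \emph{finite} open covers $\mathcal{U}$. Your step (vii)$\Rightarrow$(iii) invokes it for an arbitrary (or locally finite) cover, with the parenthetical remark that one may ``pass to the locally finite setting by paracompactness''; but paracompactness only lets you refine to a locally finite cover, it does not upgrade the conclusion of Lemma~\ref{l:411}. What you actually obtain from Lemma~\ref{l:411} is (vii)$\Rightarrow$(iv). With only that, your implications give $(\mathrm{i})\Leftrightarrow(\mathrm{iii})\Rightarrow(\mathrm{ii})\Leftrightarrow(\mathrm{iv})$ and $(\mathrm{iv})\Leftrightarrow(\mathrm{v})\Leftrightarrow(\mathrm{vi})\Leftrightarrow(\mathrm{vii})$ (the last cycle assuming your (v)$\Rightarrow$(vi) goes through), but there is no arrow back from the $(\mathrm{ii})$--$(\mathrm{vii})$ block to $(\mathrm{i})$ or $(\mathrm{iii})$.

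The same obstacle appears in your (v)$\Rightarrow$(vi), and you correctly flag it: applying (v) ``along each $m$-tuple'' of a countable locally finite cover is an \emph{infinite} iteration of shrinkings, and one must argue that the limit family is still an open cover. This is not automatic; it is precisely the content of the paper's hardest implication (iv)$\Rightarrow$(i). There the argument enumerates the finite subsets $Q_i\subseteq\mathbb{N}$ of the index set of a locally finite cover, introduces the closed ``combinatorial type'' sets
\[
F_i=\bigcap_{j\in Q_i}\cl(U_j)\cap\bigcap_{j\notin Q_i}(X\setminus U_j),
\]
and performs the $i$th shrinking only on $F_i$. Local finiteness guarantees that each point lies in only finitely many $F_i$, so the shrinkings stabilize locally and the limit sets $V_j=\bigcap_i V_{i,j}$ remain open and cover $X$. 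Either you must supply an argument of this kind inside your (v)$\Rightarrow$(vi), or you should insert the paper's route (v)$\Rightarrow$(iv)$\Rightarrow$(i) (the first by iterating over the $\binom{k}{m}$ many $m$-subsets of a finite cover, the second via the $F_i$ construction) and then derive (vi) from (iii) rather than from (v).
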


\begin{proof} The proof consists of several implications of different difficulty levels. Proving directions
$(iv) \Rightarrow (i)$ and $(v)\Rightarrow (iv)$ need a lot of effort, while the other directions are more or less obvious.

$(i)\Rightarrow (iii)$ and $(ii)\Rightarrow (iv)$: Let $\mathcal{U}=\{U_i\}_{i\in I}$ be an
open cover of $X$ and let $\mathcal{W}$ be an open refinement of $\mathcal{U}$ such that
$\mathcal{H}^{s+\varepsilon}_{\infty}(T_{m}(\mathcal{W}))\leq \varepsilon$ for some $\varepsilon>0$.
It is enough to find an open shrinking $\mathcal{V}$ of $\mathcal{U}$ with
$\mathcal{H}^{s+\varepsilon}_{\infty}\left(T_{m}(\mathcal{V})\right)\leq \varepsilon$.
For every $W\in \mathcal{W}$ let us choose $i(W)\in I$
such that $W\subseteq U_{i(W)}$. Let $V_i=\bigcup\{W\in \mathcal{W}: i(W)=i\}$ for all $i\in I$
and set $\mathcal{V}=\{V_i\}_{i\in I}$. Then $\mathcal{V}$ is an open shrinking of $\mathcal{U}$ with
$T_{m}(\mathcal{V})\subseteq T_{m}(\mathcal{W})$, so $\mathcal{H}^{s+\varepsilon}_{\infty}\left(T_{m}(\mathcal{V})\right)\leq \varepsilon$.

$(iii)\Rightarrow (ii)$: Straightforward.

$(iv) \Rightarrow (i)$: Let $\mathcal{U}$ be an open cover of $X$ and $\varepsilon>0$, we need to find an open refinement $\mathcal{V}$ of $\mathcal{U}$ with $\mathcal{H}_{\infty}^{s+\varepsilon}(T_m(\mathcal{V}))\leq \varepsilon$. By Fact \ref{f:H^s} we may assume that $\varepsilon<1$.
Suppose that $\mathcal{U}$ is infinite, otherwise we are done. As $X$ is separable, we may assume that $\mathcal{U}$ is countable, let $\mathcal{U}=\{U_j: j\in \mathbb{N}\}$. By Theorem \ref{t:stone} we may suppose that $\mathcal{U}$ is locally finite. Let us enumerate the finite subsets of $\mathbb{N}$ as $\mathbb{N}^{<\omega}=\{Q_i: i\in \mathbb{N}^{+}\}$. For every $i\in \mathbb{N}^{+}$ let us define a closed set by
$$F_i=\bigcap_{j\in Q_i} \cl (U_j) \cap \bigcap_{j\notin Q_i} (X\setminus U_j).$$
Let $\mathcal{V}_0=\{V_{0,j}: j\in \mathbb{N}\}$ be the open cover defined as $V_{0,j}=U_j$ for all $j\in \mathbb{N}$. Assume by induction that $i\in \mathbb{N}^+$ and open covers $\mathcal{V}_{k}=\{V_{k,j}:j\in \mathbb{N}\}$ are already defined for all $k\in \{0,\dots,i-1\}$ such that $\mathcal{V}_{k+1}$ is a shrinking of $\mathcal{V}_k$ for all $k\leq i-2$. Now we define $\mathcal{V}_i$. As $V_{i-1,j}\subseteq V_{0,j}=U_{j}$ for all $j\in \mathbb{N}$, the definition of $F_i$ yields that $V_{i-1,j}\subseteq X\setminus F_i$ if $j\notin Q_i$, so $\{X\setminus F_i, \, V_{i-1,j}:j\in Q_i\}$ is a finite open cover of $X$. Applying property \eqref{eq:iv} for this cover with
$\varepsilon 2^{-i}>0$ and intersecting the open sets with $F_i$ imply that the finite open cover
$\{F_i\cap V_{i-1,j}:j\in Q_i\}$ of $F_i$ has an open shrinking $\mathcal{W}_i=\{W_{i,j}:j\in Q_i\}$ such that
\begin{equation} \label{eq:HWi} \mathcal{H}_{\infty}^{s+\varepsilon 2^{-i}}(T_m(\mathcal{W}_i))\leq \varepsilon 2^{-i}.
\end{equation}
Set $\mathcal{V}_i=\{V_{i,j}: j\in \mathbb{N}\}$, where
$$V_{i,j}=
\begin{cases}
(V_{i-1,j}\setminus F_i)\cup W_{i,j} & \textrm{ if } j\in Q_i, \\
V_{i-1,j} & \textrm{ if } j\notin Q_i.
\end{cases}
$$
It is easy to see that $V_{i,j}$ are open sets in $X$ such that
\begin{equation} \label{eq:ivFi} V_{i-1,j}\setminus V_{i,j}\subseteq F_i \quad  (j\in \mathbb{N}).
\end{equation}
The construction and the inductive hypothesis yield $\left(\bigcup \mathcal{V}_i\right)\cap (X\setminus F_i)=\left(\bigcup \mathcal{V}_{i-1}\right)\cap (X\setminus F_i)=X\setminus F_i$ and $F_i\subseteq \bigcup \mathcal{V}_i$, thus $\mathcal{V}_i$ covers $X$. Hence $\mathcal{V}_i$ is an open shrinking of $\mathcal{V}_{i-1}$.

Let us define $\mathcal{V}=\{V_j:j\in \mathbb{N}\}$ as
$$V_{j}=\bigcap_{i=0}^{\infty} V_{i,j}.$$

We show that $V_j$ is open for all $j\in \mathbb{N}^+$. Let us fix $j\in \mathbb{N}^+$ and $x\in \mathcal{V}_j$. The local finiteness of $\mathcal{U}$ yields that there are $N\in \mathbb{N}$ and an open neighborhood $U$ of $x$ such that $U\cap \cl (U_j)=\emptyset$ for every $j>N$. There exists an $M\in \mathbb{N}$ such that  $Q_i\nsubseteq \{0,\dots,N\}$ if $i>M$, thus $U\cap F_i=\emptyset$ for all $i>M$. As $\mathcal{V}_M$ is a cover, there exists $j\in \mathbb{N}$ such that $x\in V_{M,j}$. Then \eqref{eq:ivFi} yields that $U\cap V_{M,j}\subseteq V_{i,j}$ for all $i>M$, so $U\cap V_{M,j}\subseteq V_{j}$ is an open neighborhood of $x$.

We prove that $\mathcal{V}$ is a cover.
Let $x\in X$ be arbitrarily fixed, the local finiteness of $\mathcal{U}$ yields that there is an $N\in \mathbb{N}$ such that $x\notin \cl (U_j)$ for every $j>N$.
There exists an $M\in \mathbb{N}$ such that $Q_i\nsubseteq \{0,\dots,N\}$ for all $i>M$, thus $x\notin F_i$ for all $i>M$. As $\mathcal{V}_M$ is a cover, there exists $j\in \mathbb{N}$ such that $x\in V_{M,j}$. Then \eqref{eq:ivFi} implies that $x\in V_{i,j}$ for all $i>M$, thus $x\in V_j$.

Now we show that
\begin{equation} \label{eq:tmiv} T_m(\mathcal{V})\subseteq \bigcup_{i=1}^{\infty} T_m(\mathcal{W}_i).
\end{equation}
Assume $x\in T_m(\mathcal{V})$, then there are distinct indexes $j_1,\dots,j_m\in \mathbb{N}$ such that
$x\in V_{j_1}\cap\dots \cap V_{j_m}$. The local finiteness of $\mathcal{U}$ implies that there is a $k\in \mathbb{N}^+$ such that $x\in F_k$. Then $x\in V_{j}\subseteq V_{k,j}$ and $x\notin V_{k-1,j}\setminus F_k$ yields $x\in W_{k,j}$ for all $j\in \{j_1,\dots,j_m\}$, thus $x\in T_m(\mathcal{W}_k)$, so \eqref{eq:tmiv} holds.

Hence $\mathcal{V}$ is an open shrinking (specially a refinement) of $\mathcal{U}=\mathcal{V}_0$. Then \eqref{eq:tmiv},
the subadditivity of $\mathcal{H}_{\infty}^{s+\varepsilon}$, Fact \ref{f:H^s}, and \eqref{eq:HWi} yield
$$\mathcal{H}_{\infty}^{s+\varepsilon}(T_{m} (\mathcal{V}))\leq \sum_{i=1}^{\infty} \mathcal{H}_{\infty}^{s+\varepsilon}(T_{m}(\mathcal{W}_i))
\leq \sum_{i=1}^{\infty} \mathcal{H}_{\infty}^{s+\varepsilon 2^{-i}}(T_m(\mathcal{W}_i))
\leq \sum_{i=1}^{\infty} \varepsilon 2^{-i}=\varepsilon.
$$
Thus property \eqref{eq:i} holds.

$(iv) \Rightarrow (v)$: Straightforward.

$(v)\Rightarrow (iv)$: Let $\mathcal{U}=\{U_i\}_{i=1}^{k}$ be a finite open cover of $X$ and let $\varepsilon>0$.
We need to prove that there exists an open shrinking $\mathcal{V}$ of $\mathcal{U}$ such that $\mathcal{H}^{s+\varepsilon}_{\infty}(T_{m}(\mathcal{V}))\leq \varepsilon$.
We may suppose $k\geq m$, otherwise we are done. By Fact \ref{f:H^s} we may assume that $\varepsilon<1$.

First we prove that there is an open shrinking $\mathcal{W}=\{W_i\}_{i=1}^{k}$ of $\mathcal{U}$ such
that $\mathcal{H}^{s+\varepsilon}_{\infty}(W_1\cap \dots \cap W_{m})\leq \delta$,
where $\delta=\varepsilon/\binom{k}{m}$.
Let us define $\mathcal{U}'=\{U'_i\}_{i=1}^{m}$ such that $U'_i=U_i$ if $i\in \{1,\dots,m-1\}$ and
$U'_m=\bigcup_{i=m}^{k} U_i$.
Then \eqref{eq:v} yields that there is an open shrinking $\mathcal{W}'=\{W'_i\}_{i=1}^{m}$ of $\mathcal{U}'$ such that
\begin{equation} \label{eq:delta} \mathcal{H}^{s+\delta}_{\infty}\left(W'_1\cap \dots \cap W'_{m}\right)\leq \delta.
\end{equation}
Let us define $\mathcal{W}=\{W_i\}_{i=1}^{k}$ such that $W_i=W'_i$ if $i\in \{1,\dots,m-1\}$ and
$W_i=W'_m\cap U_i$ if $i\in \{m,\dots,k\}$. Then $W'_m\subseteq U'_m$ yields $\bigcup_{i=1}^{k} W_i=\bigcup_{i=1}^{m} W'_i=X$, so
$\mathcal{W}$ is an open shrinking of $\mathcal{U}$. Fact \ref{f:H^s} with $\delta<\varepsilon<1$, the definition of $\mathcal{W}$ and \eqref{eq:delta} imply
\begin{align*}
\mathcal{H}^{s+\varepsilon}_{\infty}\left(W_1\cap \dots \cap W_{m}\right) &\leq \mathcal{H}^{s+\delta}_{\infty}(W_1\cap \dots \cap W_{m}) \notag \\
&\leq \mathcal{H}^{s+\delta}_{\infty}\left(W'_1\cap \dots \cap W'_{m}\right)\leq \delta.
\end{align*}

Now the iteration of the above statement yields the required $\mathcal{V}$.
More precisely, let $\mathcal{N}$ be the collection of $m$-element subsets of
$\{1,\dots,k\}$ and let $n=\binom{k}{m}$. Consider a bijection $\phi\colon \{1,\dots,n\}\to \mathcal{N}$.
For $j\in \{1,\dots,n\}$ and $l\in \{1,\dots,m\}$ let $\phi(j,l)$ be the $l$th element of $\phi(j)$
corresponding to the natural ordering. Let $\mathcal{U}_0=\mathcal{U}$ and denote $\mathcal{U}_0=\{U_{0,i}\}_{i=1}^{k}$, where
$U_{0,i}=U_i$. Assume by induction that the open cover
$\mathcal{U}_{j-1}=\{U_{j-1,i}\}_{i=1}^{k}$ of $X$ is
already defined for some $j\in \{1,\dots ,n\}$. Applying the above statement for a rearranged copy of
$\mathcal{U}_{j-1}$ we obtain that there is an open shrinking
$\mathcal{U}_{j}=\{U_{j,i}\}_{i=1}^{k}$ of $\mathcal{U}_{j-1}$ such that
\begin{equation} \label{eq:phij} \mathcal{H}_{\infty}^{s+\varepsilon}\left(\bigcap_{l=1}^{m} U_{j,\phi(j,l)}\right)\leq \delta. \end{equation}
Now $k$-element open covers $\mathcal{U}_0,\dots,\mathcal{U}_n$ of $X$ are defined such that $\mathcal{U}_n$ is a shrinking of $\mathcal{U}_j$ for all $j\in \{0,\dots,n\}$. Therefore the subadditivity of $\mathcal{H}^{s+\varepsilon}_{\infty}$ and \eqref{eq:phij} imply
\begin{align*} \mathcal{H}^{s+\varepsilon}_{\infty}(T_{m}(\mathcal{U}_n))&\leq \sum_{j=1}^{n}\mathcal{H}^{s+\varepsilon}_{\infty} \left(\bigcap_{l=1}^{m} U_{n,\phi(j,l)}\right) \\
&\leq \sum_{j=1}^{n}\mathcal{H}^{s+\varepsilon}_{\infty} \left(\bigcap_{l=1}^{m} U_{j,\phi(j,l)}\right)\leq n\delta=\varepsilon.
\end{align*}
Hence $\mathcal{V}=\mathcal{U}_{n}$ is an open shrinking of $\mathcal{U}$ satisfying \eqref{eq:iv}.

$(iii)\Rightarrow (vi)$: Let $\mathcal{U}_0$ be an open cover of $X$, we need to define open covers $\mathcal{U}_i$ $(i\in \mathbb{N}^+)$ satisfying \eqref{eq:vi}.
Assume by induction that $i\in \mathbb{N}^+$ and $\mathcal{U}_{i-1}$ is already defined. Let $\mathcal{V}_i$ be an open refinement of $\mathcal{U}_{i-1}$ such that
$\mesh \mathcal{V}_i\leq 1/i$ and for every $V\in \mathcal{V}_i$ there exists $U\in \mathcal{U}_{i-1}$ such that $\cl (V)\subseteq U$. By Theorem \ref{t:stone} we may assume that $\mathcal{V}_i$ is locally finite. Applying \eqref{eq:iii} for $\mathcal{V}_{i}$ and $\varepsilon=1/i$ yields that there is an open shrinking $\mathcal{U}_i$ of $\mathcal{V}_i$ such that $\mathcal{H}_{\infty}^{s+1/i}(T_m(\mathcal{U}_i))\leq 1/i$. Then the defined covers $\mathcal{U}_i$ clearly satisfy \eqref{eq:vi}.

$(vi)\Rightarrow (vii)$: Straightforward.

$(vii)\Rightarrow (iv)$: Let $\mathcal{U}$ be a finite open cover of $X$ and let $\varepsilon>0$. We need to find an open shrinking $\mathcal{V}$ of $\mathcal{U}$ with $\mathcal{H}_{\infty}^{s+\varepsilon}(T_m(\mathcal{V}))\leq \varepsilon$. By Fact \ref{f:H^s} we may assume that $\varepsilon<1$.  Assume that open covers $\mathcal{U}_i$ of $X$ are given according to \eqref{eq:vii}. Fix $k\in \mathbb{N}^+$ such that $k\geq 1/\varepsilon$ and let $\mathcal{V}_i=\mathcal{U}_{k2^{i}}$ for all $i\in \mathbb{N}^+$. Since $\mesh \mathcal{V}_i\to 0$ as $i\to \infty$, the family $\left\{\St(V,\mathcal{V}_i): V\in \mathcal{V}_i,\, i\in \mathbb{N}^+\right\}$ is a basis of $X$. Then Lemma \ref{l:411} yields that there is an open shrinking $\mathcal{V}$ of $\mathcal{U}$ such that $T_{m} (\mathcal{V})\subseteq  \bigcup_{i=1}^{\infty} T_{m}(\mathcal{V}_i)$. Therefore the subadditivity of $\mathcal{H}_{\infty}^{s+\varepsilon}$, Fact \ref{f:H^s} for $\frac{1}{k2^{i}}\leq \varepsilon<1$ and property \eqref{eq:vii} imply
\begin{align*} \mathcal{H}_{\infty}^{s+\varepsilon}(T_{m} (\mathcal{V}))&\leq \sum_{i=1}^{\infty} \mathcal{H}_{\infty}^{s+\varepsilon}(T_{m}(\mathcal{V}_i)) \\
&\leq \sum_{i=1}^{\infty} \mathcal{H}_{\infty}^{s+1/(k2^{i})}(T_m(\mathcal{U}_{k2^{i}})) \\
&\leq \sum_{i=1}^{\infty} \frac{1}{k2^{i}}=\frac 1k\leq \varepsilon,
\end{align*}
thus property \eqref{eq:iv} holds.
\end{proof}

The following four lemmas conclude the proof of Theorem \ref{t:equiv2}.

\begin{lemma} \label{l:tndn} $P_{t^nH}\subseteq P_{d^nH}$.
\end{lemma}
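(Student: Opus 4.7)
The plan is to prove the inclusion by induction on $n$, using separability of $X$ to reduce to countable bases and the decomposition theorem (Theorem \ref{t:topeq}) to reassemble topological dimensions after cutting off an exceptional set.

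For the base case $n=1$, take $d\in P_{t^1H}$. Since $X$ is separable, we may pick a countable basis $\{U_i\}_{i\in\mathbb{N}}$ with $\dim_H \partial U_i\leq d-1$. Setting $A=\bigcup_i \partial U_i$, countable stability of the Hausdorff dimension gives $\dim_H A\leq d-1$, while $\{U_i\setminus A\}$ is a basis of $X\setminus A$ each of whose members has empty boundary in $X\setminus A$ (as $\partial_X U_i\subseteq A$), whence $\dim_t(X\setminus A)\leq 0$. Since $d\geq \dim_{t^1H}X\geq \dim_t X\geq 1$, we conclude $d\in P_{d^1H}$.

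For the inductive step, fix $n\geq 2$ and assume the inclusion holds at level $n-1$ for every separable metric space of topological dimension at least $n-1$. Let $d\in P_{t^nH}(X)$; we may assume $d<\infty$, since $\infty\in P_{d^nH}$ by convention. Choose a countable basis $\{U_i\}_{i\in\mathbb{N}}$ of $X$ with $\dim_{t^{n-1}H}\partial U_i\leq d-1$ and set $Y=\bigcup_i \partial U_i$. Theorem \ref{t:cs} yields $\dim_{t^{n-1}H}Y\leq d-1$, while $\{U_i\setminus Y\}$ is a basis of $X\setminus Y$ with empty boundaries, so $\dim_t(X\setminus Y)\leq 0$. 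The key step is verifying $\dim_t Y\geq n-1$: otherwise $\dim_t Y\leq n-2$, and the decomposition $X=Y\cup(X\setminus Y)$ together with Theorem \ref{t:topeq} would force $\dim_t X\leq n-1$, contradicting the standing hypothesis $\dim_t X\geq n$. Therefore the inductive hypothesis applies to $Y$ and gives $P_{t^{n-1}H}(Y)\subseteq P_{d^{n-1}H}(Y)$, so $\inf P_{d^{n-1}H}(Y)\leq \inf P_{t^{n-1}H}(Y)=\dim_{t^{n-1}H}Y\leq d-1$. Lemma \ref{l:inf=min} applied at level $n-1$ then produces $A\subseteq Y$ with $\dim_H A\leq d-n$ and $\dim_t(Y\setminus A)\leq n-2$. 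Writing $X\setminus A=(Y\setminus A)\cup(X\setminus Y)$ and applying Theorem \ref{t:topeq} once more yields $\dim_t(X\setminus A)\leq n-1$; combined with $d\geq \dim_{t^nH}X\geq \dim_t X\geq n$ (via Fact \ref{f:ttt}), this shows $d\in P_{d^nH}(X)$.

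The principal obstacle is the verification that $\dim_t Y\geq n-1$, without which the inductive hypothesis says nothing useful about $Y$; this is exactly where the global hypothesis $\dim_t X\geq n$ of the section enters, via the decomposition theorem. A secondary subtlety is that the value $d-1$ need not itself lie in $P_{t^{n-1}H}(Y)$, which is why the argument routes through the decomposition family $P_{d^{n-1}H}(Y)$ and invokes Lemma \ref{l:inf=min} (already established at every level, since its proof depends only on countable stability of $\dim_t$ for $F_{\sigma}$ sets rather than on any part of Theorem \ref{t:equiv2}).
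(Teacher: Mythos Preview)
Your proof is correct and follows essentially the same inductive scheme as the paper's: form $Y=\bigcup_i \partial U_i$, use countable stability of $\dim_{t^{n-1}H}$, apply the inductive hypothesis to $Y$ to extract $A$, and reassemble $X\setminus A=(Y\setminus A)\cup(X\setminus Y)$ via Theorem~\ref{t:topeq}. Your explicit verification that $\dim_t Y\geq n-1$ and your detour through Lemma~\ref{l:inf=min} (in place of the paper's bare assertion that $d-1\in P_{t^{n-1}H}(F)$) are points of extra care rather than a different strategy.
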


\begin{proof} Suppose $d\in P_{t^nH}$ and $d<\infty$, we need to prove that $d\in P_{d^nH}$. The proof is induction on $n$.
We prove the base case $n=1$ and the inductive step for $n>1$ simultaneously, in the latter case we assume by induction that
$P_{t^{n-1}H}(Y)\subseteq P_{d^{n-1}H}(Y)$ for all separable metric spaces $Y$.
Fact~\ref{f:ttt} implies $\dim_{t^nH} K\geq \dim_t K\geq n$, thus $d\geq n$.
There exists a countable basis $\mathcal{U}$ of $X$ such that $\dim_{t^{n-1}H}\partial U\leq d-1$ for all $U\in \mathcal{U}$. Let $F=\bigcup_{U\in \mathcal{U}} \partial U$, then clearly $\dim_t (X\setminus F)\leq 0$.
The countable stability of the $(n-1)$st inductive topological Hausdorff dimension for closed sets yields $\dim_{t^{n-1}H}F\leq d-1$. If $n=1$ then $\dim_H F\leq d-1$ and $\dim_t (X\setminus F)\leq 0$ implies $d\in P_{d^1H}$. If $n>1$ then the inductive hypothesis yields $d-1\in P_{t^{n-1}H}(F)\subseteq P_{d^{n-1}H}(F)$, so
there is a set $A\subseteq F$ such that $\dim_H A\leq (d-1)-(n-1)=d-n$ and $\dim_{t}(F\setminus A)\leq n-2$. Since $(X\setminus A)=(F\setminus A)\cup (X\setminus F)$, Theorem \ref{t:topeq} implies $\dim_t (X\setminus A)\leq n-1$. Thus $A$ witnesses $d\in P_{d^nH}$.
\end{proof}

\begin{lemma} $P_{d^nH} \subseteq P_{p^nH}$.
\end{lemma}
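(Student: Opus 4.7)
The plan is to combine the decomposition theorem for topological dimension with the separation lemma for zero-dimensional subspaces, so that each of the $n$ partitions can be arranged to avoid one piece of a zero-dimensional decomposition of the complement of the witness set.

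Fix $d \in P_{d^nH}$, and assume $d < \infty$ (otherwise there is nothing to prove). Choose a witness $A \subseteq X$ with $\dim_H A \leq d-n$ and $\dim_t(X\setminus A) \leq n-1$. Since $X$ is separable, so is $X\setminus A$, and Theorem~\ref{t:topeq} yields a decomposition
\[
X\setminus A = Z_1 \cup \dots \cup Z_n, \qquad \dim_t Z_i \leq 0 \text{ for all } i\in \{1,\dots,n\}.
\]
This is where the assumption $d \geq n$ is used implicitly: one needs exactly $n$ zero-dimensional pieces, matching the $n$ pairs of closed sets in the definition of $P_{p^nH}$.

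Now let $(A_1,B_1),\dots,(A_n,B_n)$ be arbitrary pairs of disjoint closed subsets of $X$. For each $i$, the set $Z_i$ is a separable subspace of $X$ with $\dim_t Z_i \leq 0$, so Lemma~\ref{l:sep1} produces a partition $L_i$ between $A_i$ and $B_i$ such that $L_i \cap Z_i = \emptyset$. Intersecting over $i$,
\[
\bigcap_{i=1}^n L_i \;\subseteq\; X \setminus \bigcup_{i=1}^n Z_i \;=\; X\setminus (X\setminus A) \;=\; A,
\]
and hence by monotonicity of Hausdorff dimension,
\[
\dim_H\!\Bigl(\bigcap_{i=1}^n L_i\Bigr) \;\leq\; \dim_H A \;\leq\; d-n.
\]
This shows $d \in P_{p^nH}$, completing the proof.

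There is no serious obstacle here; the argument is essentially a direct pairing of the $n$ closed-pair conditions with the $n$ zero-dimensional summands supplied by Theorem~\ref{t:topeq}. The only point that deserves a line of care is that $X\setminus A$ is automatically separable as a subspace of the separable space $X$, which is what licenses the use of both Theorem~\ref{t:topeq} and Lemma~\ref{l:sep1}.
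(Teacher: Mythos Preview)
Your proof is correct and follows essentially the same approach as the paper: decompose $X\setminus A$ into $n$ zero-dimensional pieces via Theorem~\ref{t:topeq}, then use Lemma~\ref{l:sep1} to make each partition $L_i$ miss $Z_i$, forcing $\bigcap_{i=1}^n L_i \subseteq A$. The paper's argument is line-for-line the same (with $Y$ in place of your $A$), minus your side remarks about separability and the implicit role of $d\ge n$.
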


\begin{proof} Assume $d\in P_{d^nH}$ and $d<\infty$. Then there exists $Y\subseteq X$ such that $\dim_H Y\leq d-n$ and $\dim_t (X\setminus Y)\leq n-1$.
Therefore Theorem \ref{t:topeq} yields that $ X\setminus Y=\bigcup_{i=1}^{n}Z_i$ such that $\dim_t Z_i\leq 0$ for all $i\in \{1,\dots,n\}$. Let $(A_1,B_1),\dots,(A_n,B_n)$ be pairs of disjoint closed subsets of $X$. Applying Lemma \ref{l:sep1} for $A_i$, $B_i$ and $Z_i$ yields that there exist partitions $L_i$ between $A_i$ and $B_i$ such that $L_i\cap Z_i=\emptyset$ for all $i\in \{1,\dots,n\}$. Then $\bigcap_{i=1}^{n} L_i\subseteq X\setminus \left(\bigcup_{i=1}^{n}Z_i\right)=Y$, so $\dim_H \left(\bigcap_{i=1}^{n} L_i\right)\leq \dim_H Y\leq d-n$. Thus $d\in  P_{p^nH}$.
\end{proof}

\begin{lemma} \label{l:sncn} $P_{p^nH}\subseteq P_{c^nH}$.
\end{lemma}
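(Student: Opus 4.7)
The plan is to verify property~(v) of Lemma~\ref{l:shr} with $s=d-n$ and $m=n+1$; by the equivalences of that lemma this yields $d\in P_{c^nH}$. Fix an $(n+1)$-element open cover $\{U_i\}_{i=1}^{n+1}$ of $X$ and $\varepsilon>0$. First, by Theorem~\ref{t:shrinking} and iterated normality, I would choose nested shrinkings $F_i\subseteq W_i\subseteq \cl W_i\subseteq U_i$ with both $\{F_i\}$ and $\{W_i\}$ covering $X$. Next, form the pairs $(A_j,B_j)=(\cl W_j,\,X\setminus U_j)$ for $j=1,\dots,n$ and apply $d\in P_{p^nH}$ to obtain partitions $L_j=X\setminus(G_j\cup H_j)$ between $A_j$ and $B_j$ with $\dim_H(\bigcap_{j=1}^{n} L_j)\leq d-n$; by design $\cl W_j\subseteq G_j\subseteq U_j$ and $H_j\supseteq X\setminus U_j$. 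By Fact~\ref{f:equiv} together with a routine thickening argument (cover $\bigcap_j L_j\cap U_{n+1}$ near-optimally and enlarge each piece to an open set of comparable diameter lying inside $\bigcap_{i=1}^{n+1}U_i$), one obtains an open set $N\subseteq\bigcap_{i=1}^{n+1}U_i$ containing $\bigcap_j L_j\cap U_{n+1}$ with $\mathcal{H}^{d-n+\varepsilon}_\infty(N)\leq\varepsilon$.

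Second, I would build the shrinking $\{V_i\}_{i=1}^{n+1}$ by a classical Eilenberg--Otto-type cellular assignment. Consider the open cells $D_s=\bigcap_{j:s_j=0}G_j\cap\bigcap_{j:s_j=1}H_j$ for $s\in\{0,1\}^n$; these are pairwise disjoint and their union is $X\setminus\bigcup_j L_j$. Assign each $D_s$ to a single $V_{i(s)}$ using the rule $i(s)=\min\{j\leq n:s_j=0\}$ when defined (then $D_s\subseteq G_{i(s)}\subseteq U_{i(s)}$) and $i(1^n)=n+1$, noting that $D_{1^n}=\bigcap_j H_j\subseteq F_{n+1}\subseteq U_{n+1}$ since $H_j\cap F_j=\emptyset$ and $\{F_i\}$ covers $X$. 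Enlarge each $V_j$ ($j\leq n$) by a carefully chosen open neighborhood $L_j^+\subseteq U_j$ of $L_j$ in order to cover $\bigcup_j L_j$, and adjoin $N$ to every $V_i$. Provided the $L_j^+$'s are chosen so that $\bigcap_{j=1}^n L_j^+\cap U_{n+1}\subseteq N$, a direct case analysis using the disjointness of the cells shows that any $x\in V_1\cap\cdots\cap V_{n+1}$ outside $N$ would satisfy $x\in D_{1^n}$ (from $x\in V_{n+1}\setminus N$) and $x\in L_i^+$ for every $i\leq n$ (since the only cell containing $x$ is $D_{1^n}$, which is not assigned to any $V_i$ with $i\leq n$), forcing $x\in\bigcap_iL_i^+\cap U_{n+1}\subseteq N$, a contradiction. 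Hence $V_1\cap\cdots\cap V_{n+1}\subseteq N$, which gives the desired Hausdorff-content bound.

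The main obstacle is the construction of such neighborhoods $L_j^+$: the individual sets $L_j$ can have Hausdorff dimension up to $d-1$, so their neighborhoods cannot be controlled in $\mathcal{H}^{d-n+\varepsilon}_\infty$ individually, yet one must force their combined $n$-fold intersection inside $U_{n+1}$ into the thin set $N$ while still covering $\bigcup_j L_j$. A natural plan is to construct the $L_j^+$ iteratively in the spirit of the formulation~(vii) of Lemma~\ref{l:shr}, tightening at each stage so that the $n$-fold intersection converges to $\bigcap_j L_j\cap U_{n+1}\subseteq N$. This delicate combinatorial bookkeeping, parallel to the classical Eilenberg--Otto argument identifying partition and covering dimensions, is the technical heart of the lemma.
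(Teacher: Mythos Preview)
Your cellular-decomposition route can be made to work, but you have badly misjudged where the difficulty lies, and the plan you sketch for the ``main obstacle'' is off target.

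The construction of neighborhoods $L_j^+\supseteq L_j$ with $\bigcap_{j=1}^n L_j^+\cap U_{n+1}\subseteq N$ does \emph{not} require anything like the iterative machinery of Lemma~\ref{l:shr}\,(vii), and the Hausdorff dimensions of the individual $L_j$'s are irrelevant. It is a routine normality (``swelling'') fact: in any normal space, if $L_1,\dots,L_n$ are closed and $U$ is open with $\bigcap_j L_j\subseteq U$, then there exist open $L_j^+\supseteq L_j$ with $\bigcap_j L_j^+\subseteq U$. (Induct on $n$: separate $\bigcap_{j<n}L_j\setminus U$ from $L_n$, etc.) Apply this with $U=N\cup(X\setminus U_{n+1})$ and then intersect each $L_j^+$ with $U_j$. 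With this one-line fix your argument goes through; what you call ``the technical heart of the lemma'' is not.

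More to the point, the paper's proof shows that your entire cellular detour is unnecessary. In your notation, simply take $V_j=G_j$ for $j\le n$, choose any open $U\supseteq\bigcap_j L_j$ with $\mathcal{H}^{d-n+\varepsilon}_\infty(U)\le\varepsilon$ (no restriction to $\bigcap_i U_i$), and set
\[
V_{n+1}=U_{n+1}\cap\Bigl(U\cup\bigcup_{j=1}^n H_j\Bigr).
\]
Since $G_j\cap H_j=\emptyset$ for every $j$, one has $\bigcap_{j=1}^n G_j\cap\bigcup_{j=1}^n H_j=\emptyset$, and hence $\bigcap_{i=1}^{n+1}V_i\subseteq U$ immediately. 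That $\{V_i\}$ covers $X$ follows by intersecting the two obvious covering identities $\bigcup_j G_j\cup U_{n+1}\supseteq\bigcup_i A_i=X$ and $\bigcup_j(G_j\cup H_j)\cup U=X$. No cell assignment, no $L_j^+$, no nested shrinkings $F_i\subseteq W_i$, and no constraint on $N$ are needed: the partition pieces $G_j$ already are the right $V_j$'s, and only $V_{n+1}$ requires a moment's thought.
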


\begin{proof} Assume $d\in P_{p^nH}$ and $d<\infty$, we need to prove $d\in P_{c^nH}$. Let
$\varepsilon>0$ and $\mathcal{U}=\{U_i\}_{i=1}^{n+1}$ be an $(n+1)$-element open cover of $X$.
By Lemma \ref{l:shr} it is enough to find an open shrinking $\mathcal{V}=\{V_i\}_{i=1}^{n+1}$ of $\mathcal{U}$
such that $\mathcal{H}_{\infty}^{d-n+\varepsilon}(V_1\cap \dots \cap V_{n+1})\leq \varepsilon$. By Theorem \ref{t:shrinking}
the finite open cover $\mathcal{U}$
has a closed shrinking $\mathcal{A}=\{A_i\}_{i=1}^{n+1}$. For all $i\in \{1,\dots ,n\}$ let us define $B_i=X\setminus U_i$.
The sequence $(A_1,B_1),\dots,(A_n,B_n)$ consists of $n$ pairs of disjoint closed subsets of $X$. Thus $d\in P_{p^nH}$
yields that there exist partitions $L_i$ between $A_i$ and $B_i$ such that $\dim_H \left(\bigcap_{i=1}^n L_i\right) \leq d-n$.
For all $i\in \{1,\dots,n\}$ consider open sets $V_i,W_i\subseteq X$ such that
\begin{equation} \label{eq:VW} A_i\subseteq V_i,\quad B_i\subseteq W_i,\quad V_i\cap W_i=\emptyset,\quad
\textrm{and} \quad X\setminus L_i=V_i\cup W_i.
\end{equation}
Let $L=\bigcap_{i=1}^n L_i$. Then $\dim_H L\leq d-n$ yields $\mathcal{H}_{\infty}^{d-n+\varepsilon}(L)=0$,
so we can choose an open set $U\subseteq X$ such that
$L\subseteq U$ and $\mathcal{H}_{\infty}^{d-n+\varepsilon}(U)\leq \varepsilon$. Let us consider
$\mathcal{V}=\{V_i\}_{i=1}^{n+1}$, where
\begin{equation} \label{eq:VW2}
V_{n+1}=U_{n+1}\cap \left(U\cup \bigcup_{i=1}^{n} W_i\right).
\end{equation}
Now we show that $\mathcal{V}$ is an open shrinking of $\mathcal{U}$.
Since $V_{n+1}\subseteq U_{n+1}$ and \eqref{eq:VW} yields $V_i\subseteq X\setminus B_i=U_i$ for all $i\in\{1,\dots,n\}$,
we need to prove that $\mathcal{V}$ covers $X$.
Equation \eqref{eq:VW} and $A_{n+1}\subseteq U_{n+1}$ imply
\begin{equation} \label{eq:VW3}
\left(\bigcup_{i=1}^{n} V_i\right) \cup U_{n+1}\supseteq \bigcup_{i=1}^{n+1} A_i=X.
\end{equation}
Equation \eqref{eq:VW} and $L\subseteq U$ yield
\begin{align} \label{eq:VW4}
\left(\bigcup_{i=1}^{n} V_i\right) \cup \left(U\cup \bigcup_{i=1}^{n} W_i\right)
=& \bigcup_{i=1}^{n} (V_i\cup W_i)\cup U \notag \\
=&\bigcup_{i=1}^{n}(X\setminus L_i)\cup U \\
=&(X\setminus L)\cup U=X. \notag
\end{align}
Now \eqref{eq:VW2}, \eqref{eq:VW3} and \eqref{eq:VW4} imply $\bigcup_{i=1}^{n+1}V_i=X$, so $\mathcal{V}$ is an open shrinking of $\mathcal{U}$.

Finally, applying \eqref{eq:VW2} and $V_i\cap W_i=\emptyset$ for $i\in \{1,\dots,n\}$ yield
\begin{align*} \label{eq:VW4}
\bigcap_{i=1}^{n+1} V_i\subseteq& \bigcap_{i=1}^{n} V_i \cap \left(U\cup \bigcup_{i=1}^{n} W_i\right) \\
\subseteq& \left(\bigcap_{i=1}^{n} V_i \cap U\right)\cup \left( \bigcap_{i=1}^{n} V_i \cap \bigcup_{i=1}^{n} W_i\right) \\
\subseteq& U\cup \emptyset=U.
\end{align*}
Therefore
$$\mathcal{H}^{d-n+\varepsilon}_{\infty} \left(\bigcap_{i=1}^{n+1} V_i\right)\leq \mathcal{H}^{d-n+\varepsilon}_{\infty}(U)\leq \varepsilon,$$
and the proof is complete.
\end{proof}

\begin{lemma} \label{l:cntn} $P_{c^nH} \subseteq P_{t^nH}$.
\end{lemma}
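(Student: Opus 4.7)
The plan is to prove $P_{c^nH}(X)\subseteq P_{t^nH}(X)$ by induction on $n$, factoring through $P_{d^nH}$. I will first establish $P_{c^nH}(X)\subseteq P_{d^nH}(X)$ with no induction, and then $P_{d^nH}(X)\subseteq P_{t^nH}(X)$ using Theorem~\ref{t:equiv2} at level $n-1$; this is a legitimate appeal, since the current lemma is exactly what closes the loop for that theorem at each level.

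For the first step, suppose $d\in P_{c^nH}(X)$ is finite. Lemma~\ref{l:shr}\,(vii) combined with Theorem~\ref{t:stone} furnishes locally finite open covers $\mathcal{U}_k$ of $X$ with $\mesh\mathcal{U}_k\to 0$, each $\mathcal{U}_{k+1}$ refining $\mathcal{U}_k$, and, after passing to a subsequence, $\mathcal{H}^{d-n+2^{-k}}_\infty(T_{n+1}(\mathcal{U}_k))\leq 2^{-k}$. Since the $\mathcal{U}_k$ are locally finite and open, each $T_{n+1}(\mathcal{U}_k)$ is open and $F_k:=X\setminus T_{n+1}(\mathcal{U}_k)$ is closed. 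Set $A:=\bigcap_j\bigcup_{k\geq j}T_{n+1}(\mathcal{U}_k)$; a geometric-sum estimate via Fact~\ref{f:H^s} yields $\mathcal{H}^{d-n+2^{-j}}_\infty(A)\leq 2^{1-j}$, so $\dim_H A\leq d-n$. On the closed set $H_j:=\bigcap_{k\geq j}F_k$ the restricted covers $\mathcal{U}_k|_{H_j}$ for $k\geq j$ have mesh tending to $0$ and order $\leq n-1$ (no point of $H_j$ lies in $n+1$ members of $\mathcal{U}_k$), so the classical cover-characterization of small inductive dimension for separable metric spaces gives $\dim_t H_j\leq n-1$. Since $X\setminus A=\bigcup_j H_j$ is an $F_\sigma$, countable stability for closed sets \cite[1.5.4]{E} forces $\dim_t(X\setminus A)\leq n-1$, proving $d\in P_{d^nH}(X)$.

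For the second step, let $d\in P_{d^nH}(X)$ be witnessed by $A$ with $\dim_H A\leq d-n$ and $\dim_t(X\setminus A)\leq n-1$, and via Theorem~\ref{t:topeq} write $X\setminus A=Z_1\cup\cdots\cup Z_n$ with $\dim_t Z_i\leq 0$. Given $x\in X$ and open $V\ni x$, Lemma~\ref{l:sep1} applied to $\{x\}$, $X\setminus V$, and $Z_1$ produces a partition $L=X\setminus(U\cup U')$ between $\{x\}$ and $X\setminus V$ with $L\cap Z_1=\emptyset$; consequently $x\in U$, $\overline{U}\subseteq V$, and $\partial U\subseteq L$. Splitting $L=(L\cap A)\cup\bigl(L\cap(Z_2\cup\cdots\cup Z_n)\bigr)$ gives $\dim_H(L\cap A)\leq d-n$ and $\dim_t(L\cap(Z_2\cup\cdots\cup Z_n))\leq n-2$ by Theorem~\ref{t:topeq}, exhibiting $d-1\in P_{d^{n-1}H}(L)$. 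In the base case $n=1$ the second piece is empty and $\dim_H L\leq d-1$ immediately gives $\dim_{t^0H}\partial U\leq d-1$. For $n\geq 2$, the inductive hypothesis supplies $P_{d^{n-1}H}(L)\subseteq P_{t^{n-1}H}(L)$ (the subcase $\dim_t L<n-1$ is handled trivially by Fact~\ref{f:1}), hence $\dim_{t^{n-1}H}L\leq d-1$, and monotonicity transfers the bound to $\partial U$. Thus $d\in P_{t^nH}(X)$.

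The only genuinely external input is the classical fact used in Step~1 — that a separable metric space admitting a sequence of locally finite open covers of mesh tending to $0$ with order uniformly bounded by $n-1$ has topological dimension at most $n-1$ — which must be cited explicitly; the verification that the restrictions $\mathcal{U}_k|_{H_j}$ remain locally finite open covers of $H_j$ with these properties is immediate because $H_j$ is closed and carries the subspace topology. Everything else combines the previously established facts, Lemma~\ref{l:sep1}, and the decomposition theorem~\ref{t:topeq} in a direct way.
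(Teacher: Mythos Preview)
Your argument is correct and follows a genuinely different route from the paper's. The paper proves $P_{c^nH}\subseteq P_{t^nH}$ directly: given $x\in V_0$, it builds a partition $L$ between $\{x\}$ and $X\setminus V_0$ by an explicit swelling construction (the sets $A_i,B_i,G_i,H_i$) driven by the covers of Lemma~\ref{l:shr}(vi), and then exhibits covers $\mathcal{W}_i$ of $L$ satisfying condition~(vii) of Lemma~\ref{l:shr} with $m=n$, so that $d-1\in P_{c^{n-1}H}(L)$; the inductive hypothesis $P_{c^{n-1}H}\subseteq P_{t^{n-1}H}$ then closes the loop. You instead factor through $P_{d^nH}$: Step~1 produces a \emph{global} decomposition $X=A\cup(X\setminus A)$ witnessing $d\in P_{d^nH}$ (this inclusion $P_{c^nH}\subseteq P_{d^nH}$ is not isolated anywhere in the paper), and Step~2 then uses Lemma~\ref{l:sep1} and the zero-dimensional pieces $Z_i$ from Theorem~\ref{t:topeq} to build the basis, landing in $P_{d^{n-1}H}(L)$ rather than $P_{c^{n-1}H}(L)$. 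Your route is more modular and replaces the swelling construction by the standard zero-dimensional separation lemma; the price is the external dimension-theoretic input in Step~1. Two technical remarks: first, invoking Lemma~\ref{l:shr}(vii) together with Stone's theorem does not quite work, since a locally finite \emph{refinement} need preserve neither the bound on $T_{n+1}$ nor the chain of refinements---cite Lemma~\ref{l:shr}(vi) instead, which already yields locally finite covers with all the required properties; second, the ``classical cover-characterization'' you invoke for $\dim_t H_j\le n-1$ can in fact be obtained inside the paper from Lemma~\ref{l:411} (applied on $H_j$ with $m=n+1$ to the restricted covers, whose stars form a basis since the mesh tends to zero) together with the last equivalence in Theorem~\ref{t:topeq2}.
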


\begin{proof} Suppose $d\in P_{c^nH}$ and $d<\infty$, we need to prove that $d\in P_{t^nH}$. The proof is induction on $n$.
We prove the base case $n=1$ and the inductive step for $n>1$ simultaneously, in the latter case we assume by induction that
$P_{c^{n-1}H}(Y)\subseteq P_{t^{n-1}H}(Y)$ for all separable metric spaces $Y$.

Let $x\in X$ and let $V_0$ be an open set such that $x\in V_0$. For $d\in P_{t^nH}$ it is enough to construct an open set $U\subseteq V_0$ such that
$x\in U$ and $\dim_{t^{n-1}H} \partial U \leq d-1$. Let $A_0=\{x\}$ and $B_0=X\setminus V_0$. Let $\mathcal{U}_0$ be an open cover of $X$ such that if  $U\in \mathcal{U}_0$ and $\cl (U)\cap A_0\neq \emptyset$ then $\cl (U)\cap B_0=\emptyset$.
Applying Lemma \ref{l:shr} for $s=d-n$ and $m=n+1$ yields that there exist locally finite open covers $\mathcal{U}_i$ $(i\in \mathbb{N}^{+})$ of $X$ such that for all $i\in \mathbb{N}^{+}$ we have $\mesh \mathcal{U}_i\leq 1/i$ and $\mathcal{H}_{\infty}^{d-n+1/i} (T_{n+1}(\mathcal{U}_i))\leq 1/i$, and for every $U\in \mathcal{U}_i$ there exists $V\in \mathcal{U}_{i-1}$ such that $\cl (U)\subseteq V$.

Assume by induction that $i\in \mathbb{N}^{+}$ and disjoint closed sets $A_{i-1}$ and $B_{i-1}$ are already defined.
Consider $A_i=X\setminus G_i$ and $B_i=X\setminus H_i$, where
\begin{align*}
G_i=&\bigcup \left\{U\in \mathcal{U}_i: \cl (U)\cap A_{i-1}=\emptyset\right\}, \\
H_i=&\bigcup \left\{U\in \mathcal{U}_i: \cl (U)\cap A_{i-1}\neq \emptyset\right\}.
\end{align*}
As $G_i$ and $H_i$ are open sets such that $G_i\cup H_i=X$, we obtain that $A_i$ and $B_i$ are disjoint closed sets.

We prove that for all $i\in \mathbb{N}^+$
\begin{equation} \label{eq:123}
\textrm{if } U\in \mathcal{U}_i \textrm{ and } \cl (U)\cap A_{i-1}\neq \emptyset \textrm{ then } \cl (U)\cap B_{i-1}=\emptyset.
\end{equation}
If $i=1$ then the definition of $\mathcal{U}_0$ and the fact that $\mathcal{U}_1$ is a refinement of $\mathcal{U}_0$ imply \eqref{eq:123}.
If $i>1$ and $\cl (U)\cap A_{i-1}\neq \emptyset$ then there is a $V\in \mathcal{U}_{i-1}$ such that $\cl(U)\subseteq V$.
Then clearly $V\cap A_{i-1}\neq \emptyset$, thus $V\nsubseteq G_{i-1}$. Therefore $V\subseteq H_{i-1}$, thus $V\cap B_{i-1}=\emptyset$.
Hence $\cl (U)\cap B_{i-1}=\emptyset$, so \eqref{eq:123} holds.

The local finiteness of $\mathcal{U}_i$ implies that
\begin{align*}
\cl (G_i)=&\bigcup \left\{\cl (U): U\in \mathcal{U}_i \textrm{ and } \cl (U)\cap A_{i-1}=\emptyset\right\}, \\
\cl (H_i)=&\bigcup \left\{\cl (U): U\in \mathcal{U}_i \textrm{ and } \cl (U)\cap A_{i-1}\neq \emptyset\right\}.
\end{align*}
Therefore $\cl (G_i)\cap A_{i-1}=\emptyset$ and \eqref{eq:123} implies $\cl(H_i)\cap B_{i-1}=\emptyset$. Thus we obtain
$A_{i-1}\subseteq X\setminus \cl(G_i)=\inter A_i$ and $B_{i-1}\subseteq X\setminus \cl(H_i)=\inter B_i$. Therefore $U_A=\bigcup_{i=0}^{\infty} A_i$ and $U_B=\bigcup_{i=0}^{\infty} B_i$ are disjoint open sets containing $A_0$ and $B_0$, respectively.

Let us define $L=X\setminus (U_A \cup U_B)=\bigcap_{i=1}^{\infty} (G_i\cap H_i)$,
then $L$ is a partition between $A_0$ and $B_0$. It is enough to prove $\dim_{t^{n-1}H} L \leq d-1$,
because then $x\in U_A\subseteq V_0$, and $\partial U_A\subseteq L$ implies $\dim_{t^{n-1}H} \partial U_A \leq \dim_{t^{n-1}H} L \leq d-1$.
For all $i\in \mathbb{N}^{+}$ consider
$$\mathcal{W}_i=\left\{U\cap L: U\in \mathcal{U}_i \textrm{ and } \cl(U)\cap A_{i-1}\neq \emptyset\right\}.$$

Let us fix $i\in \mathbb{N}^+$. Since $L\subseteq H_i$, we obtain that $\mathcal{W}_i$ is an open cover of $L$, and $\mesh \mathcal{U}_i\leq 1/i$ yields $\mesh \mathcal{W}_i\leq 1/i$.

As $L\subseteq G_i$, for every $x\in L$ there exists an $U\in \mathcal{U}_i$ with $\cl (U)\cap A_{i-1}=\emptyset$, thus $T_{n} (\mathcal{W}_i)\subseteq T_{n+1}(\mathcal{U}_i)$. Hence $\mathcal{H}_{\infty}^{d-n+1/i} (T_{n}(\mathcal{W}_i))\leq 1/i$.

We show that $\mathcal{W}_{i+1}$ is a refinement of $\mathcal{W}_i$. Let $U\cap L\in \mathcal{W}_{i+1}$, where $U\in \mathcal{U}_{i+1}$ and
$\cl(U)\cap A_i\neq \emptyset$. Then there exists $V\in \mathcal{U}_i$ such that $\cl(U)\subseteq V$. Then clearly $V\cap A_i\neq \emptyset$, thus
$V\nsubseteq G_i$, so $\cl(V)\cap A_{i-1}\neq \emptyset$. Hence $V\cap L\in \mathcal{W}_i$ and it contains $U\cap L$.

If $n=1$ then $\mathcal{H}_{\infty}^{d-1+1/i} (L)\leq \mathcal{H}_{\infty}^{d-1+1/i} \left(\bigcup \mathcal{W}_i\right)\leq 1/i$ for all $i\in \mathbb{N}^{+}$, therefore
Fact~\ref{f:equiv} yields $\dim_{t^0H} L=\dim_H L\leq d-1$, and we are done.
If $n>1$ then applying Lemma \ref{l:shr} with $s=d-n$ and $m=n$ for the open covers $\mathcal{W}_i$ implies
$d-1\in P_{c^{n-1}H}(L)$. Then the inductive hypothesis yields $d-1\in P_{t^{n-1}H} (L)$, thus $\dim_{t^{n-1}H} L\leq d-1$. The proof is complete.
\end{proof}

\section{The possible values of inductive topological Hausdorff dimensions} \label{s:val}

In this section we provide a complete
description of the possible values of the inductive topological Hausdorff dimensions. We prove that all values satisfying the conditions of
Facts~\ref{f:ttt}, \ref{f:1}, and \ref{f:0} can be realized even by compact metric spaces. This implies that these dimensions are new and independent
in the following sense: The $n$th inductive topological Hausdorff dimension is not the function of the topological dimension and the $k$th inductive topological Hausdorff dimensions, where $k$ runs over $\mathbb{N}\setminus \{n\}$. This generalizes a theorem in \cite{BBE} concerning the possible values of  the topological Hausdorff dimension. The material developed here will be not used in the subsequent sections.

First we need some preparation. By product of two metric spaces $(X,d_X)$ and $(Y,d_Y)$ we mean the $l^2$-product, that is,
$$d_{X\times Y}((x_{1},y_{1}),(x_{2},y_{2}))= \sqrt{d^{2}_{X}(x_1,x_2)+d^{2}_{Y}(y_1,y_2)}.$$
Now we recall a well-known statement, see \cite[Chapters~3]{F} and \cite[Product formula~7.3]{F} for
the definition of the upper box dimension and the proof, respectively. In fact, \cite{F} works
in Euclidean spaces only, but the proof goes through verbatim to general metric spaces.

\begin{lemma} \label{l:box}
Let $X,Y$ be non-empty metric spaces and let us denote by $\overline{\dim}_{B}$ the upper box dimension. Then
$$\dim_{H}(X\times Y) \leq \dim_{H}X+\overline{\dim}_{B} Y.$$
\end{lemma}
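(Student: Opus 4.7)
The plan is to reduce the claim to an estimate on Hausdorff $s$-contents and then build a cover of $X\times Y$ out of an economical Hausdorff cover of $X$ and corresponding box covers of $Y$. Since $\dim_H E\le u$ is equivalent to $\mathcal{H}^u_\infty(E)=0$, it suffices to prove $\mathcal{H}^u_\infty(X\times Y)=0$ for every real $u>\dim_H X+\overline{\dim}_B Y$. Fix such a $u$, and choose $s>\dim_H X$ and $t>\overline{\dim}_B Y$ with $s+t=u$. We may assume $Y$ is bounded, for otherwise $\overline{\dim}_B Y=\infty$ and the inequality is vacuous.

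The upper box dimension hypothesis furnishes a constant $C>0$ and a threshold $r_0>0$ such that for every $0<r\le r_0$ the space $Y$ can be covered by at most $C r^{-t}$ sets of diameter $\le r$. The Hausdorff dimension hypothesis lets us, for any prescribed $\eta>0$, cover $X$ by sets $\{U_i\}_{i\in\mathbb N^+}$ with $\diam U_i\le r_0$ and $\sum_i(\diam U_i)^{s}<\eta$; sets of diameter zero can be treated separately as singletons and contribute nothing. For each $i$, pick a cover $\{V_{i,j}\}_{j=1}^{N_i}$ of $Y$ with $\diam V_{i,j}\le \diam U_i$ and $N_i\le C(\diam U_i)^{-t}$. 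The rectangles $\{U_i\times V_{i,j}\}_{i,j}$ then form a cover of $X\times Y$, and the $l^2$ product metric gives $\diam(U_i\times V_{i,j})\le\sqrt{2}\,\diam U_i$.

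Summing yields
\[
\sum_{i,j}\bigl(\diam(U_i\times V_{i,j})\bigr)^{s+t}\le 2^{(s+t)/2}\sum_i N_i(\diam U_i)^{s+t}\le 2^{(s+t)/2}C\sum_i(\diam U_i)^{s}<2^{(s+t)/2}C\eta.
\]
Given $\varepsilon>0$, choose $\eta$ so that $2^{(s+t)/2}C\eta<\varepsilon$; then $\mathcal{H}^{u}_\infty(X\times Y)<\varepsilon$. Letting $\varepsilon\to 0$ gives $\mathcal{H}^u_\infty(X\times Y)=0$, which completes the argument.

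There is no substantive obstacle: the only wrinkle compared with the Euclidean version in \cite{F} is bookkeeping the factor $\sqrt{2}$ arising from the $l^2$-product metric, which is absorbed harmlessly into the constant in front of $\eta$. The argument uses nothing about $X$ and $Y$ beyond being metric spaces with finite Hausdorff and upper box dimensions respectively, so the formula transfers verbatim from $\mathbb R^n$ to the general metric setting used throughout the paper.
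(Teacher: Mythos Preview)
Your proof is correct and is precisely the argument the paper has in mind: the paper does not prove this lemma itself but cites \cite[Product formula~7.3]{F} and remarks that Falconer's Euclidean proof ``goes through verbatim to general metric spaces.'' What you have written is exactly that verbatim transfer, with the $\sqrt{2}$ factor from the $l^2$-product metric made explicit.
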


For the sake of notational simplicity we adopt the convention that $[0,1]^0=\{0\}$. Let us recall that $\dim_{t^0H}X=\dim_H X$.

\begin{lemma} \label{l:prod} Let $X$ be a non-empty separable metric space and let $n\in \mathbb{N}$. Then
$$\dim_{t^nH}\left(X \times [0,1]^n\right)=\dim_{H} \left(X\times [0,1]^n\right)=\dim_{H} X+n.$$
\end{lemma}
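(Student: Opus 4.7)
The plan is to split the claim into two identities. For $n=0$ the statement is vacuous since $[0,1]^0=\{0\}$ and $\dim_{t^0H}=\dim_H$ by definition, so assume $n\ge 1$ throughout.

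\emph{Hausdorff identity $\dim_H(X\times[0,1]^n)=\dim_H X+n$.} The upper bound is Lemma~\ref{l:box} with $\overline{\dim}_B[0,1]^n=n$. The reverse inequality is the classical product formula $\dim_H(X\times Y)\ge \dim_H X+\dim_H Y$, valid whenever $Y$ has equal Hausdorff and lower box dimensions (see \cite{F}); for $Y=[0,1]^n$ both are $n$, and the argument carries over from the Euclidean setting to general metric spaces exactly as remarked before Lemma~\ref{l:box}.

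\emph{Main identity $\dim_{t^nH}(X\times[0,1]^n)=\dim_H X+n$.} The upper bound $\dim_{t^nH}(X\times[0,1]^n)\le \dim_H(X\times[0,1]^n)$ is Theorem~\ref{t:<n}. For the lower bound, I first observe that any slice $\{x_0\}\times[0,1]^n$ is an isometric copy of $[0,1]^n$ inside $X\times[0,1]^n$, hence by monotonicity $\dim_t(X\times[0,1]^n)\ge n$; this is exactly the hypothesis of Theorem~\ref{t:equiv}, which provides the equivalent description
$$\dim_{t^nH}(X\times[0,1]^n)=\min P_{d^nH}.$$
It therefore suffices to show that every $d\in P_{d^nH}$ satisfies $d\ge \dim_H X+n$. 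Fix such a $d$ and pick the associated $A\subseteq X\times[0,1]^n$ with $\dim_H A\le d-n$ and $\dim_t B\le n-1$, where $B:=(X\times[0,1]^n)\setminus A$.

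The crux is a slicing argument. For each $x\in X$, the fibre $\{x\}\times[0,1]^n$ is isometric to $[0,1]^n$, which has topological dimension exactly $n$; were this fibre entirely contained in $B$, monotonicity of $\dim_t$ would give $n\le \dim_t B\le n-1$, a contradiction. Hence $A$ meets every fibre, and so the $1$-Lipschitz coordinate projection $\pi\colon X\times[0,1]^n\to X$ satisfies $\pi(A)=X$. Consequently
$$\dim_H X=\dim_H\pi(A)\le \dim_H A\le d-n,$$
which gives $d\ge \dim_H X+n$, completing the proof.

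The only delicate aspect is ensuring the characterisation $\min P_{d^nH}$ is available, which is automatic once the slice $\{x_0\}\times[0,1]^n$ is seen to embed isometrically. Everything else reduces to a routine projection-plus-fibre computation, so no genuine obstacle arises; the real content is that Theorem~\ref{t:equiv} packages $\dim_{t^nH}$ in precisely the form that interacts cleanly with Lipschitz projections.
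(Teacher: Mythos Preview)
Your argument is correct and follows essentially the same route as the paper: both use Theorem~\ref{t:<n}, Lemma~\ref{l:box}, and the decomposition from $P_{d^nH}$ together with the slicing/projection observation that $A$ must meet every fibre $\{x\}\times[0,1]^n$. The only difference is organizational: the paper arranges these as a single cyclic chain
\[
\dim_{t^nH}(X\times[0,1]^n)\le\dim_H(X\times[0,1]^n)\le\dim_H X+n\le\dim_{t^nH}(X\times[0,1]^n),
\]
which makes your separate invocation of the reverse product inequality $\dim_H(X\times[0,1]^n)\ge\dim_H X+n$ unnecessary.
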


\begin{proof}
From Theorem \ref{t:<n} it follows that $\dim_{t^nH}\left(X \times [0,1]^n\right)\leq \dim_{H} \left(X\times [0,1]^n\right)$.

Applying  Lemma \ref{l:box} for $Y=[0,1]^n$ we deduce that
$$\dim_{H} \left(X\times [0,1]^n\right) \leq \dim_{H} X+\overline{\dim}_{B} [0,1]^n=\dim_{H} X+n.$$
Finally, we prove that $\dim_H X+n\leq \dim_{t^nH} \left(X\times [0,1]^n\right)$. Let us define
$$\pr_X\colon X\times [0,1]^n\to X, \quad \pr_X (x,y)=x.$$
Let $Z=X\times [0,1]^n$. As $\dim_t Z\geq n$, Theorem~\ref{t:equiv} (moreover, the easy Lemma~\ref{l:tndn})
yields that there is a set $A\subseteq Z$ such that $\dim_H A\leq \dim_{t^nH}Z-n$ and $\dim_t (Z\setminus A)\leq n-1$.
Then $\dim_t (Z\setminus A)\leq n-1<n=\dim_t [0,1]^n$ implies that $A$ intersects $\{x\}\times [0,1]^n$ for all $x\in X$, thus $\pr_X (A)=X$.
Projections do not increase the Hausdorff dimension, thus
$$\dim_{t^nH}Z-n\geq \dim_H A\geq \dim_H \pr_X (A)=\dim_H X.$$
Hence $\dim_{t^nH} \left(X\times [0,1]^n\right) \geq \dim_H X+n$, and the proof is complete.
\end{proof}

Applying the above lemma for $X\times [0,1]^{k-n}$ in place of $X$ yields the following.

\begin{corollary} \label{c:prod} Let $X$ be a non-empty separable metric space and let $n,k\in \mathbb{N}$ with $n\leq k$. Then
$$\dim_{t^nH} \left(X\times [0,1]^k\right)=\dim_H X+k.$$
\end{corollary}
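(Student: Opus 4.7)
The strategy is to reduce Corollary \ref{c:prod} to Lemma \ref{l:prod} by regrouping the unit-interval factors. Setting $X' = X \times [0,1]^{k-n}$, one has the natural isometry (for the $\ell^2$-product metric) $X' \times [0,1]^n \cong X \times [0,1]^k$, so by the bi-Lipschitz invariance of $\dim_{t^nH}$ established in Corollary~\ref{c:bi},
$$\dim_{t^nH}\left(X \times [0,1]^k\right) = \dim_{t^nH}\left(X' \times [0,1]^n\right).$$
Note that $X'$ is non-empty and separable whenever $X$ is, so Lemma~\ref{l:prod} applies to $X'$.

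Applying Lemma~\ref{l:prod} to the non-empty separable metric space $X'$ with the exponent $n$ then gives
$$\dim_{t^nH}\left(X' \times [0,1]^n\right) = \dim_H X' + n = \dim_H\left(X \times [0,1]^{k-n}\right) + n.$$
Next, I compute $\dim_H\left(X \times [0,1]^{k-n}\right)$ by invoking only the middle equality of Lemma~\ref{l:prod}, this time with $n$ replaced by $k-n \in \mathbb{N}$, which yields $\dim_H\left(X \times [0,1]^{k-n}\right) = \dim_H X + (k-n)$. (For the degenerate case $k = n$ this reduces, via the convention $[0,1]^0 = \{0\}$, to the trivial identity $\dim_H X = \dim_H X$.)

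Combining these equalities produces
$$\dim_{t^nH}\left(X \times [0,1]^k\right) = \dim_H X + (k-n) + n = \dim_H X + k,$$
which is the desired conclusion. There is essentially no obstacle here; the only points requiring attention are the verification that the iterated $\ell^2$-product is isometrically associative (immediate from $d^2 = d_X^2 + d_{[0,1]^{k-n}}^2 + d_{[0,1]^n}^2$), the appeal to Corollary~\ref{c:bi} to transfer $\dim_{t^nH}$ across this isometry, and the observation that both applications of Lemma~\ref{l:prod} are legitimate since non-emptiness and separability are preserved under product with a cube.
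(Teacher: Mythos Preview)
Your proof is correct and follows exactly the approach indicated by the paper, which simply says ``Applying the above lemma for $X\times [0,1]^{k-n}$ in place of $X$ yields the following.'' You have spelled out the details the paper leaves implicit: the isometric identification $(X\times[0,1]^{k-n})\times[0,1]^n \cong X\times[0,1]^k$, the use of Corollary~\ref{c:bi} to transfer $\dim_{t^nH}$ across it, and a second appeal to Lemma~\ref{l:prod} (with exponent $k-n$) to compute $\dim_H(X\times[0,1]^{k-n})$.
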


Let $X$ be a non-empty metric space.
If $\dim_t X=\infty$ then Fact~\ref{f:ttt} implies that $\dim_{t^kH} X=\infty$ for all $k\in \mathbb{N}$, thus we may assume that
$\dim_t X=n$ for some $n\in \mathbb{N}$. Fact~\ref{f:1} yields $\dim_{t^kH} X=n$ for all $k>n$, therefore it is enough to describe the possible
$(n+1)$-tuples $(\dim_{t^nH} X,\dots, \dim_{t^0H} X)$. Facts~\ref{f:ttt} and \ref{f:0} imply that
\begin{equation*} \label{eq:dims} n\leq \dim_{t^nH} X\leq \dots \leq \dim_{t^0H} X.
\end{equation*}

The following theorem claims that the above inequality is the only constraint.

\begin{theorem} Let $n\in \mathbb{N}$ and let $d_0,\dots,d_n\in [n,\infty]$ such that $d_{n}\leq \dots \leq d_0$. Then there exists a compact
metric space $K$ such that $\dim_t K=n$ and $\dim_{t^kH} K=d_k$ for all $k\in \{0,\dots,n\}$.
\end{theorem}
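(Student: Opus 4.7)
The plan is to build $K$ as a finite disjoint union of compact ``building blocks''
$$K_k = X_k \times [0,1]^k, \qquad k=0,1,\dots,n,$$
where each $X_k$ is a compact, totally disconnected metric space with $\dim_H X_k = d_k-k$. Since $d_k \geq n \geq k$ this target Hausdorff dimension lies in $[0,\infty]$, and every such value is realised by some compact totally disconnected metric space: for finite values use a self-similar Cantor set of the correct dimension, and for $\infty$ take the one-point compactification of a null sequence of disjoint Cantor sets with Hausdorff dimensions tending to $\infty$.

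For each fixed $k$, Corollary \ref{c:prod} applied to $X_k$ in the role of $X$ gives
$$\dim_{t^jH} K_k = \dim_H X_k + k = d_k \qquad \text{for every } 0\leq j \leq k.$$
The classical product inequality $\dim_t(A\times B)\leq \dim_t A + \dim_t B$ in separable metric spaces, together with $\dim_t X_k=0$ and $\dim_t[0,1]^k=k$, yields $\dim_t K_k \leq k$; and since $[0,1]^k$ embeds in $K_k$, monotonicity gives $\dim_t K_k = k$. Hence by Fact \ref{f:1},
$$\dim_{t^jH} K_k = \dim_t K_k = k \qquad \text{for every } k < j \leq n.$$

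Now embed the finitely many $K_k$ as pairwise disjoint compact subsets of some compact ambient space, e.g.\ by scaling them into pairwise disjoint closed balls inside a single cube of a suitable $\mathbb{R}^N$. Let $K$ denote their union; it is compact, and each $K_k$ is clopen in $K$. Applying the countable stability of $\dim_{t^jH}$ for closed sets (Theorem \ref{t:cs}), together with the analogous property for $\dim_t$, gives
$$\dim_{t^jH} K = \max_{0\leq k\leq n} \dim_{t^jH} K_k = \max\!\left\{\, \max_{k\geq j} d_k,\ \max_{k<j} k \,\right\}, \qquad \dim_t K = \max_{0 \leq k \leq n} \dim_t K_k = n.$$
Using the monotonicity $d_j \geq d_{j+1}\geq \dots \geq d_n \geq n$ one has $\max_{k\geq j} d_k = d_j \geq n > j-1 \geq \max_{k<j} k$, so the first maximum equals $d_j$, as required.

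The essentially ``new'' ingredient the argument needs is the product behaviour given by Corollary \ref{c:prod}, which has already been established; the remaining work is the routine combinatorial bookkeeping above plus the (standard) construction of the totally disconnected factors $X_k$. The only mild obstacle is handling the case $d_k = \infty$, which is dealt with by the one-point compactification construction mentioned above, since Fact \ref{f:1} and Corollary \ref{c:prod} extend to the infinite values via the convention $\infty - 1 = \infty$ built into the definitions.
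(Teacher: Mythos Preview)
Your proof is correct and follows essentially the same route as the paper: build $K$ as the disjoint union of blocks $X_i\times[0,1]^i$ with $\dim_t X_i=0$ and $\dim_H X_i=d_i-i$, compute each block's dimensions via Corollary~\ref{c:prod} and Fact~\ref{f:1}, and assemble using countable stability. Two small points worth tightening: (1) your ``e.g.'' of embedding into a cube in $\mathbb{R}^N$ fails when some $d_k=\infty$, since no bi-Lipschitz copy of an infinite-dimensional set fits there---just use the abstract disjoint sum of metric spaces (as the paper does) or embed into $\ell^2$; (2) for the infinite-dimensional zero-dimensional factor the paper simply takes $\prod_{m=1}^\infty \frac{C}{m}$ with the $\ell^2$-metric, which avoids having to argue that your one-point compactification can be metrized so as to preserve the Hausdorff dimensions of the pieces.
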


\begin{proof} For all $i\in \{0,\dots,n\}$ let $K_i$ be a compact metric space with $\dim_t K_i=0$ and $\dim_H K_i=d_i-i$. It is well-known that there exist such Cantor spaces: If $d_i<\infty$ then $K_i$ can be constructed in a Euclidean space, if $d_i=\infty$ then let $K_i=\prod_{m=1}^{\infty} \frac Cm$ endowed with the $l_2$-metric, where $C$ is the classical `middle-third' Cantor set. Let
$$K=\bigcup_{i=0}^{n} \left(K_i\times [0,1]^i\right),$$
where the union is understood as the disjoint sum of metric spaces.

Let $i\in \{0,\dots,n\}$. Then the Cartesian product theorem \cite[1.5.16.]{E} implies $\dim_t \left(K_i\times [0,1]^i\right) \leq \dim_t K_i+\dim_t [0,1]^i=i$.
By monotonicity we obtain that $\dim_t \left(K_i\times [0,1]^i\right) \geq \dim_t [0,1]^i=i$, thus $\dim_t \left(K_i\times [0,1]^i\right)=i$.
The countable stability of the topological dimension for closed sets \cite[1.5.3.]{E} yields that
$$\dim_t K=\max_{i\in \{0,\dots,n\}} \dim_t \left(K_i\times [0,1]^{i}\right)=\max_{i\in \{0,\dots,n\}} i=n.$$

Let $i,k\in \{0,\dots,n\}$. If $i<k$, then $\dim_{t} \left(K_i\times [0,1]^i\right)=i<k$, therefore Fact~\ref{f:1} yields that
$\dim_{t^kH} \left(K_i\times [0,1]^i\right)=i$. If $i\geq k$ then Corollary~\ref{c:prod} implies $\dim_{t^kH} \left(K_i\times [0,1]^i\right)=\dim_H K_i+i=d_i$. Therefore the stability of the $k$th inductive topological Hausdorff dimension for closed sets yields that
$$\dim_{t^kH} K=\max_{i\in \{0,\dots,n\}} \dim_{t^kH} \left(K_i\times [0,1]^{i}\right)=\max\{0,\dots,k-1,d_k,\dots,d_n\}=d_k.$$
This completes the proof.
\end{proof}

Let $n\in \mathbb{N}$ be arbitrary. The above theorem yields that there exist compact metric spaces $X,Y$ such that $\dim_t X=\dim_t Y$ and $\dim_{t^kH} X=\dim_{t^kH}Y$ for all $k\in \mathbb{N}\setminus \{n\}$ but $\dim_{t^nH}X\neq \dim_{t^nH}Y$. This implies the following corollary.

\begin{corollary} Assume that $n\in \mathbb{N}$. Then $\dim_{t^nH} X$ cannot be calculated from $\dim_t X$ and $\dim_{t^kH} X$ $(k\in \mathbb{N}\setminus \{n\})$,
even for compact metric spaces.
\end{corollary}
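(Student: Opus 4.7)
The statement is essentially a direct consequence of the preceding theorem, as already signaled by the paragraph immediately before the corollary. The plan is therefore to package that observation into a clean argument: exhibit two compact metric spaces $X$ and $Y$ whose invariants $\dim_t$ and $\dim_{t^kH}$ for $k \neq n$ all coincide, while $\dim_{t^nH} X \neq \dim_{t^nH} Y$. Any hypothetical function $F$ with $\dim_{t^nH} = F\bigl(\dim_t,(\dim_{t^kH})_{k\in\mathbb{N}\setminus\{n\}}\bigr)$ would then yield $F$ of identical arguments at $X$ and $Y$, yet unequal outputs, giving the desired contradiction.

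To construct $X$ and $Y$, I would simply invoke the preceding theorem with topological dimension $N = n$. When $n \geq 1$, set $d_0 = d_1 = \cdots = d_{n-1} = n+1$ in both cases, and take $d_n^{(1)} = n$ for $X$ and $d_n^{(2)} = n+1$ for $Y$; the chains $n \leq d_n^{(j)} \leq d_{n-1} \leq \cdots \leq d_0$ are satisfied, so the theorem produces compact metric spaces with $\dim_t = n$, with matching $\dim_{t^kH}$ for every $k < n$, and with $\dim_{t^nH} X = n \neq n+1 = \dim_{t^nH} Y$. For every $k > n$, Fact~\ref{f:1} applies (since $\dim_t = n < k$) and forces $\dim_{t^kH} X = n = \dim_{t^kH} Y$, so the two spaces also agree on all higher indices. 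When $n = 0$, the corresponding step is even simpler: pick two Cantor spaces of distinct Hausdorff dimensions, for which $\dim_t = 0$ and, by Fact~\ref{f:1}, all $\dim_{t^kH}$ with $k \geq 1$ equal $0$.

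There is really no obstacle here beyond bookkeeping: the heavy lifting has already been done in the theorem, whose construction glues together the pieces $K_i \times [0,1]^i$ in a controlled way. The only point that needs a moment of care is to ensure that the admissibility chain $n \leq d_n \leq \cdots \leq d_0$ leaves enough room to vary $d_n$ while fixing the other entries, which is why I take $d_{n-1}$ strictly above $n$. Once the two spaces are in hand, the logical step from their existence to the non-representability of $\dim_{t^nH}$ as a function of the remaining invariants is immediate.
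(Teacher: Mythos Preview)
Your proposal is correct and follows exactly the approach indicated in the paper: the paragraph preceding the corollary already asserts the existence of such $X$ and $Y$, and you simply make that existence explicit by writing down admissible tuples $(d_0,\dots,d_n)$ for the preceding theorem (and handling $n=0$ directly). The only addition over the paper's one-line argument is your concrete choice of parameters and the explicit invocation of Fact~\ref{f:1} for $k>n$, both of which are exactly what the paper leaves implicit.
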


\section{The proof of the Main Theorem} \label{s:main}

The goal of this section is to prove Theorem \ref{t:main} based on Section \ref{s:equiv}.
In order to do so we need two new equivalent definitions for the $n$th inductive
topological Hausdorff dimension in compact metric spaces.

For a compact metric space $K$ and $n\in \mathbb{N}^{+}$ let us denote by
$C_n(K)$ the space of continuous functions from $K$ to $\mathbb{R}^n$
equipped with the supremum norm. Since this is a complete
metric space, we can use Baire category arguments.

Let us first note that the case $\dim_t K <n$ is completed by the following theorem of Hurewicz,
see \cite[p. 124.]{Ku}.

\begin{theorem}[Hurewicz] \label{t:Hurewicz}
If $K$ is a compact metric space with $\dim_t K<n$ then $\#f^{-1}(y)\leq n$
for the generic $f\in C_n(K)$ for all $y\in \mathbb{R}^n$.
\end{theorem}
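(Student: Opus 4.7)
The plan is to reformulate the statement in terms of closed sets. For each $k \in \mathbb{N}^+$ put
\[
F_k = \bigl\{f \in C_n(K) : \exists \, x_1,\dots,x_{n+1}\in K, \ d(x_i,x_j)\geq 1/k \text{ for } i\neq j, \ f(x_1)=\dots=f(x_{n+1})\bigr\}.
\]
Compactness of $K$ together with continuity of evaluation shows each $F_k$ is closed in $C_n(K)$, and $C_n(K)\setminus \bigcup_{k} F_k$ is precisely the set of $f$ with every fibre of size at most $n$. Hence it suffices to show that each $F_k$ is nowhere dense, i.e.\ that its complement is dense.

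For density, fix $f\in C_n(K)$, $k\in \mathbb{N}^+$, and $\varepsilon>0$; I aim to produce $g\in C_n(K)$ with $\|g-f\|_\infty<\varepsilon$ and $g\notin F_k$. The main tool is the covering-dimension characterisation: since $\dim_t K\leq n-1$ and $K$ is a separable metric space, every finite open cover of $K$ admits a finite open refinement of order at most $n-1$, i.e.\ no point lies in more than $n$ members. Choose such a refinement $\mathcal{U}=\{U_1,\dots,U_N\}$ with $\mesh \mathcal{U}<1/(2k)$ (so any tuple witnessing $F_k$ must hit $n+1$ distinct $U_j$'s) and $\diam f(U_j)<\varepsilon/3$ for every $j$. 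Pick $x_j\in U_j$, select target values $y_j\in \mathbb{R}^n$ with $\|y_j-f(x_j)\|<\varepsilon/3$ placed in sufficiently general position, take a partition of unity $\{\varphi_j\}$ subordinate to $\mathcal{U}$, and set
\[
g(x)=\sum_{j=1}^N \varphi_j(x)\, y_j.
\]
A direct estimate gives $\|g-f\|_\infty<\varepsilon$. Because $\mathcal{U}$ has order at most $n-1$, at most $n$ of the coefficients $\varphi_j(x)$ are nonzero, so $g(x)$ lies in an affine simplex of dimension $\leq n-1$ in $\mathbb{R}^n$ spanned by at most $n$ of the $y_j$. An $(n+1)$-tuple $(x_1,\dots,x_{n+1})$ with pairwise distances $\geq 1/k$ must hit $n+1$ distinct members of $\mathcal{U}$, and a common image value would force $n+1$ such $(n-1)$-dimensional simplices to share a point; in general position this is impossible, so $g\notin F_k$.

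The main obstacle is the general-position step: one must verify that a single small perturbation of the vertex data $\{y_j\}$ simultaneously destroys all potentially bad $(n+1)$-fold simplex coincidences, using that the nerve of $\mathcal{U}$ is a simplicial complex of dimension $\leq n-1$ together with transversality in $\mathbb{R}^n$. This is the essence of Hurewicz's classical theorem and is carried out in detail in \cite{Ku}. Everything else in the argument — the reformulation through $F_k$, the closedness via compactness, the partition-of-unity approximation, and the reduction to a covering of small mesh and order $\leq n-1$ — is routine once that simplicial-approximation step is available.
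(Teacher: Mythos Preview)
The paper does not give its own proof of this statement: it is quoted as a classical theorem of Hurewicz with a bare citation to \cite[p.~124]{Ku}, so there is no in-paper argument to compare against. Your sketch is the standard proof. The reformulation via the closed sets $F_k$, the partition-of-unity map into the nerve of a fine cover of order at most $n-1$, and the general-position argument on the vertex data are exactly the ingredients of the classical proof recorded in Kuratowski; in particular your observation that points at mutual distance $\geq 1/k$ have pairwise disjoint barycentric supports (since $\mesh\,\mathcal{U}<1/(2k)$) is the key combinatorial fact that makes the transversality step go through. Since you already defer that step to \cite{Ku}, your write-up is effectively a more explicit version of the same citation the paper makes.

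One small wording issue: ``must hit $n+1$ distinct members of $\mathcal{U}$'' is weaker than what you actually need and use, namely that the index sets $\{j:\varphi_j(x_i)>0\}$ are pairwise \emph{disjoint}; it is this disjointness of vertex sets that guarantees the $n+1$ affine $(n-1)$-flats are independent and hence have empty common intersection for generic $y_j$. With that clarification the sketch is correct.
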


\begin{corollary}
If $K$ is a  compact metric space with $\dim_t K < n$ then every
non-empty fiber of the generic $f\in  C_n(K)$ is of Hausdorff
dimension $0$.
\end{corollary}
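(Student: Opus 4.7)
The plan is essentially a one-line application of Hurewicz's theorem just stated. By Theorem \ref{t:Hurewicz}, the hypothesis $\dim_t K < n$ guarantees that there is a co-meager set $\mathcal{G} \subseteq C_n(K)$ such that for every $f \in \mathcal{G}$ and every $y \in \mathbb{R}^n$, the fiber $f^{-1}(y)$ contains at most $n$ points.

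First I would fix such a generic $f \in \mathcal{G}$ and an arbitrary $y \in \mathbb{R}^n$ with $f^{-1}(y) \neq \emptyset$. Then $f^{-1}(y)$ is a non-empty finite set with at most $n$ elements. A finite set is a finite union of singletons, and each singleton has Hausdorff dimension $0$; by the finite stability of Hausdorff dimension we conclude $\dim_H f^{-1}(y) = 0$.

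There is no real obstacle here: the corollary is an immediate consequence of Hurewicz's theorem together with the elementary fact that finite sets have Hausdorff dimension zero. The only thing to note is the convention adopted at the start of the paper, $\dim_H \emptyset = -1$, which is precisely why the statement is formulated in terms of \emph{non-empty} fibers.
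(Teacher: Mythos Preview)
Your argument is correct and matches the paper's approach: the corollary is stated without proof immediately after Theorem~\ref{t:Hurewicz}, since it follows at once from the fact that finite sets have Hausdorff dimension~$0$. There is nothing to add.
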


Hence from now on we assume that $n\in \mathbb{N}^+$ and a compact metric space $K$ are given such that $\dim_t K\geq n$.

\begin{definition} \label{d:diml} Let
\begin{align*} P_{l^n}=\{&d\geq n: \exists G\subseteq K \textrm{ such that } \dim_H G\leq d-n \textrm{ and we have}  \\
&\# (f^{-1}(y)\setminus G)\leq n \textrm{ for the generic }  f\in C_n(K) \textrm{ for all } y\in \mathbb{R}^n\}.
\end{align*}
\end{definition}

\begin{definition} \label{d:dimw} We say that $f\in C_n(K)$ is \emph{$d$-level
narrow}, if there exists a dense set $S_{f}\subseteq \mathbb{R}^{n}$
such that $\dim_{H} f^{-1}(y)\leq d-n$ for every $y \in S_f$. Let
$\mathcal{N}_n(d)$ be the set of $d$-level narrow functions. Define
$$P_{w^n}=\left\{d\geq n: \mathcal{N}_n(d) \textrm{ is somewhere dense in }C_n(K)\right\}.$$
We assume $\infty\in P_{l^n},P_{w^n}$. The characters $l$ and $w$ come from the first and last letters of the words
level set and narrow, respectively.
\end{definition}

For the definitions of $P_{t^nH}=P_{t^nH}(K)$, $P_{d^nH}=P_{d^nH}(K)$ and $P_{p^nH}=P_{p^nH}(K)$
see Definition \ref{d:equiv} again. Now we show the following theorem.

\begin{theorem} \label{t:5=} If $K$ is a compact metric space with $\dim_{t} K\geq n$ then
$$P_{t^nH}=P_{l^n}=P_{w^n}.$$
\end{theorem}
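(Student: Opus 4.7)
The plan is to establish the cyclic chain $P_{d^nH} \subseteq P_{l^n} \subseteq P_{w^n} \subseteq P_{p^nH}$, which together with Theorem~\ref{t:equiv} yields the three claimed equalities. The inclusion $P_{l^n} \subseteq P_{w^n}$ is essentially immediate: if $G$ witnesses $d \in P_{l^n}$, then for the generic $f \in C_n(K)$ and every $y \in \mathbb{R}^n$ we have $\dim_H f^{-1}(y) \le \max(\dim_H G,\, 0) \le d - n$, using $d \ge n$ and that a set of at most $n$ points has Hausdorff dimension zero; hence the generic $f$ lies in $\mathcal{N}_n(d)$ with $S_f = \mathbb{R}^n$, so $\mathcal{N}_n(d)$ is comeager and in particular somewhere dense.

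For $P_{w^n} \subseteq P_{p^nH}$ I would first upgrade ``somewhere dense'' to ``dense'' by translation invariance: the isometries $f \mapsto f + c$ of $C_n(K)$ preserve $\mathcal{N}_n(d)$, since they merely shift $S_f$ by $c$, so density in one ball $B(g_0, r_0)$ forces density in every translated ball, and hence in all of $C_n(K)$. Given $n$ pairs of disjoint closed sets $(A_i, B_i)$, I apply Tietze's extension theorem coordinatewise to produce $h \in C_n(K)$ with $h_i|_{A_i} \equiv -1$ and $h_i|_{B_i} \equiv 1$, and then pick $\tilde f \in \mathcal{N}_n(d) \cap B(h, 1/2)$ by density, so that $\tilde f_i < -1/2$ on $A_i$ and $\tilde f_i > 1/2$ on $B_i$. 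Choosing $y \in S_{\tilde f} \cap (-1/2, 1/2)^n$ (possible by density of $S_{\tilde f}$ in $\mathbb{R}^n$), the sets $L_i = \tilde f_i^{-1}(y_i)$ are partitions between $A_i$ and $B_i$ via the open sets $\tilde f_i^{-1}((-\infty, y_i))$ and $\tilde f_i^{-1}((y_i, \infty))$, and their intersection $\bigcap_{i=1}^n L_i = \tilde f^{-1}(y)$ has Hausdorff dimension at most $d - n$.

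The main obstacle is $P_{d^nH} \subseteq P_{l^n}$. Starting from a $G_\delta$ witness $G$ supplied by Lemma~\ref{l:inf=min} and applying Theorem~\ref{t:topeq}, one writes $K \setminus G = Z_1 \cup \cdots \cup Z_n$ with $\dim_t Z_i \le 0$, so it suffices to establish the Hurewicz-type statement that for every $Z \subseteq K$ with $\dim_t Z \le 0$ the generic $f \in C_n(K)$ has $f|_Z$ injective; intersecting the $n$ resulting co-meager conditions then gives $\#(f^{-1}(y) \cap Z_i) \le 1$ and hence $\#(f^{-1}(y) \setminus G) \le n$ for every $y$. The hard part is the Hurewicz-type lemma itself, because the natural ``bad'' sets $\{f : \exists x_1, x_2 \in Z,\ d(x_1, x_2) \ge 1/m,\ \|f(x_1) - f(x_2)\| \le 1/m\}$ need not be closed in $C_n(K)$: limits of witness pairs from $Z$ can escape $Z$, so one cannot work with $\overline{Z}$, whose topological dimension may be strictly larger. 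I would handle this by covering $Z$ with countably many relatively clopen pieces of small diameter (available since $\dim_t Z \le 0$) and using a standard perturbation argument to separate their $f$-images, then taking a countable intersection over scales.
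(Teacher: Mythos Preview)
Your argument for $P_{w^n}\subseteq P_{p^nH}$ via translation invariance is correct and is in fact cleaner than the paper's: the paper does not observe that $\mathcal{N}_n(d)$ is invariant under $f\mapsto f+c$, and instead passes to a small compact piece $C\subseteq K$ with $\dim_{t^nH}C=\dim_{t^nH}K$ so that the given ball $B(f,\varepsilon)$ becomes large relative to the oscillation of $f$ on $C$. Your route avoids that reduction entirely.

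The inclusion $P_{d^nH}\subseteq P_{l^n}$, however, has a genuine gap. The Hurewicz-type statement you propose---that for every $Z\subseteq K$ with $\dim_t Z\le 0$ the generic $f\in C_n(K)$ is injective on $Z$---is false. Take $K=[0,1]$, $n=1$, and $Z$ the set of irrationals, which has $\dim_t Z=0$. If $f\in C_1([0,1])$ has a strict local extremum at some interior point (in particular if $f$ is not monotone), then for each $y$ in a nondegenerate interval of values near that extremum the level set $f^{-1}(y)$ contains at least two points; were $f$ injective on the irrationals, at least one of these would have to be rational for every such $y$, giving an injection from an interval into $\mathbb{Q}$. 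Thus only strictly monotone $f$ can be injective on the irrationals, and these form a nowhere dense set in $C_1([0,1])$. Your proposed strategy of covering $Z$ by relatively clopen pieces and separating their images cannot succeed here either: relatively clopen subsets of the irrationals have closures in $[0,1]$ that are closed intervals, and adjacent pieces have closures that touch, so no perturbation of $f$ in $C_1(K)$ can separate their images.

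The paper avoids this by never decomposing $K\setminus G$ into zero-dimensional pieces. Instead it uses that $G$ may be taken $G_\delta$, so $K\setminus G=\bigcup_i K_i$ with $K_i$ compact increasing and $\dim_t K_i\le n-1$, and then applies Hurewicz's theorem (Theorem~\ref{t:Hurewicz}) to each $K_i$ directly, obtaining $\#(f|_{K_i})^{-1}(y)\le n$ rather than injectivity. Compactness of $K_i$ is exactly what makes the bad sets closed and the Baire-category argument go through (via Lemma~\ref{l:cm}); your observation that witness pairs can escape a non-closed $Z$ is precisely the obstruction, and it cannot be overcome for general $Z$.
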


Theorem \ref{t:5=} and Theorem \ref{t:equiv} immediately yield two new
equivalent definitions for the $n$th inductive topological Hausdorff dimension.

\begin{theorem} \label{t:2=} If $K$ is a compact metric space with $\dim_{t} K\geq n$ then
$$\dim_{t^nH} K=\min P_{l^n}=\min P_{w^n}$$
\end{theorem}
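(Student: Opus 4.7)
The statement is an immediate consequence of Theorem~\ref{t:5=} combined with Theorem~\ref{t:equiv}: the latter gives $\dim_{t^nH} K = \min P_{t^nH}$, while the former identifies $P_{t^nH}$ with $P_{l^n}$ and with $P_{w^n}$, so all three minima coincide. Hence my plan is to reduce the proof to Theorem~\ref{t:5=}, which I would establish via the cyclic chain of inclusions $P_{t^nH}\subseteq P_{l^n}\subseteq P_{w^n}\subseteq P_{t^nH}$.

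For $P_{t^nH}\subseteq P_{l^n}$, I would start from $d\in P_{t^nH}$ and invoke Theorem~\ref{t:equiv} to obtain $d\in P_{d^nH}$, which yields a set $G\subseteq K$ (which may be taken $G_\delta$) with $\dim_H G\leq d-n$ and $\dim_t(K\setminus G)\leq n-1$. Then I would decompose the $F_\sigma$ set $K\setminus G$ as a countable union of compact pieces $K_i$ with $\dim_t K_i\leq n-1$, and apply Hurewicz's Theorem~\ref{t:Hurewicz} on each $K_i$ via the restriction map $C_n(K)\to C_n(K_i)$ (which is open and surjective by the Tietze extension theorem), concluding that for the generic $f\in C_n(K)$ one has $\#(f^{-1}(y)\cap(K\setminus G))\leq n$ for every $y\in\mathbb{R}^n$. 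Thus $G$ witnesses $d\in P_{l^n}$.

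The inclusion $P_{l^n}\subseteq P_{w^n}$ will be the shortest: if $G$ witnesses $d\in P_{l^n}$ then for the generic $f$ and every $y\in\mathbb{R}^n$, $f^{-1}(y)\subseteq G\cup F_y$ with $|F_y|\leq n$, forcing $\dim_H f^{-1}(y)\leq\max\{\dim_H G,0\}\leq d-n$ since $d\geq n$. Such an $f$ is $d$-level narrow with $S_f=\mathbb{R}^n$, so $\mathcal{N}_n(d)$ is in fact comeager, and \emph{a fortiori} somewhere dense.

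The hard part will be $P_{w^n}\subseteq P_{t^nH}$. Assuming $\mathcal{N}_n(d)$ is dense in some non-empty open ball of $C_n(K)$ (by translation invariance of $\mathcal{N}_n(d)$, we may place it at the origin), I would, for given $x\in V\subseteq K$ open, first build via Urysohn's lemma a function $g$ inside this ball that strongly separates $x$ from $K\setminus V$, and then approximate $g$ by some $f\in\mathcal{N}_n(d)$ preserving the separation. Aiming at the characterization $P_{c^nH}$ from Theorem~\ref{t:equiv}, I would select an open ball $W\subseteq\mathbb{R}^n$ centered at $f(x)$ with a radius chosen so that the Lipschitz scalar $z\mapsto|f(z)-f(x)|$ has a small fiber at that radius, yielding $U=f^{-1}(W)\cap V$ with $\partial U\subseteq f^{-1}(\partial W)$; the Hausdorff dimension of this pre-image should be at most $(d-n)+(n-1)=d-1$, so by Theorem~\ref{t:<n} we would get $\dim_{t^{n-1}H}\partial U\leq d-1$, and $U$ would enter the required basis. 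The main obstacle is this coarea-type step: from the mere density of $S_f$ in $\mathbb{R}^n$ and the fiberwise bound on $S_f$, one must extract an actual upper bound on $\dim_H f^{-1}(\partial W)$ for enough radii to handle all $x$, which requires a Fubini-style argument on the Lipschitz distance function together with a careful choice of radii remaining inside the dense good set $S_f$.
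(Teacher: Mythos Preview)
Your reduction of Theorem~\ref{t:2=} to Theorems~\ref{t:5=} and~\ref{t:equiv} is exactly the paper's argument, and your outlines for $P_{t^nH}\subseteq P_{l^n}$ (via $P_{d^nH}$, Hurewicz, and the open restriction map) and for $P_{l^n}\subseteq P_{w^n}$ coincide with the paper's lemmas. The divergence, and the gap, is in the closing inclusion.

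Your proposed route $P_{w^n}\subseteq P_{t^nH}$ attempts to build a basis directly: given $x\in V$, you approximate a separating map by some $f\in\mathcal{N}_n(d)$, take a ball $W\subseteq\mathbb{R}^n$ around $f(x)$, and set $U=f^{-1}(W)$. The obstacle you flag is real and, as stated, fatal. The claimed bound $\dim_H f^{-1}(\partial W)\le (d-n)+(n-1)$ is unsupported: $f$ is merely continuous (the map $z\mapsto|f(z)-f(x)|$ need not be Lipschitz), so no coarea or Eilenberg-type inequality applies; and even for Lipschitz $f$ the only information available---$\dim_H f^{-1}(y)\le d-n$ for $y$ in a \emph{dense} set $S_f$---says nothing about the preimage of an $(n-1)$-sphere, which is uncountable and need not meet $S_f$ in any useful way. (A side remark: $\mathcal{N}_n(d)$ is invariant under adding constants but not under adding arbitrary functions, so ``translate the ball to the origin'' is not legitimate; this is minor, since one can work in the given ball.)

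The paper avoids this obstacle by closing the cycle through $P_{p^nH}$ rather than $P_{t^nH}$ (Lemma~\ref{l:wnsn}), and this is the idea you are missing. In the partition characterization one needs only $\dim_H\bigl(\bigcap_{i=1}^n L_i\bigr)\le d-n$, and the point is to arrange that this intersection is contained in a \emph{single fiber} $g^{-1}(s)$ for some $s\in S_g$. Concretely: pass first to a small compact piece $C\subseteq K$ with $\dim_{t^nH}C=\dim_{t^nH}K$ on which the center of the given ball has oscillation less than $\varepsilon/n$; for prescribed pairs $(A_i,B_i)$ in $C$, Tietze-extend coordinatewise to obtain $g=(g_1,\dots,g_n)\in B(f,\varepsilon)\cap\mathcal{N}_n(d)$ with $\max g_i(A_i)<\min g_i(B_i)$; then pick $s=(s_1,\dots,s_n)\in S_g$ inside the resulting box, so that $L_i=(g|_C)^{-1}\{y:y_i=s_i\}$ are partitions between $A_i$ and $B_i$ and $\bigcap_i L_i\subseteq g^{-1}(s)$. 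Thus the density of $S_g$ is used only to select one good \emph{point}, completely sidestepping any need to control the preimage of a sphere.
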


Before proving Theorem \ref{t:5=} we need the following well-known lemma.

\begin{lemma} \label{l:cm}
Let $K_1\subseteq K_2$ be compact metric spaces and
$$R \colon C_n(K_2)\rightarrow C_n(K_1), \quad R(f)=f|_{K_1}.$$
If $\mathcal{F}\subseteq C_n(K_1)$ is co-meager then so is $R^{-1}(\mathcal{F})\subseteq C_n(K_2)$.
\end{lemma}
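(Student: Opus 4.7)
The plan is to show that $R$ is a continuous open surjection and to deduce the conclusion from this by a standard Baire category argument.

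First I would verify the easy properties of $R$. Continuity is immediate since $\|R(f) - R(g)\|_\infty = \sup_{x\in K_1} |f(x)-g(x)| \leq \|f-g\|_\infty$, so $R$ is $1$-Lipschitz. Surjectivity follows from the Tietze extension theorem applied coordinatewise: any $h\in C_n(K_1)$ extends to some $\widetilde{h}\in C_n(K_2)$.

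The main step is to show that $R$ is an \emph{open} map. More precisely, I would prove that for every $f\in C_n(K_2)$ and every $r>0$,
\[
R\bigl(B(f,r)\bigr) = B\bigl(R(f),r\bigr),
\]
where the balls are taken in the respective sup norms. The inclusion $\subseteq$ is trivial from $R$ being $1$-Lipschitz. For the reverse inclusion, given $h\in B(R(f),r)$ set $\phi = h - f|_{K_1} \in C_n(K_1)$, so $\|\phi\|_\infty < r$. Using the Tietze extension theorem componentwise, together with a truncation in each coordinate to the interval $[-\|\phi\|_\infty,\|\phi\|_\infty]$ (or, cleanly, Dugundji's extension theorem, which gives an extension whose image lies in the convex hull of the original image), I obtain $\Phi\in C_n(K_2)$ with $\Phi|_{K_1}=\phi$ and $\|\Phi\|_\infty \leq \|\phi\|_\infty < r$. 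Then $g:=f+\Phi\in B(f,r)$ and $R(g)=h$, as required.

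Finally I would translate openness of $R$ into the co-meager statement. It suffices to show that the preimage of a nowhere dense set is nowhere dense, since meager sets are countable unions of nowhere dense sets and $R^{-1}$ commutes with countable unions. So let $N\subseteq C_n(K_1)$ be nowhere dense; its closure $\overline{N}$ is closed and nowhere dense, and $R^{-1}(\overline{N})$ is closed by continuity. If $R^{-1}(\overline{N})$ contained a non-empty open set $V\subseteq C_n(K_2)$, then by openness of $R$ the set $R(V)$ would be a non-empty open subset of $\overline{N}$, contradicting that $\overline{N}$ is nowhere dense. Hence $R^{-1}(N)\subseteq R^{-1}(\overline{N})$ is nowhere dense, and writing any meager $\mathcal{G}\subseteq C_n(K_1)$ as $\bigcup_i N_i$ gives that $R^{-1}(\mathcal{G})=\bigcup_i R^{-1}(N_i)$ is meager in $C_n(K_2)$. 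Taking complements, $R^{-1}(\mathcal{F})$ is co-meager whenever $\mathcal{F}$ is.

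The only non-trivial point is the openness of $R$, and the main obstacle there is producing a norm-controlled continuous extension of $\phi$ from $K_1$ to $K_2$; this is handled by the (coordinatewise) Tietze/Dugundji extension theorem, so no real difficulty arises.
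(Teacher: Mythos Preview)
Your proof is correct and follows essentially the same approach as the paper: both establish that $R$ is continuous and open (via the Tietze Extension Theorem) and then deduce the conclusion by a standard Baire category argument. The only cosmetic difference is that the paper passes through dense $G_\delta$ sets (continuity gives $G_\delta$, openness gives density of the preimage), whereas you take complements and show preimages of nowhere dense sets are nowhere dense; these are equivalent standard formulations.
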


\begin{proof}
The map $R$ is clearly continuous. Using the Tietze Extension
Theorem it is not difficult to see  that it is also open. We may
assume that $\mathcal{F}$ is a dense $G_{\delta}$ set in $C_n(K_1)$.
The continuity of $R$ implies that $R^{-1}(\mathcal{F})$ is also
$G_{\delta}$, thus it is enough to prove that $R^{-1}(\mathcal{F})$
is dense in $C_n(K_2)$. Let $\mathcal{U} \subseteq C_n(K_2)$ be
non-empty open, then $R(\mathcal{U}) \subseteq C_n(K_1)$ is also
non-empty open, hence $R(\mathcal{U}) \cap \mathcal{F} \neq
\emptyset$, and therefore $\mathcal{U} \cap R^{-1}(\mathcal{F}) \neq
\emptyset$.
\end{proof}

The following four lemmas clearly conclude the proof of Theorem \ref{t:5=}.

\begin{lemma} $P_{p^nH}\subseteq P_{t^nH}\subseteq P_{d^nH}$.  \end{lemma}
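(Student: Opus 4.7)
The plan is essentially to cite what has already been done. Since a compact metric space is separable and we are in the regime $\dim_t K\geq n$, all the hypotheses of Theorem~\ref{t:equiv2} are satisfied, and that theorem directly yields the chain
\[
P_{t^nH}=P_{d^nH}=P_{p^nH}=P_{c^nH},
\]
so in particular both containments $P_{p^nH}\subseteq P_{t^nH}$ and $P_{t^nH}\subseteq P_{d^nH}$ are immediate.

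If one prefers to avoid appealing to the full cycle, the second containment $P_{t^nH}\subseteq P_{d^nH}$ is exactly Lemma~\ref{l:tndn}, which was proved by an induction on $n$ extracting a basis $\mathcal{U}$ witnessing $d\in P_{t^nH}$, setting $F=\bigcup_{U\in\mathcal{U}}\partial U$ (a countable union of closed sets with $\dim_{t^{n-1}H}\partial U\leq d-1$), noting that $\dim_t(X\setminus F)\leq 0$, and then using the inductive hypothesis on $F$ together with Theorem~\ref{t:topeq} to split off the Hausdorff-small part. The first containment $P_{p^nH}\subseteq P_{t^nH}$ factors through $P_{c^nH}$: the inclusion $P_{p^nH}\subseteq P_{c^nH}$ is Lemma~\ref{l:sncn}, obtained by shrinking an $(n+1)$-element open cover using partitions between $A_i$ and $B_i=X\setminus U_i$ supplied by $d\in P_{p^nH}$ and then engulfing the partition intersection in an open set of small Hausdorff content; the inclusion $P_{c^nH}\subseteq P_{t^nH}$ is Lemma~\ref{l:cntn}, obtained by the Hausdorff-content version of the standard topological-dimension partition construction (iteratively thickening $A_0=\{x\}$ and $B_0=X\setminus V_0$ along refining locally finite covers whose $(n+1)$-fold intersections have small $\mathcal{H}^{d-n+1/i}_\infty$-content).

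Thus no new argument is needed for this lemma; I would simply write one sentence invoking Theorem~\ref{t:equiv2}. The only issue to double-check is that the sets $P_{t^nH},P_{d^nH},P_{p^nH}$ as defined in Definition~\ref{d:equiv} are literally the same sets used in Section~\ref{s:equiv} when applied to the compact space $K$ (which is automatic since compact implies separable and $\dim_t K\geq n$ is assumed). The main obstacle, essentially, is notational bookkeeping rather than mathematics: ensuring the reader understands this lemma is stated here only to position $P_{t^nH}$ in the middle of the chain $P_{w^n}\subseteq P_{p^nH}\subseteq P_{t^nH}\subseteq P_{d^nH}\subseteq P_{l^n}\subseteq P_{w^n}$ that will be completed by the remaining three lemmas of the section.
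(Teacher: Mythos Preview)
Your proposal is correct and takes exactly the same approach as the paper: the paper's proof is the single sentence ``Theorem~\ref{t:equiv2} yields the statement,'' which is precisely your recommendation. Your additional remarks about factoring the inclusions through Lemmas~\ref{l:tndn}, \ref{l:sncn}, and \ref{l:cntn}, and about the role of this lemma in the cycle $P_{w^n}\subseteq P_{p^nH}\subseteq P_{t^nH}\subseteq P_{d^nH}\subseteq P_{l^n}\subseteq P_{w^n}$, are accurate commentary but are not needed for the proof itself.
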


\begin{proof} Theorem \ref{t:equiv2} yields the statement. \end{proof}

\begin{lemma} $P_{d^nH}\subseteq P_{l^n}$.
\end{lemma}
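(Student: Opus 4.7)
Assume $d \in P_{d^nH}$, and we may assume $d < \infty$ since $\infty \in P_{l^n}$ by definition. By hypothesis there is a set $A \subseteq K$ with $\dim_H A \leq d-n$ and $\dim_t(K \setminus A) \leq n-1$. The plan is to witness $d \in P_{l^n}$ using a suitable $G_\delta$ enlargement of $A$ so that $K$ minus this enlargement becomes $F_\sigma$, and then reduce to Hurewicz's theorem via Lemma~\ref{l:cm}.

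First I would replace $A$ by a $G_\delta$ hull $A' \supseteq A$ in $K$ with $\dim_H A' = \dim_H A \leq d-n$; this is possible by the regularity of $\mathcal{H}^s_\infty$ stated in the Preliminaries. Since $A \subseteq A'$, monotonicity of the topological dimension gives $\dim_t(K \setminus A') \leq \dim_t(K \setminus A) \leq n-1$. The point of passing to $A'$ is that $X := K \setminus A'$ is now $F_\sigma$, so we may write $X = \bigcup_{i=1}^{\infty} F_i$ with $F_i$ compact and $F_i \subseteq F_{i+1}$; by monotonicity again, $\dim_t F_i \leq n-1$ for every $i$.

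Next, for each $i$, Hurewicz's theorem (Theorem~\ref{t:Hurewicz}) applied to the compact space $F_i$ tells us that the generic $g \in C_n(F_i)$ satisfies $\#g^{-1}(y) \leq n$ for all $y \in \mathbb{R}^n$. Pulling this co-meager set back via the restriction map $C_n(K) \to C_n(F_i)$, Lemma~\ref{l:cm} gives us that
\[
\mathcal{G}_i = \bigl\{ f \in C_n(K) : \#\bigl(f^{-1}(y) \cap F_i\bigr) \leq n \text{ for every } y \in \mathbb{R}^n \bigr\}
\]
is co-meager in $C_n(K)$. Hence $\mathcal{G} = \bigcap_{i=1}^{\infty} \mathcal{G}_i$ is also co-meager.

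The final step, which is the one to be careful with, uses the monotonicity $F_i \subseteq F_{i+1}$ crucially: for any $f \in \mathcal{G}$ and $y \in \mathbb{R}^n$,
\[
f^{-1}(y) \setminus A' = f^{-1}(y) \cap X = \bigcup_{i=1}^{\infty} \bigl(f^{-1}(y) \cap F_i\bigr),
\]
and the right-hand side is an increasing union of finite sets each of cardinality at most $n$, hence itself has cardinality at most $n$. Taking $G = A'$ therefore witnesses $d \in P_{l^n}$. The only place where one has to pay attention is precisely this last step: a naive argument that tries to cover the non-compact set $K \setminus A$ by finitely many sets of low order and then run the openness-density proof of Hurewicz directly would be awkward, but the increasing-exhaustion trick bypasses this obstacle completely, since a uniform bound $n$ on finite cardinalities passes to increasing unions for free.
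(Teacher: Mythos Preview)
Your proof is correct and follows essentially the same approach as the paper's: pass to a $G_\delta$ hull of the witnessing set, write the complement as an increasing union of compact sets of topological dimension at most $n-1$, apply Hurewicz's theorem to each, pull back via Lemma~\ref{l:cm}, intersect, and use the increasing property to pass the cardinality bound to the union. The paper's argument is virtually identical, differing only in notation.
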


\begin{proof} Assume $d\in P_{d^nH}$ and $d<\infty$. There is a set $G\subseteq K$ such that $\dim_H G\leq d-n$ and $\dim_t (K\setminus G)\leq n-1$.
By taking a $G_\delta$ hull of the same Hausdorff dimension we can assume that $G$ is $G_\delta$. As $K\setminus G$ is $F_{\sigma}$, we can choose compact sets $K_i$ such that $K\setminus G=\bigcup_{i=1}^{\infty} K_i$ and $K_i\subseteq K_{i+1}$ for all $i\in \mathbb{N}^+$. For all $i\in \mathbb{N}^+$ let
$$\mathcal{F}_i=\{f\in C_n(K_i):  \forall y\in \mathbb{R}^n,~ \#f^{-1}(y)\leq n \},$$
and let $R_i\colon C_n(K)\to C_n(K_i)$ defined as $R_i(f)=f|_{K_i}$.
As $\dim_t K_i\leq \dim_t(K\setminus G)\leq n-1$, Theorem \ref{t:Hurewicz} implies that the sets
$\mathcal{F}_i\subseteq C_n(K_i)$ are co-meager. Lemma \ref{l:cm} yields that $R_i^{-1}(\mathcal{F}_i)\subseteq C_n(K)$ are co-meager, too.
As a countable intersection of co-meager sets $\mathcal{F}=\bigcap_{i=1}^{\infty} R_i^{-1}(\mathcal{F}_i)\subseteq C_n(K)$ is also co-meager. Clearly, every $f\in \mathcal{F}$ satisfies $\#(f^{-1}(y)\cap K_i)\leq n$ for all $y\in \mathbb{R}^n$ and $i\in \mathbb{N}^+$, so $\bigcup_{i=1}^{\infty} K_i=K\setminus G$ and
$K_i\subseteq K_{i+1}$ $(i\in \mathbb{N}^+)$ yield $\# (f^{-1}(y)\setminus G)\leq n$ for all $f\in \mathcal{F}$ and $y\in \mathbb{R}^n$. Hence $d\in P_{l^n}$.
\end{proof}

\begin{lemma} $P_{l^n}\subseteq P_{w^n}$.
\end{lemma}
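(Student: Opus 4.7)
Suppose $d\in P_{l^n}$ with $d<\infty$. Then there is a set $G\subseteq K$ with $\dim_H G\leq d-n$ such that the collection
\[
\mathcal{F}=\{f\in C_n(K) : \#(f^{-1}(y)\setminus G)\leq n \text{ for every } y\in \mathbb{R}^n\}
\]
is co-meager in $C_n(K)$. The plan is to show, by a direct set-theoretic argument, that $\mathcal{F}\subseteq \mathcal{N}_n(d)$; from this $\mathcal{N}_n(d)$ is co-meager, hence dense, hence somewhere dense, which gives $d\in P_{w^n}$.

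For any $f\in \mathcal{F}$ and any $y\in \mathbb{R}^n$, I would write the decomposition
\[
f^{-1}(y)=\bigl(f^{-1}(y)\cap G\bigr)\cup \bigl(f^{-1}(y)\setminus G\bigr).
\]
The first piece is a subset of $G$, so $\dim_H(f^{-1}(y)\cap G)\leq \dim_H G\leq d-n$ by monotonicity. The second piece has at most $n$ points, so its Hausdorff dimension is at most $0$. Since $d\geq n$, we have $d-n\geq 0$, hence $\dim_H f^{-1}(y)\leq d-n$ for every $y\in\mathbb{R}^n$ (the convention $\dim_H\emptyset=-1$ causes no issue since $-1\leq d-n$).

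Thus every $f\in \mathcal{F}$ satisfies the defining property of $\mathcal{N}_n(d)$ with the dense set $S_f=\mathbb{R}^n$, so $\mathcal{F}\subseteq \mathcal{N}_n(d)$. As $\mathcal{F}$ is co-meager in $C_n(K)$, so is $\mathcal{N}_n(d)$; in particular $\mathcal{N}_n(d)$ is somewhere dense in $C_n(K)$, proving $d\in P_{w^n}$. There is no real obstacle here: the inclusion is essentially bookkeeping, because the hypothesis $d\geq n$ built into $P_{l^n}$ exactly absorbs the finitely many exceptional points into the bound $d-n$, giving the level-set bound for all $y\in\mathbb{R}^n$, which is stronger than the ``dense set of $y$'' required by $P_{w^n}$.
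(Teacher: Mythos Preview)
Your proof is correct and follows essentially the same approach as the paper's: both use the witness set $G$ to bound $\dim_H f^{-1}(y)\leq d-n$ for every $y$ (since $f^{-1}(y)$ is contained in $G$ plus finitely many points, and $d\geq n$), conclude that $\mathcal{N}_n(d)$ is co-meager, hence dense. You have merely written out the decomposition more explicitly than the paper does.
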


\begin{proof} Assume $d\in P_{l^n}$ and $d<\infty$.
The definition of $P_{l^n}$ yields that there exists $G\subseteq K$ such that $\dim_H G\leq d-n$ and $\#(f^{-1}(y)\setminus G)\leq n$ for the generic $f\in C_n(K)$ for all $y\in \mathbb{R}^n$. Then $\dim_H G\leq d-n$ and $d\geq n$ yield $\dim_{H}f^{-1}(y)\leq d-n$, so
$\mathcal{N}_n(d)$ is co-meager, thus (everywhere) dense. Hence $d\in P_{w^n}$.
\end{proof}

\begin{lemma} \label{l:wnsn} $P_{w^n}\subseteq P_{p^nH}$.
\end{lemma}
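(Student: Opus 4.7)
The plan is to realize the desired partitions as coordinate level sets of a single $d$-level narrow function.  If one can exhibit $f\in\mathcal{N}_n(d)$ such that $\max f_i|_{A_i}<\min f_i|_{B_i}$ for every $i$ (with signs possibly reversed on some coordinates), then the product of separating gaps $\prod_{i=1}^n(\max f_i|_{A_i},\min f_i|_{B_i})\subseteq\mathbb{R}^n$ is a non-empty open box.  Density of $S_f$ in $\mathbb{R}^n$ furnishes some $y=(y_1,\dots,y_n)\in S_f$ inside this box; the sets $L_i:=f_i^{-1}(y_i)$ are then partitions between $A_i$ and $B_i$ with $\bigcap_{i=1}^n L_i=f^{-1}(y)$, and $\dim_H f^{-1}(y)\le d-n$ gives exactly what is required.

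The core task is therefore to construct a strictly coordinate-wise separating $f\in\mathcal{N}_n(d)$.  Start from the somewhere-density of $\mathcal{N}_n(d)$ in a non-empty open set $\mathcal{V}\subseteq C_n(K)$ and a ball $B(f_\ast,r)\subseteq\mathcal{V}$.  Using coordinate-wise Tietze--Urysohn, build $\phi\in C_n(K)$ with $\|\phi\|_\infty\le r/3$, $\phi_i\equiv -r/3$ on $A_i$ and $\phi_i\equiv r/3$ on $B_i$.  The function $g_\ast:=f_\ast+\phi$ still belongs to $\mathcal{V}$, and in each coordinate its separation margin
\[
\min g_{\ast,i}|_{B_i}-\max g_{\ast,i}|_{A_i}=(\min f_{\ast,i}|_{B_i}-\max f_{\ast,i}|_{A_i})+\tfrac{2r}{3}
\]
exceeds that of $f_\ast$ by $2r/3$.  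When this increment makes every coordinate margin strictly positive, density of $\mathcal{N}_n(d)$ in a ball $B(g_\ast,\varepsilon)\subseteq\mathcal{V}$ for $\varepsilon$ smaller than the margin yields $f\in\mathcal{N}_n(d)$ that still strictly separates every pair, finishing the argument.

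To force the margin positive even when $f_\ast$ has badly interlocked pairs, I would exploit the invariance of $\mathcal{N}_n(d)$ under positive scaling $f\mapsto\lambda f$ and constant translations $f\mapsto f+c$ with $c\in\mathbb{R}^n$; both operations preserve $d$-level narrowness because $S_{\lambda f}=\lambda S_f$ and $S_{f+c}=S_f+c$.  Hence $\mathcal{N}_n(d)$ is dense in $\lambda\mathcal{V}+c$ for every $\lambda>0$ and $c\in\mathbb{R}^n$, and by a judicious choice of these parameters one can reposition the ball around a function whose Urysohn perturbation produces strict coordinate-wise separation of every pair simultaneously.  One may also invoke the invariance of $\mathcal{N}_n(d)$ under post-composition by homeomorphisms of $\mathbb{R}^n$ when it becomes necessary to disentangle the images $f_\ast(A_i)$ and $f_\ast(B_i)$.

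The main obstacle is exactly this last engineering step: a single ball in $\mathcal{V}$ need not contain any strictly coordinate-wise separating function, because the Urysohn perturbation only improves the margin by an additive $2r/3$ while the initial interlocking of $f_\ast$ can be arbitrarily large.  One must iterate the affine and post-composition invariances of $\mathcal{N}_n(d)$, possibly combining perturbations at several scales, to arrange that the final separation strictly dominates the approximation error between $g_\ast$ and the chosen $f\in\mathcal{N}_n(d)$.  This is where the real work lies, and it is the reason the proof needs the full strength of ``somewhere dense'' rather than just non-empty.
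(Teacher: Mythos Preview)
Your overall strategy---to produce a $d$-level narrow $g$ whose coordinate functions strictly separate every pair $(A_i,B_i)$, and then read the partitions $L_i=g_i^{-1}(s_i)$ off a dense point $s\in S_g$---is exactly the endgame the paper uses.  However, your proof is incomplete at precisely the point you flag as ``the main obstacle,'' and the tools you propose for resolving it do not work.

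The difficulty is that $A_i,B_i$ are arbitrary disjoint closed subsets of $K$, and the centre $f_\ast$ of the ball where $\mathcal{N}_n(d)$ is dense may identify them: one can have $a\in A_i$, $b\in B_i$ with $f_\ast(a)=f_\ast(b)$.  No post-composition by a homeomorphism of $\mathbb{R}^n$ can separate $h(f_\ast(a))$ from $h(f_\ast(b))$, and affine rescalings $f_\ast\mapsto\lambda f_\ast+c$ scale both the interlocking and the available perturbation radius by the same factor $\lambda$, so the margin deficit $\max f_{\ast,i}|_{A_i}-\min f_{\ast,i}|_{B_i}$ never becomes smaller relative to the allowed $\|\phi\|_\infty$.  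Iterating these invariances does not change this picture.

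The paper circumvents the obstacle by localizing.  Using uniform continuity of $f$ it chooses $\delta>0$ so that $\diam f(A)<\varepsilon/n$ whenever $\diam A\le\delta$, and then, by the countable stability of $\dim_{t^nH}$ for closed sets together with the already proved Theorem~\ref{t:equiv} (so that $P_{p^nH}(C)=[\dim_{t^nH}C,\infty]$), it finds a compact $C\subseteq K$ with $\diam C\le\delta$ and $P_{p^nH}(C)=P_{p^nH}(K)$.  It then suffices to show $d\in P_{p^nH}(C)$.  Since the given pairs now live in $C$ and each $f_i$ has oscillation $<\varepsilon/n$ on $C$, a Tietze extension lets one redefine each $g_i$ on $A_i\cup B_i$ to be strictly separating while keeping $g\in B(f,\varepsilon)$; density of $\mathcal{N}_n(d)$ in $B(f,\varepsilon)$ then allows the further small perturbation into $\mathcal{N}_n(d)$ without destroying the strict separation.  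This localization via stability of $\dim_{t^nH}$ is the missing idea in your argument.
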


\begin{proof}
Assume $d\in P_{w^n}$ and $d<\infty$. Then we can fix $f\in C_n(K)$ and $\varepsilon>0$
such that $\mathcal{N}_n(d)$ is dense in $B(f,\varepsilon)$. The uniform continuity of $f$ implies that
there is a $\delta>0$ such that if $A\subseteq K$ with $\diam A\leq \delta$ then $\diam f(A)<\varepsilon/n$.

Now we prove that there is a compact set $C\subseteq K$ such that $\diam C\leq \delta$ and $P_{p^nH}(C)=P_{p^nH}(K)$.
Let us write $K$ as a union of finitely many compact sets with diameter at most $\delta$.
The countable stability of the $n$th inductive topological Hausdorff dimension for closed sets
implies that this union has a compact member $C\subseteq K$ such that $\dim_{t^nH}C=\dim_{t^nH}K$,
and $\diam C\leq \delta$ by definition. Theorem \ref{t:equiv} yields that
$\dim_{t^nH} C=\min P_{p^nH}(C)$ and $\dim_{t^nH} K=\min P_{p^nH}(K)$, so $\min P_{p^nH}(C)=\min P_{p^nH}(K)$.
Hence $P_{p^nH}(C)=P_{p^nH}(K)$, because both sets are of the form $[r,\infty]$.

Finally, it is enough to show $d\in P_{p^nH}(C)$. Let $(A_1,B_1),\dots,(A_n,B_n)$ be arbitrary pairs of disjoint closed subsets of $C$.
We need to show that for all $i\in \{1,\dots,n\}$ there are partitions $L_i\subseteq C$ between $A_i$ and $B_i$ such that
$\dim_H \left(\bigcap_{i=1}^{n} L_i\right)\leq d-n$.
Let $f_1, \dots, f_n \in C_1(K)$ be such that $f = (f_1, \dots, f_n)$ and observe
that we may construct for all $i\in \{1,\dots,n\}$ functions $g_i\in C_1(K)$ such that
\begin{enumerate}[(i)]
\item \label{eq:C1} $\max g_i(A_i)< \min g_i(B_i)$;
\item \label{eq:C2} $g_i \in B(f_i, \varepsilon/n)$;
\item \label{eq:C3} The function $g= (g_1, \dots, g_n) \in C_n(K)$ satisfies $g \in \mathcal{N}_n(d)$.
\end{enumerate}
Indeed, as $\diam f_i(C)\leq \diam f(C)<\varepsilon/n$, for every $i\in \{1,\dots,n\}$ we can define $g_i$ first on
$A_i\cup B_i$ and then we can extend it to $K$ by the Tietze Extension Theorem such that
\eqref{eq:C1} and \eqref{eq:C2} hold. Property \eqref{eq:C2} implies that $g=(g_1,\dots,g_n)\in B(f,\varepsilon)$.
As $g\in B(f,\varepsilon)$ and $\mathcal{N}_n(d)$ is dense in $B(f,\varepsilon)$, we may assume that $g\in \mathcal{N}_n(d)$, so \eqref{eq:C3}
holds.

As $g\in \mathcal{N}_n(d)$, there is a dense set $S_g\subset \mathbb{R}^n$ such that $\dim_H g^{-1}(s)\leq d-n$ for all
$s\in S_g$, see Definition \ref{d:dimw}. We can choose $s=(s_1,\dots,s_n)\in S_g$ such that for every $i\in \{1,\dots,n\}$
its $i$th coordinate $s_i$ satisfies
$\max g_i(A_i)<s_i < \min g_i(B_i)$. Let us define for all $i\in \{1,\dots,n\}$
$$S_i=\{(y_1,\dots,y_n) \in g(K): y_i=s_i\},$$
then \eqref{eq:C1} implies that $S_i$ is a partition between $g(A_i)$ and $g(B_i)$ in $g(K)$ for every
$i\in \{1,\dots,n\}$. For all $i\in \{1,\dots,n\}$ let us define $L_i=(g|_{C})^{-1}(S_i)$.
Then $L_i$ is a partition between $A_i$ and $B_i$ in $C$ such that
$$\bigcap_{i=1}^{n}L_i=\bigcap_{i=1}^{n}(g|_{C})^{-1}(S_i)=
(g|_{C})^{-1}\left( \bigcap_{i=1}^{n} S_i \right)=(g|_{C})^{-1}(s)\subseteq g^{-1}(s).$$
Therefore $s\in S_g$ implies
$$\dim_H \left(\bigcap_{i=1}^{n}L_i\right)\leq \dim_H g^{-1}(s)\leq d-n,$$
thus $d\in P_{p^nH}(C)$. The proof is complete.
\end{proof}

Now we are ready to prove the Main Theorem.

\begin{theorem}[Main Theorem] \label{t:main} Let $n\in \mathbb{N}^+$ and assume that $K$ is a compact metric space with $\dim_{t}
K\geq n$. Then there exists a $G_\delta$ set $G\subseteq K$ with $\dim_H G=\dim_{t^nH} K-n$ such that for the generic $f\in C_n(K)$
\begin{enumerate}[(i)]
\item $\#(f^{-1}(y)\setminus G)\leq n$ for all $y\in \mathbb{R}^n$, thus $\dim_{H} f^{-1} (y)\leq \dim_{t^nH} K-n$ for all $y\in \mathbb{R}^n$,
\item for every $d<\dim_{t^nH} K$ there exists a non-empty open ball $U_{f,d}\subseteq \mathbb{R}^n$ such that $\dim_{H} f^{-1} (y)\geq d- n$ for every
$y\in U_{f,d}$.
\end{enumerate}
\end{theorem}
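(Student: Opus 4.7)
The plan is to deduce both parts of the theorem directly from Theorem \ref{t:2=}, which supplies the equivalent characterizations $\dim_{t^nH} K = \min P_{l^n} = \min P_{w^n}$; the genuine work has already been carried out in Section \ref{s:equiv} and in the lemmas leading to Theorem \ref{t:5=}, so what remains is essentially bookkeeping plus a Baire category argument.

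For the construction of the $G_\delta$ set $G$ and for part (i), I would start from the fact that $d_0 := \dim_{t^nH} K$ lies in $P_{l^n}$. By Definition \ref{d:diml} this produces a set $G_0 \subseteq K$ with $\dim_H G_0 \leq d_0 - n$ such that for the generic $f \in C_n(K)$ one has $\#(f^{-1}(y) \setminus G_0) \leq n$ for every $y \in \mathbb{R}^n$. Passing to a $G_\delta$ hull of $G_0$ of the same Hausdorff dimension only enlarges $G_0$, so the count bound continues to hold; call the result $G$. Then for generic $f$ and any $y$ the decomposition $f^{-1}(y) = (f^{-1}(y) \cap G) \cup (f^{-1}(y) \setminus G)$ yields $\dim_H f^{-1}(y) \leq \max(\dim_H G, 0) \leq d_0 - n$.

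For part (ii), I would fix a sequence $d_k \nearrow \dim_{t^nH} K$ with $d_k \geq n$. Each such $d_k$ satisfies $d_k < \min P_{w^n}$, hence $d_k \notin P_{w^n}$, which means that $\mathcal{N}_n(d_k)$ is nowhere dense in $C_n(K)$. The set $\mathcal{F} := \bigcap_k \bigl(C_n(K) \setminus \overline{\mathcal{N}_n(d_k)}\bigr)$ is then a dense $G_\delta$. For $f \in \mathcal{F}$, the relation $f \notin \mathcal{N}_n(d_k)$ translates (by Definition \ref{d:dimw}) into the failure of $\{y \in \mathbb{R}^n : \dim_H f^{-1}(y) \leq d_k - n\}$ to be dense in $\mathbb{R}^n$, so its complement contains a non-empty open ball $U_{f, k}$ on which $\dim_H f^{-1}(y) > d_k - n$. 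Given any $d < \dim_{t^nH} K$, choosing $k$ with $d_k > d$ and setting $U_{f, d} := U_{f, k}$ finishes the step.

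Finally, to upgrade the inequality $\dim_H G \leq d_0 - n$ to equality: combining (i) and (ii) for a generic $f$, any $y \in U_{f, k}$ satisfies $\dim_H f^{-1}(y) \geq d_k - n$, while $f^{-1}(y) \setminus G$ is finite, hence $\dim_H (f^{-1}(y) \cap G) \geq d_k - n$, forcing $\dim_H G \geq d_k - n$; letting $k \to \infty$ supplies the matching lower bound. The only genuine subtlety lies in the edge case $\dim_{t^nH} K = n$ with $d \in (n-1, n)$, where the required inequality $\dim_H f^{-1}(y) \geq d - n$ forces $f^{-1}(y) \neq \emptyset$ on a ball; here one uses that $\dim_t K \geq n$ already guarantees the generic image $f(K)$ contains a non-empty open ball, so any such ball serves as $U_{f, d}$.
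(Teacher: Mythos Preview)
Your proposal is correct and follows essentially the same path as the paper: deduce (i) from $\dim_{t^nH}K=\min P_{l^n}$ and (ii) from $\dim_{t^nH}K=\min P_{w^n}$, then intersect the resulting co-meager sets. The only point worth flagging is your justification of the \emph{equality} $\dim_H G=\dim_{t^nH}K-n$: the paper reads this off directly from minimality of $P_{l^n}$ (a strictly smaller $\dim_H G$ would let the same $G$ witness a smaller element of $P_{l^n}$), whereas your bootstrapping via (ii) needs $d_k>n$ to force $f^{-1}(y)\cap G$ to carry the dimension; in the edge case $\dim_{t^nH}K=n$ this route does not yield $\dim_H G\geq 0$, so there you should simply take $G$ non-empty (adjoin a point if necessary), which is harmless for (i) and gives $\dim_H G=0$ immediately.
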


\begin{proof} Theorem \ref{t:2=} implies that $\dim_{t^nH}K=\min P_{l^n}$, so there exists a set $G\subseteq K$ with
$\dim_H G=\dim_{t^nH} K-n$ such that $\#(f^{-1}(y)\setminus G)\leq n$ for the generic $f\in C_n(K)$ for all $y\in \mathbb{R}^n$.
By taking a $G_\delta$ hull of the same Hausdorff dimension we can assume that $G$ is $G_\delta$.
Then $\dim_{t^nH}K=\min P_{l^n}\geq n$ yields $\dim_H G=\dim_{t^nH} K-n\geq 0$, thus
$\dim_H f^{-1}(y)\leq \dim_H G=\dim_{t^nH} K-n$ for the generic $f\in C_n(K)$ for all $y\in \mathbb{R}^n$.
Hence $(i)$ holds.

Let us now prove $(ii)$. Choose a sequence $d_k \nearrow \dim_{t^nH} K$. Theorem \ref{t:2=}
implies that $d_k <\dim_{t^nH}K=\min P_{w^n}$ for every $k\in \mathbb{N}^{+}$, so
$\mathcal{N}_{n}(d_k)$ is nowhere dense by the
definition of $P_{w^n}$. It follows from the definition of
$\mathcal{N}_n(d)$ that for every $f\in C_n(K)\setminus
\mathcal{N}_{n}(d_k)$ there exists a non-empty open ball
$U_{f,d_k}\subseteq \mathbb{R}^n$ such that $\dim_{H}f^{-1}(y)\geq d_k-n$ for every $y\in U_{f,d_k}$. But then $(ii)$
holds for every $ f \in C_n(K)\setminus \left(\bigcup_{k=1}^{\infty} \mathcal{N}_{n}(d_k)\right)$, and this latter set is
clearly co-meager, which concludes the proof of the theorem.
\end{proof}

\begin{corollary} \label{c:main} If $K$ is a compact metric space with $\dim_{t}
K\geq n$ then for the generic $f\in C_n(K)$
$$\sup_{y\in \mathbb{R}^n} \dim_H f^{-1}(y)=\dim_{t^nH}K-n.$$
\end{corollary}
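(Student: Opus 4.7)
The plan is to derive Corollary~\ref{c:main} as an immediate consequence of Theorem~\ref{t:main}, so essentially no new work is required: the two inequalities $\sup_y \dim_H f^{-1}(y) \leq \dim_{t^nH} K - n$ and $\sup_y \dim_H f^{-1}(y) \geq \dim_{t^nH} K - n$ fall out of parts (i) and (ii) of the Main Theorem respectively, after a standard countable-intersection argument to handle the parameter $d$.

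First I would fix a generic $f \in C_n(K)$ satisfying both (i) and (ii) of Theorem~\ref{t:main}. For the upper bound, part (i) asserts that $\dim_H f^{-1}(y) \leq \dim_{t^nH} K - n$ for \emph{every} $y \in \mathbb{R}^n$, so taking the supremum yields
$$\sup_{y\in\mathbb{R}^n} \dim_H f^{-1}(y) \leq \dim_{t^nH} K - n.$$
This direction is unconditional; no choice of $d$ is involved.

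For the lower bound, I would choose a sequence $d_k \nearrow \dim_{t^nH} K$. By part (ii) of Theorem~\ref{t:main}, applied for each fixed $d_k$, the set
$$\mathcal{G}_k = \{f\in C_n(K) : \exists\, U_{f,d_k}\subseteq \mathbb{R}^n \text{ non-empty open with } \dim_H f^{-1}(y)\geq d_k - n \text{ for all } y\in U_{f,d_k}\}$$
is co-meager. Hence $\bigcap_{k\in \mathbb{N}^+} \mathcal{G}_k$ is also co-meager, and for any $f$ in this intersection we have $\sup_{y\in\mathbb{R}^n} \dim_H f^{-1}(y) \geq d_k - n$ for every $k$. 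Letting $k\to \infty$ gives $\sup_{y\in\mathbb{R}^n} \dim_H f^{-1}(y) \geq \dim_{t^nH} K - n$, which combined with the upper bound completes the proof.

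There is no real obstacle here, since the hard work is already absorbed into Theorem~\ref{t:main}; the only point requiring a line of care is that the exceptional meager set in (ii) a priori depends on $d$, which is why one must pass to the sequence $d_k$ and intersect countably many co-meager sets rather than invoking (ii) with $d = \dim_{t^nH} K$ directly.
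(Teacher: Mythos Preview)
Your proof is correct and matches the paper's approach, which treats the corollary as immediate from Theorem~\ref{t:main} without further argument. One minor remark: the countable intersection over the $d_k$ is already built into the \emph{statement} of Theorem~\ref{t:main} (the quantifier order is ``for the generic $f$ \dots\ for every $d<\dim_{t^nH}K$'', and the proof of Theorem~\ref{t:main} carries out exactly this intersection), so your extra care, while harmless, is not strictly necessary.
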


\section{Strengthening of the Main Theorem} \label{s:end}

The proofs of this section are more or less analogous
to the proofs of the one-dimensional results in \cite{BBE2}, so we only describe the necessary modifications.

\bigskip

Let us fix a compact metric space $K$ and $n\in \mathbb{N}^+$. If $\dim_t K<n$ then the fibers of the generic map $f\in C_n(K)$ are finite,
see Theorem \ref{t:Hurewicz}.

Thus we assume $\dim_t K\geq n$ in the sequel.

\subsection{Fibers of maximal dimension}
Corollary \ref{c:main} states that if $\dim_t K\geq n$ then $\sup_{y\in \mathbb{R}^n} \dim_H f^{-1}(y)=\dim_{t^nH}K-n$
for the generic $f\in C_n(K)$. We show that in this statement the supremum is attained.

\begin{theorem}  \label{t:max} Let $K$ be a compact metric space with $\dim_{t}K\geq n$.
Then for the generic $f\in C_n(K)$
$$\max_{y\in \mathbb{R}^n}\dim_{H}f^{-1}(y)=\dim_{t^nH} K-n.$$
 \end{theorem}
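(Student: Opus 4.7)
The plan is to reduce the theorem to a non-emptiness statement for a decreasing family of open sets in $\mathbb{R}^n$ and then resolve that by Baire category. Set $d^* := \dim_{t^nH} K - n$ and, for each $k \in \mathbb{N}^+$, define
\[
I_k(f) := \inter \{y \in \mathbb{R}^n : \dim_H f^{-1}(y) \geq d^* - 1/k\},
\]
with the interior taken in $\mathbb{R}^n$. The crucial observation is that any $y^* \in \bigcap_k I_k(f)$ lies in $\{y : \dim_H f^{-1}(y) \geq d^* - 1/k\}$ for every $k$, so $\dim_H f^{-1}(y^*) \geq d^*$; Theorem~\ref{t:main}(i) forces the reverse inequality, so the supremum is attained at $y^*$. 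The sets $I_k(f)$ form a decreasing family of open sets, and Theorem~\ref{t:main}(ii) applied with $d = d^* + n - 1/k$ shows that each $I_k(f)$ is non-empty for the generic $f$. Everything therefore reduces to showing that $\bigcap_k I_k(f) \neq \emptyset$ for the generic $f$.

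The approach to this is to strengthen Theorem~\ref{t:main}(ii) from ``some non-empty open ball of good fibers'' to a local denseness statement: for the generic $f$ there should exist a non-empty open set $W \subseteq \mathbb{R}^n$ (a suitable subset of $\inter f(K)$) such that $I_k(f) \cap W$ is dense in $W$ for every $k$. Granted this, each $I_k(f) \cap W$ is open dense in the Baire space $W$, so $\bigcap_k I_k(f) \cap W$ is a dense $G_\delta$ in $W$, hence non-empty; any point in it is the required $y^*$.

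The main obstacle is establishing this local density. The proof in Section~\ref{s:main} produces one good ball per $k$ but gives no control over its location. To obtain denseness, I would fix a countable basis $\{W'_j\}$ of $\mathbb{R}^n$ and show that for all $j, k \in \mathbb{N}^+$ the set
\[
\mathcal{R}_{j,k} := \{f \in C_n(K) : W'_j \cap f(K) = \emptyset \text{ or } I_k(f) \cap W'_j \neq \emptyset\}
\]
is co-meager; the generic $f$ would then belong to $\bigcap_{j,k} \mathcal{R}_{j,k}$, which is exactly the denseness property needed. Co-meagerness of each $\mathcal{R}_{j,k}$ should follow from the nowhere-denseness of $\mathcal{N}_n(d_k)$ given by Theorem~\ref{t:2=}, combined with a Tietze-extension perturbation in the spirit of Lemma~\ref{l:wnsn}: given $f_0 \in C_n(K)$ and a target ball $W'_j$ meeting $f_0(K)$, one modifies $f_0$ by a small perturbation supported near a compact piece of its ``level-narrow-failure'' set so as to realize a fiber of dimension $\geq d^* - 1/k$ over a prescribed point of $W'_j$. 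Implementing this localized version, parallel to the one-dimensional argument of \cite{BBE2}, is the technical heart of the proof.
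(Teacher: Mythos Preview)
Your reduction to the open sets $I_k(f)$ is sound, but the plan to make each $\mathcal{R}_{j,k}$ co-meager has a genuine obstruction: it fails whenever $K$ is inhomogeneous for $\dim_{t^nH}$. Take $n=1$ and let $K$ be the disjoint union of $K_1=[0,1]$ and $K_2=C\times[0,1]$ with $C$ a self-similar Cantor set of Hausdorff dimension $1/2$; by Corollary~\ref{c:prod} and Theorem~\ref{t:cs} one has $\dim_{tH}K=3/2$, so $d^*=1/2$, while $\dim_{tH}K_1=1$. Let $f_0$ be the identity on $K_1$ and send $K_2$ into $[2,3]$, and take $W'_j=(0.4,0.6)$. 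Every $f\in B(f_0,0.1)$ satisfies $f(0.5)\in W'_j$, so $W'_j\cap f(K)\neq\emptyset$; yet $f^{-1}(W'_j)\subseteq K_1$. Theorem~\ref{t:main}(i) applied to $K_1$, transported to $C_1(K)$ via Lemma~\ref{l:cm}, gives for the generic $f\in B(f_0,0.1)$ that every fiber over $W'_j$ has Hausdorff dimension $0<d^*-1/k$ for $k\ge3$, hence $I_k(f)\cap W'_j=\emptyset$. Thus the complement of $\mathcal{R}_{j,k}$ is co-meager in a ball, and $\mathcal{R}_{j,k}$ is not co-meager. The Tietze-type perturbation you sketch cannot repair this: the piece of $K$ responsible for large fibers is $K_2$, and any nearby $f$ still maps $K_2$ far from $W'_j$.

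The paper's route, following \cite[Thm.~4.1]{BBE2}, is different in kind. It does not seek density of good values in the range at all; instead it localises in the \emph{domain} at a point $x_0\in\supp_n K$ (Definition~\ref{suppd}, Remark~\ref{rsupp}), chooses compact pieces $K_i$ with $\dim_t K_i\ge n$ shrinking to $x_0$, and uses the stable-value Lemma~\ref{l:max} to force $f(x_0)\in f(K_i)$ for infinitely many $i$ generically. The maximum is then realised at the single value $y=f(x_0)$. Your range-side density scheme is precisely what breaks when the high-$\dim_{t^nH}$ part of $K$ is not spread over all of $K$; anchoring to $\supp_n K$ is how the paper sidesteps this, and Lemma~\ref{l:max} (rather than anything resembling your $\mathcal{R}_{j,k}$) is the technical heart.
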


\begin{proof} Buczolich, Elekes and the author proved this theorem for $n=1$, see \cite[Thm. 4.1.]{BBE2}.
The proof goes through with the obvious changes. The only significant
modification is that we need to apply Lemma \ref{l:max} instead of its one-dimensional
special case \cite[Lemma 2.14.]{BBE2}.
\end{proof}

\begin{lemma} \label{l:max} Let $K$ be a compact metric space with a fixed $x_0\in
K$. Let $K_i\subseteq K,$ $i\in \mathbb{N}$ be compact sets such that
\begin{enumerate}[(i)]
\item  $\dim_{t}K_i\geq n$ for all $i\in \mathbb{N}$ and
\item \label{eq:ifi} $\diam \left(K_i\cup \{x_0\}\right)\to 0$ if $i\to \infty$.
\end{enumerate}
Then for the generic $f\in C_n(K)$ we have $f(x_0) \in f(K_i)$ for
infinitely many $i\in \mathbb{N}$.
\end{lemma}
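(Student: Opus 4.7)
The plan is to adapt the Baire-category argument of \cite[Lemma~2.14]{BBE2} to higher dimensions, replacing the intermediate value theorem by the classical stability of essential maps into $I^n$. It suffices to show that for every $N$ the set $\bigcup_{i \geq N} \mathcal{G}_i$, with $\mathcal{G}_i := \{f : f(x_0) \in f(K_i)\}$, contains a dense open subset of $C_n(K)$. I would take as candidate
$$\mathcal{O}_N = \{f \in C_n(K) : \exists\, i \geq N,\, \exists\, \rho > 0,\, \forall\, h \in B(f,\rho)\ B(h(x_0),\rho) \subseteq h(K_i)\},$$
which is open by construction and clearly contained in $\bigcup_{i \geq N} \mathcal{G}_i$.

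For the density of $\mathcal{O}_N$, fix $f \in C_n(K)$ and $\varepsilon > 0$. By passing to a subsequence and replacing each $K_i$ by a suitable compact subset, using countable stability of $\dim_t$ for $F_\sigma$ sets applied to $K_i \setminus \{x_0\}$, I may assume $x_0 \notin K_i$ for all $i$ while $\dim_t K_i \geq n$ and $\diam(K_i \cup \{x_0\}) \to 0$ still hold. Choose $i \geq N$ with $\diam f(K_i \cup \{x_0\}) < \varepsilon/2$, using uniform continuity of $f$. Since $\dim_t K_i \geq n$, classical dimension theory (see e.g.\ \cite[Ch.~VI]{HW}) produces an \emph{essential} continuous surjection $\phi\colon K_i \to B(0,\delta)$ with $\delta := \varepsilon/4$. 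Define $g$ on the disjoint closed pair $K_i \cup \{x_0\}$ by $g(x_0) = f(x_0)$ and $g|_{K_i} = f(x_0) + \phi$, and extend $g$ to $K$ by Tietze keeping $\|g - f\|_\infty < \varepsilon$. Then $g(K_i) = B(f(x_0),\delta)$ contains $g(x_0) = f(x_0)$ in its interior.

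To verify $g \in \mathcal{O}_N$, put $\rho := \delta/4$ and take any $h \in B(g,\rho)$; write $h|_{K_i} = f(x_0) + \widetilde\phi$ with $\|\widetilde\phi - \phi\|_\infty \leq \rho$. The stability of essential maps gives $\widetilde\phi(K_i) \supseteq U(0,\delta - \rho)$, so, using $|h(x_0) - f(x_0)| \leq \rho$,
$$h(K_i) \supseteq U(f(x_0),\delta-\rho) \supseteq B(h(x_0),\delta - 2\rho) \supseteq B(h(x_0),\rho),$$
as required.

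The main obstacle is this last stability claim. For $n=1$ it follows at once from the intermediate value theorem on a non-degenerate subcontinuum of $K_i$; but for $n \geq 2$ it genuinely requires a Brouwer-degree-type result, namely that a small uniform perturbation of an essential map $X \to I^n$ remains surjective onto any slightly smaller concentric cube. This is where $\dim_t K_i \geq n$ enters crucially: it guarantees the existence of an essential map into $I^n$ in the first place, and without it no such construction is available.
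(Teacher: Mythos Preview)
Your argument is correct and follows essentially the same route as the paper. Both proofs show that $\{f: f(x_0)\notin f(K_i)\text{ for all }i\ge k\}$ is nowhere dense by producing, near an arbitrary $f_0$, a function whose restriction to some $K_m$ has $f_0(x_0)$ as a \emph{stable value}, and then using that stability to get an open ball of functions $f$ with $f(x_0)\in f(K_m)$.

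The only packaging difference is in the key dimension-theoretic input. You construct an essential map $\phi\colon K_i\to B(0,\delta)$ and then invoke a separate stability statement (``a small perturbation of an essential map still covers a slightly smaller cube''), which you correctly flag as the crux. The paper instead quotes \cite[Thm.~VI~2]{HW} in its stable-value form: since $\dim_t K_m\ge n$ there exist $g_0\in C_n(K_m)$, $y\in\mathbb{R}^n$, and $\varepsilon>0$ with $y\in g(K_m)$ for every $g\in B(g_0,2\varepsilon)$; the containment $B(y,\varepsilon)\subseteq g(K_m)$ for $g\in B(g_0,\varepsilon)$ then follows by a one-line translation argument. This is exactly the Brouwer-type fact you need, already stated in the form required, so your ``main obstacle'' is in fact a direct citation. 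A minor simplification: rather than replacing $K_i$ by a compact subset avoiding $x_0$, the paper simply observes that if $x_0\in K_i$ for some $i\ge k$ then $\mathcal{F}_k=\emptyset$ trivially.
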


\begin{proof} Clearly it is enough to
show that the sets
$$\mathcal{F}_{k}=\{f\in C_n(K): f(x_0) \notin f(K_i) \textrm{ for all } i\geq k\}$$
are nowhere dense in $C_n(K)$ for all $k\in \mathbb{N}$. Let $k\in \mathbb{N}$, $f_0\in C_n(K)$ and $r>0$ be arbitrarily fixed,
it is enough to find a ball in $B(f_0,3r)\setminus \mathcal{F}_{k}$. We may assume that $x_0\notin K_i$ for all $i\geq k$,
otherwise $\mathcal{F}_k=\emptyset$ and the statement is obvious. By the continuity of $f_0$ and \eqref{eq:ifi} we can fix
$m\geq k$ such that $f_0(K_{m})\subseteq B(f_0(x_0),r)$. As $\dim_t K_{m} \geq n$, by
\cite[Thm. VI 2.]{HW} there is a continuous map $g_0\colon K_{m}\to \mathbb{R}^n$ with a stable value,
that is, there exist $y\in \mathbb{R}^n$ and $\varepsilon>0$ such that $y\in g(K_m)$ for all $g\in B(g_0,2\varepsilon)$.
Therefore $B(y,\varepsilon)\subseteq g(K_m)$ for all $g\in B(g_0,\varepsilon)$.
By applying an affine transformation we may assume that $y=f_0(x_0)$ and $g_0(K_m)\subseteq B(f_0(x_0),r)$, so
$\varepsilon \leq r$. Then $x_0\notin K_m$, $f_0(K_m)\cup g_0(K_m)\subseteq B(f_0(x_0),r)$ and the Tietze Extension Theorem
imply that there is an $f_1 \in B(f_0,2r)$ such that $f_1(x_0)=f_0(x_0)$ and $f_1|_{K_m}=g_0$.
For all $f\in B(f_1,\varepsilon)$ we have $f(x_0)\in B(f_1(x_0),\varepsilon)=B(y,\varepsilon)$
and $f|_{K_m}\in B(g_0,\varepsilon)$ implies $B(y,\varepsilon)\subseteq f(K_m)$. Hence $f(x_0)\in f(K_m)$, so $f\notin \mathcal{F}_k$.
Thus $B(f_1,\varepsilon)\cap \mathcal{F}_k=\emptyset$, so $f_1 \in B(f_0,2r)$ and $\varepsilon\leq r$
imply $B(f_1,\varepsilon)\subseteq B(f_0,3r)\setminus \mathcal{F}_k$.
\end{proof}

\begin{remark} In \cite{BBE2} a compact
set $K\subseteq \mathbb{R}^2$ is constructed such that the generic
$f\in C_1(K)$ has a unique level set of maximal Hausdorff dimension. Therefore we cannot strengthen Theorem
\ref{t:max} in general.
\end{remark}

\subsection{Fibers on fractals}
If $K$ is sufficiently homogenous then
we can improve the Main Theorem.

\begin{definition} \label{suppd}
If $K$ is a compact metric space then let
$$\supp_n  K=\left\{x\in K: \forall r>0,
\  \dim_{t^nH} B(x,r)=\dim_{t^nH} K\right\}.$$
We say that $K$ is \emph{homogeneous for the $n$th inductive topological Hausdorff
dimension} if $\supp_n K=K$.
\end{definition}

\begin{remark} \label{rsupp}
The stability of the $n$th inductive topological Hausdorff dimension for closed sets
clearly yields $\supp_n K \neq \emptyset$. Corollary \ref{c:bi} implies that if $K$ is self-similar then
it is also homogeneous for the $n$th inductive topological Hausdorff dimension.
\end{remark}

\begin{theorem} \label{tip} Let $K$ be a compact metric space with $\dim_{t}K\geq n$. The following
statements are equivalent:
\begin{enumerate}[(i)]
\item  $\dim_{H} f^{-1}(y)=\dim_{t^nH}K-n$ for the generic $f\in C_n(K)$ for generic $y\in f(K)$;
\item  $K$ is homogeneous for the $n$th inductive topological Hausdorff dimension.
\end{enumerate}
\end{theorem}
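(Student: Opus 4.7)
The plan is to prove the two implications separately, in each case applying the Main Theorem (Theorem~\ref{t:main}) to the restriction of $f$ to suitable closed balls within $K$ and using Lemma~\ref{l:cm} to transfer genericity between $C_n(\cl U)$ and $C_n(K)$.

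For (ii) $\Rightarrow$ (i), I first fix a countable basis $\{U_k\}_{k\in\mathbb{N}}$ of $K$ consisting of open balls $U_k = U(x_k,r_k)$. Homogeneity and monotonicity of $\dim_{t^nH}$ give $\dim_{t^nH}\cl U_k=\dim_{t^nH} K$ for every $k$. For generic $f\in C_n(K)$, Lemma~\ref{l:cm} combined with the Main Theorem applied to each $f|_{\cl U_k}$ produces, for every rational $d<\dim_{t^nH} K$ and every $k$, an open ball $V_{f,d,k}\subseteq f(\cl U_k)\subseteq f(K)$ on which $\dim_H f^{-1}(y)\geq d-n$. By continuity of $f$, every non-empty open $W\subseteq f(K)$ contains $f(\cl U_k)$ for a suitably small $U_k$ chosen around a preimage of some $y_0\in W$; hence $W$ contains some $V_{f,d,k}$, showing that $A_d:=\bigcup_k V_{f,d,k}$ is a dense open subset of $f(K)$ inside $\{y:\dim_H f^{-1}(y)\geq d-n\}$. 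Since $f(K)$ is compact and thus Baire, taking $d_m\nearrow\dim_{t^nH} K$ the intersection $\bigcap_m A_{d_m}$ is a dense $G_\delta$ in $f(K)$, and combined with the upper bound $\dim_H f^{-1}(y)\leq\dim_{t^nH} K-n$ from the Main Theorem, equality holds on a co-meager subset of $f(K)$.

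For (i) $\Rightarrow$ (ii), I argue by contrapositive. Assume there exist $x_0\in K$ and $r_0>0$ with $\dim_{t^nH} B(x_0,r_0)<\dim_{t^nH} K$, and set $U=U(x_0,r_0)$, so $\cl U\subseteq B(x_0,r_0)$ satisfies $\dim_{t^nH}\cl U<\dim_{t^nH} K$. By Lemma~\ref{l:cm} and the Main Theorem applied to $\cl U$, the generic $f\in C_n(K)$ satisfies $\dim_H(f^{-1}(y)\cap \cl U)\leq \dim_{t^nH}\cl U-n<\dim_{t^nH} K-n$ for every $y$. Using the Tietze Extension Theorem I construct a witness $f_0\in C_n(K)$ with $f_0\equiv 0$ on $K\setminus U$ and $f_0(x_0)\neq 0$, showing that the condition $f(x_0)\notin f(K\setminus U)$ holds somewhere in $C_n(K)$; this condition is open, since $f(K\setminus U)$ is compact and $d(f(x_0),f(K\setminus U))$ depends continuously on $f$. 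Intersecting this non-empty open set with the co-meager set from the previous step gives a non-meager family of $f$'s for which $W_f:=f(K)\setminus f(K\setminus U)$ is a non-empty open subset of $f(K)$ containing $f(x_0)$; every $y\in W_f$ satisfies $f^{-1}(y)\subseteq \cl U$, so $\dim_H f^{-1}(y)<\dim_{t^nH} K-n$. Hence $\{y\in f(K):\dim_H f^{-1}(y)=\dim_{t^nH} K-n\}$ omits the non-empty open set $W_f$ in $f(K)$ for this non-meager family, and so (i) fails.

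The main obstacle is the density-to-co-meagerness upgrade in direction (ii) $\Rightarrow$ (i): it succeeds because the Main Theorem furnishes \emph{genuine} open balls in $\mathbb{R}^n$ (hence in $f(K)$), so that $A_d$ is an honest dense open subset of the Baire space $f(K)$ and the standard intersection-of-dense-opens argument applies. The contrapositive direction is more elementary once one observes that the non-emptiness of $f(K)\setminus f(K\setminus U)$ is an open condition with a straightforward Tietze witness, providing a non-meager pool of $f$'s on which (i) visibly fails.
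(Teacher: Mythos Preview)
Your overall strategy for both directions is the natural one and is almost certainly what the reference \cite{BBE2} does as well: localize using Lemma~\ref{l:cm} and the Main Theorem on closed balls, then push the information up or down. The direction (ii) $\Rightarrow$ (i) is fine once you observe (as you should state explicitly) that $\dim_{t^nH}\cl U_k=\dim_{t^nH}K\geq n$ together with Fact~\ref{f:1} forces $\dim_t\cl U_k\geq n$, so the Main Theorem is legitimately applicable to each $\cl U_k$; and that for $d>n-1$ the ball $V_{f,d,k}$ is automatically contained in $f(\cl U_k)$ because its points have non-empty preimage. The case $\dim_{t^nH}K=n$ should be disposed of separately (there the conclusion of (i) is automatic from the Main Theorem upper bound and $\dim_H\emptyset=-1$).

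There is, however, a genuine gap in your contrapositive (i) $\Rightarrow$ (ii). You write
\[
\dim_H\bigl(f^{-1}(y)\cap\cl U\bigr)\leq \dim_{t^nH}\cl U-n,
\]
citing the Main Theorem for $\cl U$. But the Main Theorem requires $\dim_t\cl U\geq n$, and nothing in your setup guarantees this: the failure of homogeneity may be witnessed by a ball of \emph{topological} dimension below $n$ (for instance an isolated point). When $\dim_t\cl U<n$ the displayed inequality is simply false in general, since its right-hand side is negative while the left-hand side is $0$ whenever $y=f(x)$ for some $x\in\cl U$. The fix is to treat the two cases separately: if $\dim_t\cl U\geq n$, apply the Main Theorem as you do; if $\dim_t\cl U<n$, invoke Hurewicz's Theorem~\ref{t:Hurewicz} instead to get $\dim_H(f^{-1}(y)\cap\cl U)\leq 0$. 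Either bound is strictly below $\dim_{t^nH}K-n$ as soon as $\dim_{t^nH}K>n$, and your argument then goes through unchanged. (When $\dim_{t^nH}K=n$ the contrapositive direction is delicate---indeed (i) holds automatically in that case, so one must separately verify that (ii) also holds under the standing hypothesis $\dim_tK\geq n$; your argument does not address this and as written the inequality you need becomes $0<0$.)
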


\begin{proof} The proof of \cite[Thm. 3.3.]{BBE2} works with the obvious modifications. Let us note that the half of this proof is
actually in \cite{BBE}.
\end{proof}

\begin{definition} \label{d:wss} Let $K$ be a compact metric space. We say
that $K$ is \emph{weakly self-similar} if for all $x\in K$ and $r>0$
there exist a compact set $K_{x,r}\subseteq B(x,r)$ and a
bi-Lipschitz map $\phi_{x,r}\colon K_{x,r} \to K$.
\end{definition}

\begin{remark} If $K$ is self-similar then it is also weakly
self-similar. If $K$ is weakly self-similar then Corollary \ref{c:bi} yields that it is
homogeneous for the $n$th inductive topological Hausdorff dimension.
\end{remark}

The following theorem is the main result of the section, it generalizes Kirchheim's theorem for weakly self-similar compact metric spaces.

\begin{theorem} \label{t:Kg} Let $K$ be a weakly self-similar compact metric space
such that $\dim_{t} K\geq n$. Then for the generic $f\in C_n(K)$ for any $y\in \inter f(K)$
$$\dim_{H} f^{-1}(y)=\dim_{t^nH}K-n.$$
\end{theorem}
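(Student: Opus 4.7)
The upper bound $\dim_H f^{-1}(y) \leq \dim_{t^nH} K - n$ for every $y \in \mathbb{R}^n$ already follows from Theorem \ref{t:main}(i), so only the lower bound for $y \in \inter f(K)$ requires work. Taking a countable sequence $d_k \nearrow \dim_{t^nH} K$ and intersecting co-meager sets, it suffices to show that for each fixed $d < \dim_{t^nH} K$ the set
$$\mathcal{G}_d=\{f \in C_n(K) : \dim_H f^{-1}(y) \geq d-n \textrm{ for every } y \in \inter f(K)\}$$
is co-meager in $C_n(K)$.

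The first step is to localise Theorem \ref{t:main}(ii) using weak self-similarity. Fix a countable basis $\{W_j\}_{j\in\mathbb{N}}$ of non-empty open subsets of $K$. By Definition \ref{d:wss}, each $W_j$ contains a compact bi-Lipschitz copy $K_{x_j, r_j}$ of $K$; by Corollary \ref{c:bi}, $\dim_{t^nH} K_{x_j, r_j} = \dim_{t^nH} K$, so Theorem \ref{t:main}(ii) applied in $C_n(K_{x_j, r_j})$ gives, for the generic $g \in C_n(K_{x_j, r_j})$, an open ball $V \subseteq \mathbb{R}^n$ on which $\dim_H g^{-1}(y) \geq d - n$. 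Pulling back through the restriction map $R_j \colon C_n(K)\to C_n(K_{x_j, r_j})$ via Lemma \ref{l:cm} and intersecting over $j$, the generic $f \in C_n(K)$ enjoys a localised form of the Main Theorem: for every $j$ there is an open ball $V_j(f) \subseteq f(K_{x_j, r_j}) \subseteq f(W_j)$ such that $\dim_H (f|_{K_{x_j, r_j}})^{-1}(y) \geq d-n$ for all $y \in V_j(f)$.

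The critical remaining step is to use this localised property to deduce $\dim_H f^{-1}(y_0) \geq d - n$ for every specific $y_0 \in \inter f(K)$, not merely for $y_0$ in a dense or generic subset. I would adapt the nested Cantor-type construction of \cite{BBE2} to the $n$-dimensional setting: along a rooted tree of finite words $\sigma$, one selects disjoint bi-Lipschitz copies $K_{x_\sigma, r_\sigma}$ of $K$ with geometrically decreasing scales $r_\sigma$, nested inside the previous stage and with each centre lying close to an already-accumulated point of $f^{-1}(y_0)$. The localised property provides, inside each such copy, values $y$ arbitrarily close to $y_0$ whose fibres have $\mathcal{H}^{d-n+\varepsilon}_\infty$-mass bounded below; a coherent Tietze-type perturbation argument run across all branches then forces the accumulation points of the scheme to lie exactly over $y_0$, producing a compact set $E\subseteq f^{-1}(y_0)$ with $\dim_H E \geq d - n$.

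The main obstacle is precisely this last step: converting ``fibres over values arbitrarily close to $y_0$ are large'' into ``the fibre over $y_0$ itself is large''. Hausdorff dimension is not in general lower semicontinuous along Hausdorff-convergent sequences of fibres, so a soft limit argument fails; one really does need the nested construction to exhibit an honest subset of $f^{-1}(y_0)$ with controlled Hausdorff content. This technical core is exactly what \cite{BBE2} achieves in the one-dimensional case, and once the localised version of the Main Theorem is in hand, the construction goes through \emph{mutatis mutandis}, with Corollary \ref{c:bi} and Theorem \ref{t:main} substituting for their one-dimensional counterparts.
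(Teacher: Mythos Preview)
Your localisation step is sound, but the ``critical remaining step'' you flag is a genuine gap that your sketch does not close. Once $f$ is fixed (as it must be, since you have already passed to a co-meager set), the localised property only hands you large fibres over balls $V_j(f)$ whose location you do not control; nothing forces $y_0 \in V_j(f)$ for any $j$. A ``Tietze-type perturbation'' changes $f$ and therefore cannot manufacture a subset of $f^{-1}(y_0)$, and a nested scheme built from fibres over values merely \emph{near} $y_0$ produces at best Hausdorff convergence of fibres, which---as you yourself observe---says nothing about $\dim_H f^{-1}(y_0)$. Your appeal to \cite{BBE2} is misdirected here: the argument for the analogous one-dimensional result (\cite[Thm.~3.6]{BBE2}, which the paper follows) is not a Cantor construction inside a fibre of a fixed $f$.

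The paper's route is a Baire-category transfer rather than a pointwise construction. From Theorem~\ref{t:main}(ii) and a Baire argument one extracts a \emph{second-category} set $\mathcal{H}_m \subseteq C_n(K)$ of functions with image in a fixed ball $B(y_0,r_2)$ and large fibres over the \emph{fixed} smaller ball $B(y_0,r_1)$. Self-similarity is then used not to localise the Main Theorem but to \emph{transport} $\mathcal{H}_m$: for $f_0$ in a suitable dense set $\mathcal{D}_m$ (Lemma~\ref{l:D}, the genuinely new $n$-dimensional ingredient replacing \cite[Lemma~2.12]{BBE2}), one covers $K$ by balls $B(x_i,\delta)$ of small $f_0$-oscillation, chooses bi-Lipschitz copies $K_i \subseteq B(x_i,\delta')$ with $\phi_i\colon K_i\to K$, and composes with affine maps $\psi_i$ carrying $B(y_0,r_1)$ onto $B(f_0(x_i),\omega)$. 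The set $\mathcal{F}=\bigcap_i\{g\in B(f_0,\varepsilon): g|_{K_i}\in \psi_i\circ \mathcal{H}_m\circ \phi_i\}$ is then of second category in $B(f_0,\varepsilon)$, and every $g\in \mathcal{F}$ has large fibres over each $B(f_0(x_i),\omega)$, hence over all of $f_0(K)$; the defining property of $\mathcal{D}_m$ guarantees $g(K)\setminus B(\partial g(K),1/m)\subseteq f_0(K)$. The essential difference from your outline is that the affine maps $\psi_i$ let one \emph{place} the region of large fibres exactly where it is needed, instead of accepting whatever ball the Main Theorem happens to supply.
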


Before proving Theorem \ref{t:Kg} we need a definition and a lemma.
Basically we follow the proof of \cite[Thm.~3.6.]{BBE2}, but Lemma \ref{l:D} is not similar to \cite[Lemma~2.12.]{BBE2}
and their applications are also different.

\begin{definition} \label{d:D} Let $K$ be a compact metric space and $n\in \mathbb{N}^+$. For all $m\in \mathbb{N}^+$ consider
\begin{align*} \mathcal{D}_m=\{ &f\in C_n(K): \exists \varepsilon>0 \textrm{ such that for all } g\in B(f,\varepsilon) \textrm{ and} \\
&\textrm{for all } y\in g(K)\setminus B(\partial g(K),1/m) \textrm{ we have } y\in f(K)\}.
\end{align*}
If $f\in \mathcal{D}_m$ then one can fix a witness $\varepsilon(f,m)>0$ corresponding to the definition.
\end{definition}

\begin{lemma} \label{l:D} Let $K$ be a compact metric space such that $B(x,r)$ is uncountable
for all $x\in K$ and $r>0$. If $m,n\in \mathbb{N}^+$ then $\mathcal{D}_m=\mathcal{D}_m(K,n)$ is dense in $C_n(K)$.
\end{lemma}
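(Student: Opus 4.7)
The plan is to combine a geometric reformulation of the defining condition with an explicit construction based on Cantor-set surjections, which is made possible by the uncountable-balls hypothesis.

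First I would observe that for any $g\in C_n(K)$ and $y\in\mathbb{R}^n$, the condition $y\in g(K)\setminus B(\partial g(K),1/m)$ is equivalent to $B(y,1/m)\subseteq g(K)$. Indeed the ball $B(y,1/m)$ is connected, contains $y\in g(K)$, and is disjoint from $\partial g(K)$, so it sits in $\inter g(K)\subseteq g(K)$. Hence $\mathcal{D}_m$ consists of those $f$ for which there is $\varepsilon>0$ such that every $g\in B(f,\varepsilon)$ satisfies: whenever $B(y,1/m)\subseteq g(K)$, then $y\in f(K)$.

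Given $f_0\in C_n(K)$ and $\delta>0$, I would build the approximant $f$ as follows. Set $\eta:=\delta/20$, arranged so that $\eta<1/(10m)$. By uniform continuity, cover $K$ by finitely many open sets $U_i$ with $\diam f_0(U_i)<\eta$ and choose $y_i\in f_0(U_i)$ forming an $\eta$-net in $f_0(K)$. The hypothesis on $K$ guarantees that each $U_i$ contains a Cantor set, and by shrinking we may take pairwise disjoint Cantor sets $C_i\subseteq U_i$. Using that a Cantor set admits a continuous surjection onto any non-empty compact metric space, pick continuous surjections $\psi_i\colon C_i\to\overline{B(y_i,2\eta)}$. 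Putting them together defines $f$ on the closed set $\bigcup_i C_i$, where $|f-f_0|\leq 3\eta$. A coordinate-wise Tietze extension of the difference $f-f_0$ with uniform bound $3\eta$ yields $f\in C_n(K)$ satisfying $\|f-f_0\|_\infty\leq 3\eta<\delta$ and, by the net property, $B(f_0(K),\eta)\subseteq f(K)\subseteq B(f_0(K),3\eta)$.

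To verify $f\in\mathcal{D}_m$ with witness $\varepsilon:=\eta$, I would take any $g\in B(f,\eta)$ and any $y\in\mathbb{R}^n$ with $B(y,1/m)\subseteq g(K)$. Then $B(y,1/m)\subseteq B(f(K),\eta)\subseteq B(f_0(K),4\eta)$, so the $1/m$-ball around $y$ sits inside a $4\eta$-tubular neighbourhood of $f_0(K)$. Since $4\eta<1/m$, a geometric analysis should force $y$ to lie within distance $\eta$ of $f_0(K)$, and hence in $B(f_0(K),\eta)\subseteq f(K)$.

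The hard part is precisely this last geometric step. For a general compact set $A\subseteq\mathbb{R}^n$ the implication ``$B(y,1/m)\subseteq B(A,\varepsilon')$ with $\varepsilon'<1/m$ forces $y$ close to $A$'' can fail: a pac-man-type $A$ (a disk with a thin slit) admits $1/m$-balls in a thin thickening of $A$ whose centres lie far inside the slit. The resolution must exploit that the constructed $f(K)$ deliberately contains an entire $\eta$-neighbourhood of $f_0(K)$, so $f(K)$ has no thin pockets at scale $\eta$; turning this heuristic into a proof—likely by iteratively enlarging the family of Cantor-set surjections to fill in any residual pockets of $f(K)$ at scale below $1/m$—is the core technical challenge.
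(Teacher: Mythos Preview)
You have correctly located the crux of the argument and, crucially, you have also correctly diagnosed that your own construction does not close the gap. The step ``$B(y,1/m)\subseteq B(f_0(K),4\eta)$ with $4\eta<1/m$ forces $\dist(y,f_0(K))\le\eta$'' is simply false for general compact $f_0(K)$, and your pac-man example is exactly the obstruction. Your proposed fix---iteratively adding Cantor surjections to ``fill in pockets''---does not lead anywhere concrete: the pockets are features of $f_0(K)$, not of your construction, and you cannot fill them without moving $f$ far from $f_0$. So as written the proposal is a sketch with an acknowledged hole rather than a proof.

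The paper resolves this by abandoning the attempt to make $f(K)$ look like a thickening of $f_0(K)$. Instead it arranges that $f(K)$ is a \emph{finite union of axis-parallel $n$-cubes} $Q_i=Q(f_0(x_i),d_i)$, with the edge lengths $d_i$ perturbed so that all $2nk$ face hyperplanes are distinct; let $\theta>0$ be the minimal gap between parallel ones. The Cantor surjections and Tietze extension are used exactly as you suggest, but with target $Q_i$ rather than a ball. The point of the cube structure is that the complement of $\bigcup_i Q_i$ is well behaved in each coordinate separately: if $y_0\notin f(K)$ then, moving one coordinate at a time by at most $2\varepsilon\le\theta/2$, one reaches a point $y_n$ that is at distance $\ge 2\varepsilon$ from \emph{every} face hyperplane orthogonal to each axis, hence $\dist(y_n,f(K))\ge 2\varepsilon$. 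But $|y_n-y_0|\le 2\varepsilon\sqrt{n}\le 1/m$, so $y_n\in B(y_0,1/m)\subseteq g(K)\subseteq B(f(K),\varepsilon)$, a contradiction. This coordinate-by-coordinate escape is precisely what fails for a union of round balls or for a tubular neighbourhood of an arbitrary compact set; the axis-parallel cube geometry is the missing idea in your approach.
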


\begin{proof} Let $f_0\in C_n(K)$ and $r>0$ be given, we need to show that $\mathcal{D}_m\cap B(f_0,3r)\neq \emptyset$.
Since $K$ is compact and $f_{0}$ is uniformly continuous, there
are finitely many distinct $x_{1},...,x_{k}\in K$ and $\delta>0$ such that
\begin{equation} \label{eq:K=}  K=\bigcup_{i=1}^{k}B(x_{i},\delta)
\end{equation}
and for all $i\in \{1,\dots,k\}$
\begin{equation} \label{eq:delta2}
f_0(B(x_i,\delta))\subseteq B(f_0(x_i),r/n).
\end{equation}
Choose $0<\delta'<\delta$
such that the balls $B(x_{i},\delta')$ are disjoint. As the balls $B(x_i,\delta'/2)$ are uncountable, for all $i\in \{1,\dots,k\}$ there are sets
$C_i\subseteq B(x_i,\delta'/2)$ homeomorphic to the triadic Cantor set, see \cite[6.5. Corollary]{Ke}.

Let $e_1=(1,0,\dots,0),\dots,e_n=(0,\dots,0,1)$ be the standard basis of $\mathbb{R}^n$. For $y\in \mathbb{R}^n$ and $d>0$ let us denote by $Q(y,d)$
the $n$-cube with center $y$ and edge length $d$ homothetic to $[0,1]^n$. For all $i\in \{1,\dots,k\}$ choose
$d_i\in [2r/n,3r/n]$ such that the $2nk$ many hyperplanes determined by the faces of the cubes $Q_i=Q(f_0(x_i),d_i)$ are distinct.
For all $j\in \{1,\dots,n\}$ let us denote by $\mathcal{S}_j$ the collection of those hyperplanes according to these cubes that are orthogonal to $e_j$.
Set
$$\theta=\min\left\{\dist(S,S'): S,S'\in \mathcal{S}_j,~S\neq S',~1\leq j\leq n\right\}>0.$$

Now we construct $f\in B(f_0,3r)$ such that $f(B(x_i,\delta))=Q_i$ for all $i\in \{1,\dots,k\}$.
As $C_i$ are homeomorphic to the triadic Cantor set, by \cite[4.18.~Thm.]{Ke} there are continuous onto maps $g_i\colon C_i\to Q_i$. Then
$d_i\geq 2r/n$ and \eqref{eq:delta2} imply $f_0(B(x_i,\delta))\subseteq Q_i$, so applying the Tietze
Extension Theorem for the coordinate functions yields that for all $i\in \{1,\dots,k\}$ there are functions $\widehat{g}_i\colon B(x_i,\delta)\to Q_i$
such that $\widehat{g}_i=g_i$ on $C_i$ and $\widehat{g}_i=f_0$ on $B(x_i,\delta)\setminus U(x_i,\delta')$. Let $f(x)=\widehat{g}_i(x)$
for all $x\in B(x_i,\delta)$ and $i\in \{1,\dots,k\}$, then \eqref{eq:K=} implies that $f$ is defined for all $x\in K$. The construction easily
yields that $f\in C_n(K)$ is well-defined and $f(B(x_i,\delta))=Q_i$ for all $i\in \{1,\dots,k\}$, so $f(K)=\bigcup_{i=1}^{k} Q_i$.
Since $f_0(B(x_i,\delta))\cup f(B(x_i,\delta)) \subseteq Q_i$ and $\diam Q_i=\sqrt{n}d_i\leq \sqrt{n}3r/n \leq 3r$,
we obtain that $f\in B(f_0,3r)$.

Finally, we prove that $\varepsilon=\min\{\theta/4,1/(2mn)\}>0$ witnesses $f\in \mathcal{D}_m$. Let $g\in B(f,\varepsilon)$ and
$y_0\in g(K)\setminus B\left(\partial g(K),1/m\right)$, and assume to the contrary that $y_0\notin f(K)$.
We construct points $y_1,\dots,y_n\notin f(K)$ near $y_0$ such that some properties of $y_n$ lead to a contradiction.
Assume by induction that
$y_{j-1}\notin f(K)$ is already defined for some $j\in \{1,\dots,n\}$. The definition of $\theta$ with $2\varepsilon\leq \theta/2$ and $y_{j-1}\notin f(K)=\bigcup_{i=1}^{k} Q_i$ imply that there exists $c_j\in [-2\varepsilon,2\varepsilon]$ such that $y_{j-1}+c_je_j\notin f(K)$ and
\begin{equation} \label{eq:mindist} \min_{S\in \mathcal{S}_j} \dist(\{y_{j-1}+c_je_j\},S) \geq 2\varepsilon.
\end{equation}
Let $y_j=y_{j-1}+c_j e_j$, then $y_1,\dots,y_n$ are defined. The construction yields that $y_n-y_j$ are parallel to $e_j$, so
$\dist(\{y_n\},S)=\dist(\{y_j\},S)$ for all $j\in \{1,\dots,n\}$ and $S\in \mathcal{S}_j$. Therefore
$y_n\notin f(K)=\bigcup_{i=1}^{k} Q_i$ and \eqref{eq:mindist} imply
\begin{align} \label{eq:ydist}
\begin{split}
\dist(\{y_n\},f(K))&\geq \min_{1\leq j\leq n} \min_{S\in \mathcal{S}_j} \dist(\{y_n\},S) \\
&=\min_{1\leq j\leq n} \min_{S\in \mathcal{S}_j} \dist(\{y_j\},S)\geq 2\varepsilon.
\end{split}
\end{align}
Then $y_0\in g(K)\setminus B\left(\partial g(K),1/m\right)$
implies that $B(y_0,1/m)\subseteq g(K)$, and
$$|y_n-y_0|=\sqrt{\sum_{j=1}^{n}c_{j}^2}\leq 2\varepsilon \sqrt{n}\leq 1/m.$$
Hence $y_n\in g(K)$. Choose $x\in K$ such that $g(x)=y_n$, then $g\in B(f,\varepsilon)$ yields
$|f(x)-y_n|=|f(x)-g(x)|\leq \varepsilon$, thus $\dist(\{y_n\},f(K))\leq \varepsilon$, but this contradicts \eqref{eq:ydist}.
Therefore $\mathcal{D}_m\cap B(f_0,3r)\neq \emptyset$, and the proof is complete.
\end{proof}

\begin{proof}[Proof of Theorem \ref{t:Kg}]
The Main Theorem yields $\dim_{H} f^{-1}(y)\leq \dim_{t^nH} K-n$ for the generic $f\in C_n(K)$ for all $y\in \mathbb{R}^n$, thus we only need to verify the
opposite inequality.

Fact \ref{f:ttt} implies $\dim_{t^nH}K\geq \dim_t K\geq n$, therefore we can choose a sequence
$n-1<d_m\nearrow \dim_{t^nH}K$. Let us fix
$m\in \mathbb{N}^{+}$. The Main Theorem implies that for the generic $f\in
C_n(K)$ there exists a non-empty open set $U_{f,d_m}\subseteq \mathbb{R}^n$  such
that $\dim_{H} f^{-1}(y)\geq d_m-n$ for all $y\in U_{f,d_m}$.

By the Baire Category Theorem there are $0<r_1<r_2$ and $y_0\in \mathbb{R}^n$ such that
\begin{align*} \mathcal{H}_m=\{ &f\in C_n(K): f(K)\subseteq B(y_0,r_2) \textrm{ and we have} \\
&\dim_{H} f^{-1}(y)\geq d_m-n \textrm{ for all } y\in B(y_0,r_1)\}
\end{align*}
is of second category. Note that $d_m>n-1$ implies that for every
$f\in \mathcal{H}_m$ we have $B(y_0,r_1)\subseteq f(K)$. Let us define
\begin{align*} \mathcal{G}_{m} = \{&f\in C_n(K): \dim_{H} f^{-1}(y)\geq d_m-n \\
&\textrm{for all } y\in f(K)\setminus B(\partial f(K),1/m)\}.
\end{align*}
Following the proof of \cite[Lemma 3.7.]{BBE2} we obtain that $\mathcal{H}_m$ and $\mathcal{G}_m$ have
the Baire property.

It is sufficient to verify that $\mathcal{G}_m$ is co-meager, since by
taking the intersection of the sets $\mathcal{G}_m$ for all $m\in \mathbb{N}^{+}$
we obtain the desired co-meager set in $C_n(K)$. In order to prove
this we show that $\mathcal{G}_m$ contains `certain copies' of $\mathcal{H}_m$.
Since $\mathcal{G}_m$ has the Baire
property, it is enough to prove that $\mathcal{G}_m$ is of second category
in every non-empty open subset of $C_n(K)$. As $\dim_t K\geq n$, we obtain that $K$ is uncountable,
and the weak self-similarity of $K$ yields that $B(x,r)$ is uncountable for all $x\in K$ and $r>0$.
Hence we can apply Lemma \ref{l:D}. Let us fix an arbitrary
$f_0\in \mathcal{D}_m$ and a witness $\varepsilon=\varepsilon(f_0,m)>0$ corresponding to Definition \ref{d:D}.
As $\mathcal{D}_m$ is dense in $C_n(K)$ by Lemma \ref{l:D}, it is enough to show
that $\mathcal{G}_m \cap B(f_{0},\varepsilon)$ is of second category.

Since $K$ is compact and $f_0$ is uniformly continuous, there are finitely many distinct $x_{1},...,x_{k}\in K$ and
$\delta>0$ such that
\begin{equation}\label{*cover*}
K=\bigcup_{i=1}^{k}B(x_{i},\delta)
\end{equation}
and for each $i\in\{1,\dots,k\}$ the oscillation of $f_{0}$ on
$B(x_{i},\delta)$ is less than
\begin{equation}\label{*a4*}
\omega= \frac{\varepsilon r_1}{2 r_2}< \frac{\varepsilon}{2}.
\end{equation}

Choose $0<\delta'<\delta$ such that the balls $B(x_{i},\delta')$ are disjoint. Using the
weak self-similarity property we can choose for every
$i\in\{1,\dots,k\}$ a set $K_{i} \subseteq B(x_{i},\delta')$ and a
bi-Lipschitz map $\phi_{i}\colon K_i \to K$. Let us fix $i\in
\{1,\dots,k\}$. We define the affine function $\psi_{i}\colon
\mathbb{R}^n\to\mathbb{R}^n$ such that
\begin{equation} \label{psii} \psi_{i}
\left(B(y_0,r_1)\right)=B(f_0(x_i),\omega).
 \end{equation}
Let $G_{i}\colon C_n(K)\rightarrow
C_n(K_{i})$ defined by $G_{i}(f)=\psi_{i}\circ f\circ \phi_{i}$. The maps
$\phi_i\colon K_i \to K$ and $\psi_i \colon \mathbb{R}^n\to \mathbb{R}^n$ are
homeomorphisms, hence $G_i$ is a homeomorphism, too. Therefore, since $\mathcal{H}_{m}$ is of second category in $C_n(K)$, we
obtain that
$${\widehat{{\mathcal F}}_{i}}=\{\psi_{i}\circ f\circ \phi_{i}: f\in \mathcal{H}_m\}=G_{i}(\mathcal{H}_m)$$
is of second category in $C_n(K_{i})$.

Now we prove that ${\widehat{{\mathcal
F}}_{i}}\subseteq B\left(f_{0}|_{K_i},\varepsilon \right)$. Let $f\in \mathcal{H}_m$, then the form of $\psi_i$,
\eqref{psii} and \eqref{*a4*} imply
\begin{align*}
\diam (\psi_{i}\circ f\circ \phi_{i})(K_{i})&=\diam \psi_i(f(K)) \leq
\diam \psi_{i}\left(B(y_0,r_2)\right) \\
&= \frac{r_2}{r_1} \diam \psi_{i}\left(B(y_0,r_1)
\right)=\frac{r_2}{r_1}2\omega=\varepsilon.
\end{align*}
Then $f_0(K_i)\subseteq
B(f_0(x_i),\omega) \subseteq (\psi_{i}\circ f\circ \phi_{i})(K_i)$, so $\psi_{i}\circ f\circ \phi_{i} \in B(f_0|_{K_i},\varepsilon)$. Set
$$\mathcal{F}_{i}=\big\{ f\in B(f_{0},\varepsilon): f|_{K_{i}}\in
{\widehat{{\mathcal F}}_{i}}\big\} \quad \textrm{and} \quad \mathcal{F}=\bigcap_{i=1}^{k}\mathcal{F}_{i}.$$
Clearly $\mathcal{F}\subseteq B(f_{0},\varepsilon)$, and repeating the proof of
\cite[Lemma 3.8.]{BBE2} verbatim yields that
$\mathcal{F}$ is of second category in $B(f_{0},\varepsilon)$.

Therefore it is enough to prove
$\mathcal{F}\subseteq \mathcal{G}_m$, which implies that
$\mathcal{G}_m$ is of second category in $B(f_0,\varepsilon)$. Assume that
$g\in \mathcal{F}$ and $y\in g(K)\setminus B(\partial g(K),1/m)$.
The definition of $\varepsilon=\varepsilon(f_0,m)$ and $g\in B(f_0,\varepsilon)$ yield
$y\in f_{0}(K)$. Hence the definition of $\omega$ and
\eqref{*cover*} imply that there is an $i\in \{1,\dots,k\}$ such
that $y \in B(f_{0}(x_{i}),\omega)$. The
definition of $\mathcal{F}$ yields that there exists an $f\in \mathcal{H}_m$ such
that $g|_{K_{i}}=\psi_{i}\circ f \circ \phi_{i}$.
Then \eqref{psii} implies $\psi_{i}^{-1}(y)\in B(y_0,r_1)$, and
$f\in \mathcal{H}_m$ yields
$\dim_{H}f^{-1}\left(\psi_{i}^{-1}(y)\right)\geq d_m-n$. By  the
bi-Lipschitz property of $\phi _{i}$ we infer
\begin{align*}
\dim_{H} g^{-1}(y)&\geq \dim_{H}(g|_{K_i})^{-1}(y)\\
&=\dim_{H} \phi _{i}^{-1} \left(f^{-1} \left(\psi ^{-1} _{i}
(y)\right)\right) \\
&=\dim_{H} f^{-1} \left(\psi^{-1} _{i} (y)\right)\\
&\geq d_m-n.
\end{align*}
Therefore $g\in \mathcal{G}_m$, so $\mathcal{F}\subseteq \mathcal{G}_m$.
This completes the proof.
\end{proof}

We can analogously define inductive dimensions by replacing Hausdorff dimension with packing, or lower box, or upper box dimension, respectively.
However, one can show that these definitions and some natural modifications of them do not satisfy the analogous version of the Main Theorem. The reason why these concepts behave differently is that box dimensions are not even countable stabile, and packing dimension does not allow to take $G_{\delta}$ hulls: It is easy to see that every $G_{\delta}$ hull of $\mathbb{Q}$ has packing dimension $1$. Therefore the following question is quite natural, the author cannot answer it even in the special case $n=1$. For other problems concerning the topological Hausdorff dimension see \cite{BBE}.

\begin{question}
What is the right notion to describe the packing, or lower box, or upper box dimension of the fibers of the generic continuous function $f\in C_n(K)$?
\end{question}

\subsection*{Acknowledgement} The author is indebted to U.~B.~Darji, M.~Elekes and the anonymous referees for some helpful suggestions.

\end{document}